\documentclass[11pt,english]{article}
\usepackage{amsmath,amsthm,url,amsfonts,amssymb,amscd}
\usepackage{enumerate}
\usepackage{hyperref}
\usepackage[latin1]{inputenc}

\headsep=-1truecm \oddsidemargin=10.5pt \evensidemargin=9pt
\textwidth=15.5truecm \textheight=22.5truecm
\begin{document}
\theoremstyle{plain}
\newtheorem{theo}{Theorem}[section]
\newtheorem{thm}[theo]{Theorem}
\newtheorem{prop}[theo]{Proposition}
\newtheorem{coro}[theo]{Corollary}
\newtheorem{conj}{Conjecture}
\newtheorem{propr}{Property}[subsection]
\newtheorem{lemm}[propr]{Lemma}
\newtheorem{lem}[propr]{Lemma}
\newtheorem{claim}[propr]{Claim}
\newtheorem{proprf}[propr]{Fundamental property}
\theoremstyle{definition}
\newtheorem{exem}[propr]{Example}
\newtheorem{exems}[propr]{Examples}
\newtheorem{rema}[propr]{Remark}
\newtheorem{defi}[propr]{Definition}
\newtheorem{ques}[propr]{Question}
\newtheorem{Ques}[propr]{Question}
\setcounter{tocdepth}{1}
\title{Structural stability of attractor-repellor endomorphisms with singularities}
\date {}
\author{Pierre Berger}
\maketitle
\begin{abstract}
We prove a theorem on structural stability of smooth attractor-repellor endomorphisms of compact manifolds, with singularities. 
By attractor-repellor, we mean that the non-wandering set of the dynamics $f$ is the disjoint union of a repulsive compact subset with a hyperbolic attractor on which $f$ acts bijectively.
The statement of this result is both infinitesimal and dynamical. Up to our knowledge, this is the first in this hybrid direction.
Our results generalize also a Mather's theorem in singularity theory which states that infinitesimal stability implies structural stability for composed mappings, to the larger category of laminations.
\end{abstract}
\setlength{\parskip}{0pt}
\tableofcontents
\section{Motivations and statement of the main result}
\subsection{Structural stability of Dynamical Systems}
Let $f$ be a \emph{smooth endomorphism} of a compact manifold $M$. This means that $f$ is a $C^\infty$ map from $M$ into itself which is not necessarily bijective and can have \emph{singularities}: its tangent map can be non surjective at some points. For $k\in \mathbb N\cup \{\infty \}$, the map $f$ is \emph{$C^k$-structurally stable}, if for every $f'$ close to $f$ in the $C^k$-topology, there exists a homeomorphism $h$ of $M$ 
such that the following diagram commutes:
\[\begin{array}{rcccl}
& &f' & &\\ 
&M & \rightarrow&M &\\
h&\uparrow& &\uparrow&h\\
&M & \rightarrow&M &\\
& &f & &\\ \end{array}\]
The topology of the space of $C^k$ maps is the usual one for $k<\infty$, and for $k=\infty$ it is the union of the $C^j$ topologies for $j\in \mathbb N$.
The combined work of many authors (Smale \cite{Sm}, Palis \cite{PS}, De Melo \cite{dM}, Robin \cite{Ri}, Robinson \cite{Rs}, Ma\~ne \cite{Mane})
has led to a satisfactory characterization of $C^1$-structurally stable diffeomorphisms. Let us state their characterization:
\begin{thm}
Let $M$ be a compact connected manifold and $f$ a $C^1$ diffeomorphism of $M$. Then the following conditions are equivalent:
\begin{enumerate}
\item 
$f$ is $C^1$-structurally stable,
\item $f$ satisfies Axiom $A$ and the strong transversality condition,
\item $f$ is $C^0$-infinitesimally stable.
\end{enumerate}
\end{thm}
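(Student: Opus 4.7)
The plan is to prove the three equivalences by a cyclic chain $(2)\Rightarrow(3)\Rightarrow(1)\Rightarrow(2)$, which is the historical route and splits the difficulty cleanly between the infinitesimal/analytic side and the perturbative/dynamical side. I would first fix notation: write $\Omega(f)$ for the non-wandering set, and assume familiarity with hyperbolic splittings $T_{\Omega}M=E^s\oplus E^u$ and stable/unstable manifolds $W^s(x),W^u(x)$. Infinitesimal stability will mean that the linearised conjugacy equation $v\circ f - Tf\cdot v = X\circ f$ admits a continuous solution $v\in C^0(M,TM)$ for every continuous perturbation field $X$; this is the $C^0$-analogue of Mather's infinitesimal stability for composed mappings.

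For $(2)\Rightarrow(3)$, I would work locally on $\Omega(f)$ first. Under Axiom A, hyperbolicity gives a continuous splitting and the twisted operator $v\mapsto v\circ f - Tf\cdot v$ is a bounded linear operator on the Banach space of continuous sections whose restrictions to $E^s$ and $E^u$ are, respectively, invertible by contraction and by expansion arguments (geometric series). This solves the equation over $\Omega(f)$. Strong transversality then lets one extend the solution along the wandering set: for each $x\notin\Omega(f)$ one writes $x\in W^u(p)\cap\text{(backward orbit of }W^s(q)\text{)}$, pulls the infinitesimal conjugacy along orbits, and the transversality of $W^u(p)$ with $W^s(q)$ at $x$ guarantees that the two prescriptions fit together into a continuous global section. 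This is the infinitesimal counterpart of Palis' $\lambda$-lemma construction.

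For $(3)\Rightarrow(1)$, I would apply a Banach-space implicit function theorem in the spirit of Robbin-Robinson. Consider the map $\Phi\colon\mathrm{Diff}^1(M)\times C^0(M,M)\to C^0(M,M),\; (g,h)\mapsto h\circ f - g\circ h,$ and linearise in $h$ at $(f,\mathrm{id})$; the derivative is precisely the operator appearing in infinitesimal stability. Surjectivity with a continuous right inverse, together with a Lipschitz estimate of the nonlinear remainder, yields a continuous local branch $g\mapsto h_g$ of conjugacies; a standard argument then shows $h_g$ is a homeomorphism for $g$ close to $f$. The only subtle point here is loss of derivatives, which is absorbed by working with $C^0$ sections on the target and exploiting hyperbolicity to gain back the needed regularity on $\Omega(f)$.

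The main obstacle, and the step I would spend the most effort on, is $(1)\Rightarrow(2)$: this is Mañé's theorem. I would follow his strategy, namely prove first that $C^1$-structural stability implies that every periodic point is hyperbolic (using Franks' lemma to perturb derivatives along periodic orbits while preserving the dynamics via the conjugacy), then show $\Omega(f)=\overline{\mathrm{Per}(f)}$ by Pugh's closing lemma combined with the ergodic closing lemma, and finally promote hyperbolicity of $\mathrm{Per}(f)$ to uniform hyperbolicity of $\Omega(f)$ via the dominated splitting arguments that form the core of Mañé's paper. Strong transversality would then follow from a Kupka-Smale type density argument: a non-transverse heteroclinic intersection can be destroyed by an arbitrarily $C^1$-small perturbation that cannot be realised by a topological conjugacy, contradicting structural stability. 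This last direction is where all the deep $C^1$-perturbation technology (closing, connecting, Franks) enters, and where any attempt at a short self-contained proof will fail.
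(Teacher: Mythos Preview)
The paper does not prove this theorem. It is stated in the introduction as background, attributed to ``the combined work of many authors (Smale, Palis, De Melo, Robin, Robinson, Ma\~ne)'' with references \cite{Sm}, \cite{PS}, \cite{dM}, \cite{Ri}, \cite{Rs}, \cite{Mane}, and no argument is given. There is therefore nothing in the paper to compare your proposal against.

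That said, your outline is a reasonable summary of the historical proof, with the standard cycle $(2)\Rightarrow(3)$ (Robbin), $(3)\Rightarrow(1)$ (Robbin--Robinson), $(1)\Rightarrow(2)$ (Ma\~ne). Two cautions if you intend to flesh this out. First, in $(3)\Rightarrow(1)$ the map $\Phi(g,h)=h\circ f-g\circ h$ is \emph{not} Fr\'echet differentiable on $C^0(M,M)$, because composition with a merely $C^1$ map does not act smoothly on $C^0$; this is exactly the obstruction that forced Robbin to assume $C^2$ and that Robinson had to work around for $C^1$. The phrase ``a Lipschitz estimate of the nonlinear remainder'' hides the real difficulty. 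Second, in $(1)\Rightarrow(2)$, Pugh's closing lemma alone does not give $\Omega(f)=\overline{\mathrm{Per}(f)}$ for a structurally stable $f$; the closing lemma produces periodic orbits only after perturbation, and passing this back through the conjugacy requires care. Ma\~ne's route goes through his ergodic closing lemma and a delicate analysis of dominated splittings, and the density-of-periodic-points step is intertwined with, not prior to, the hyperbolicity argument.
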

Let us recall the definitions of statements 2 and 3.
A diffeomorphism $f$ satisfies \emph{Axiom $A$} if its non-wandering set 
 $\Omega$ is hyperbolic and equal to the closure of the set of periodic points.
The diffeomorphism $f$ satisfies moreover the \emph{strong transversality condition} if the stable and unstable manifolds of points of $\Omega$ intersect each other transversally.

The concept of infinitesimal stability will play a crucial role in this work. A $C^{r+1}$ endomorphism is \emph{$C^r$-infinitesimally stable} if the following map is surjective:
\[\sigma\in \chi^r(M) \mapsto \sigma\circ f-Tf\circ \sigma\in \chi^r(f),\]
with $\chi^r(M)$ the space of $C^r$ sections of the tangent bundle $TM$, and with $\chi^r(f)$ the space of $C^r$ sections of the push forward bundle $f^*TM$ whose fiber at $x$ is $T_{f(x)}M$.

 In order to understand infinitesimal stability, let us regard a smooth endomorphism $f$ of the torus $\mathbb T=\mathbb R^n/\mathbb Z^n$. We notice that the following map is Fr\'echet differentiable:
\[\phi\;:\; Diff^\infty(\mathbb T)\times Diff^r(\mathbb T)\rightarrow C^r(\mathbb T,\mathbb T)\]
\[(f', h)\mapsto h\circ f'-f'\circ h,\]
where $Diff^\infty(\mathbb T)$ and $Diff^r(\mathbb T)$ are the spaces of $C^\infty$ and $C^r$ diffeomorphisms of $\mathbb T$ respectively.
 Moreover, its partial derivative at $(f,id)$ with respect to the second variable is 
\[\sigma\in \chi^r(\mathbb T)\mapsto \sigma\circ f-Tf\circ \sigma\in \chi^r(\mathbb T).\]

Consequently the above partial derivative is surjective iff $f$ is $C^r$-infinitesimally stable.

\subsection{Structural stability in Singularity Theory}
Meanwhile, the school initiated by Whitney and Thom was interested in the following problem.
Let $f$ be a smooth map from a manifold $M_1$ into a manifold $M_2$. 
For $k> 0$, the map $f$ is \emph{$C^k$-equivalently stable}\footnote{Usually, in singularity theory, we say $C^k$-structurally stable; however this conflicts the dynamical terminology.} if there exists a neighborhood $U$ of $f$ in the $C^\infty$ topology such that for any $f'\in U$, there exist $C^k$ automorphisms $h_1$ and $h_2$ of $M_1$ and $M_2$ respectively such that the following diagram commutes:
\[\begin{array}{rcccl}
& &f' & &\\ 
&M_1 & \rightarrow&M_2 &\\
h_1&\uparrow& &\uparrow&h_2\\
&M_1 & \rightarrow&M_2 &\\
& &f & &\\ \end{array}.\]
The problem is then to describe the $C^k$-equivalently stable maps which is usually simpler. This program was carried out by Mather who solved many  conjectures of Thom. One of his results is the following:
\begin{thm}[Mather \cite{Ma1}, \cite{Ma2}]
The $C^\infty$-equivalently stable, proper maps are the $C^\infty$-equivalent infinitesimal stable maps.\end{thm}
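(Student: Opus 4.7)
The plan is to prove the equivalence in two directions. The easier direction, equivalent stability implies equivalent infinitesimal stability, proceeds in the same spirit as the dynamical case sketched above: one considers a smooth one-parameter family $f_s$ of proper maps with $f_0 = f$, obtains (after suitable regularity arguments) conjugating diffeomorphisms $h_1^s$ and $h_2^s$ depending smoothly on $s$, and differentiates the identity $h_2^s \circ f = f_s \circ h_1^s$ at $s=0$; this yields that every infinitesimal deformation $\dot f_0$ is a coboundary $\xi_2 \circ f - Tf \circ \xi_1$, which is precisely infinitesimal stability.

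For the converse, the strategy is the Thom--Levine path method in its form refined by Mather. Given $f'$ close to $f$ in $C^\infty$, join them by a smooth path $f_t$, $t \in [0,1]$, of proper maps. One looks for time-dependent vector fields $\xi_1(t)$ on $M_1$ and $\xi_2(t)$ on $M_2$ whose flows $h_1^t$, $h_2^t$ realize the conjugation $f_t \circ h_1^t = h_2^t \circ f$. Differentiating in $t$ reduces the problem to solving, for each $t$, the infinitesimal equation
\[\dot f_t = \xi_2(t)\circ f_t - Tf_t \circ \xi_1(t),\]
which is exactly an equation of the type controlled by infinitesimal stability, now of $f_t$ rather than of $f$.

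The heart of the argument is therefore to prove that $C^\infty$-equivalent infinitesimal stability is an \emph{open} condition on $f$ in the $C^\infty$ topology, so that it persists along the whole path. This is the step where Mather's deep machinery enters: reduction of the surjectivity question to a finitely determined jet-level problem, Nakayama-style arguments in the local ring of smooth function germs, and above all the Malgrange preparation theorem, which furnishes the $C^\infty$ substitute for Weierstrass preparation needed to pass from the formal/jet level back to genuine smooth sections. I would expect this openness step, and in particular the invocation of Malgrange's theorem, to be the principal obstacle and the technical core of the proof.

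Once openness is in hand, one must select the vector fields $\xi_1(t)$, $\xi_2(t)$ depending continuously (or at least measurably) on $t$. This is a parametrized selection problem for a surjective continuous linear operator between Fr\'echet spaces, and can be handled either by an explicit bounded right inverse extracted from the preparation-theorem argument or by an abstract splitting lemma. Finally, one integrates the resulting time-dependent vector fields to obtain the isotopies $h_1^t$ and $h_2^t$; here the properness of $f$ (and hence of all $f_t$) is essential, as it ensures that $\xi_1(t)$ and $\xi_2(t)$ generate complete flows on $M_1$ and $M_2$ for $t \in [0,1]$, giving the desired conjugation at $t=1$.
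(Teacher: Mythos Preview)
The paper does not contain a proof of this theorem: it is stated in the introduction as a known result of Mather, with references \cite{Ma1}, \cite{Ma2}, and is used only as background and motivation. There is therefore nothing in the paper to compare your proposal against directly.

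That said, your outline is a faithful sketch of Mather's original strategy, and the paper does develop closely related machinery in Section~4 for its own generalization (Theorem~\ref{fonda}): an adapted Thom--Levine theorem (Theorem~\ref{thom-lev}), the reduction of the trivialization problem to a linear/algebraic one, and Mather's algebraic theorem on chains of adequate ring homomorphisms. If you wanted to see how the ``openness'' step is actually carried out in this paper's setting, look at Proposition~\ref{local:version} and the surrounding paragraphs: rather than proving that infinitesimal stability is an open condition and then solving the equation for each $f_t$ separately, the paper works with modules of germs parametrized by the deformation itself (the rings $R_i^X$ and modules $M_i^X$, $N_i^X$), and uses adequacy of the ring morphisms $\phi_i^X$ to lift surjectivity from the unperturbed map to the whole germ of deformations at once. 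This replaces your ``openness + parametrized selection'' two-step by a single algebraic argument.

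One genuine gap in your sketch: the ``easier'' direction is not as easy as you suggest. Equivalent stability only gives, for each nearby $f'$, \emph{some} pair of conjugating diffeomorphisms; it does not give a smooth (or even continuous) choice along a one-parameter family, so you cannot simply differentiate $h_2^s\circ f = f_s\circ h_1^s$ at $s=0$. Mather's actual proof of this implication goes through a different route (multi-jet transversality and the characterization of infinitesimally stable maps via their local algebras).
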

Here \emph{$C^\infty$-equivalent infinitesimal stability} means that the following map is onto:
\[(\sigma_1,\sigma_2)\in \chi^\infty(M_1)\times \chi^\infty(M_2)\mapsto \sigma_2\circ f-Tf\circ \sigma_1 \in \chi^\infty(f).\]
\subsection{Statement of the main result}
Concerning the structural stability of endomorphisms, there is not yet a criterion, neither a satisfactory description. For instance, Dufour \cite{Dufour} showed that there is no $C^\infty$-infinitesimal stable endomorphism with a periodic point. On the other hand, 
the map $x\mapsto x^2$ on the one point compactification of $\mathbb R$, is $C^0$-infinitesimally stable but not $C^\infty$-structurally stable.

There are very few old theorems stating sufficient conditions for the structural stability of endomorphisms that are not diffeomorphisms.  Nowadays the subject regains of interest with new examples (\cite{Rov1}, \cite{Rov2}). We recall below all these theorems. 
Our main theorem generalizes all the theorems implying the $C^\infty$-stability of endomorphisms that are not diffeomorphisms.

A first old result was stated by Shub in his PhD thesis \cite{Shubthese}, and requires the following definition:
\begin{defi} Let $f$ be a $C^1$ endomorphism of a Riemannian manifold $(M,g)$, which sends a compact subset $K\subset M$ into itself. The compact subset $K$ is \emph{repulsive} if there exist $\lambda>1$ and $n\ge 1$ such that for every $x\in K$, the tangent map $T_xf^n$ is invertible with contractive inverse.
The map $f$ is \emph{expanding} if $M$ is compact and $K=M$.
\end{defi}
\begin{thm}[Shub 69] The expanding $C^1$-endomorphisms of a compact manifold are $C^1$-structurally stable.\end{thm}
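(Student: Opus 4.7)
Since $T_xf^n$ is invertible at every $x\in M$, each $T_x f$ is itself invertible, so $f$ is a local diffeomorphism of the compact manifold $M$, and in particular a finite covering map. The plan is to lift to the universal covering $\pi\colon \widetilde M \to M$, where the expansion hypothesis yields a genuine expanding diffeomorphism, build the conjugacy by a Banach fixed point argument, and then descend. Equip $\widetilde M$ with the pulled back Riemannian metric and choose a lift $\widetilde f\colon \widetilde M\to \widetilde M$ of $f$; because $\widetilde M$ is simply connected and $\widetilde f$ is a lift of the covering $f\circ \pi$ through the covering $\pi$, the map $\widetilde f$ is a diffeomorphism of $\widetilde M$ satisfying $\|T\widetilde f^n v\|\geq \lambda\|v\|$ for every tangent vector $v$. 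For $f'$ sufficiently $C^1$-close to $f$, using equivariance and compactness of a fundamental domain, one can choose a lift $\widetilde{f'}$ uniformly close to $\widetilde f$ that is again a diffeomorphism of $\widetilde M$ with $\|T\widetilde{f'}^{\,n}v\|\geq \lambda'\|v\|$ for some $\lambda'>1$ slightly less than $\lambda$.

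Let $\mathcal H$ denote the space of continuous $\pi_1(M)$-equivariant maps $h\colon \widetilde M\to \widetilde M$ with $\|h\|:=\sup_{x\in \widetilde M} d(h(x),x)<\infty$ (a condition that is automatic by equivariance and compactness of $M$), endowed with the uniform distance; it is a complete metric space. Define
\[\Phi\colon \mathcal H\to\mathcal H,\qquad \Phi(h)=\widetilde{f'}^{-1}\circ h\circ \widetilde f.\]
A fixed point of $\Phi$ is exactly a continuous equivariant $h$ satisfying $\widetilde{f'}\circ h=h\circ \widetilde f$, which descends to a continuous $h_M\colon M\to M$ with $h_M\circ f=f'\circ h_M$. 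The $n$-th iterate $\Phi^n(h)=\widetilde{f'}^{\,-n}\circ h\circ \widetilde f^{n}$ is a contraction of ratio $\lambda'^{-1}$ for the uniform metric, since postcomposition by the $\lambda'^{-1}$-contraction $\widetilde{f'}^{\,-n}$ multiplies distances by at most $\lambda'^{-1}$ while precomposition by $\widetilde f^{n}$ only reindexes the supremum. Banach's theorem then produces a unique fixed point $h_0$ of $\Phi^n$ in $\mathcal H$, which is therefore the unique fixed point of $\Phi$ as well, and $h_0$ is close to the identity when $f'$ is close to $f$.

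The remaining step, which I expect to be the main obstacle, is to verify that the descended $h_M$ is a homeomorphism. Running the symmetric argument with $(f,f')$ exchanged yields a continuous $h_M'$ with $h_M'\circ f'=f\circ h_M'$. The compositions $h_M'\circ h_M$ and $h_M\circ h_M'$ then lift to elements of $\mathcal H$ which are fixed points of the contraction operators associated respectively to the pairs $(f,f)$ and $(f',f')$; since the identity is itself such a fixed point in both cases, and since both lifted composites lie in the same neighborhood of the identity as $h_0$ does of the identity, uniqueness in Banach's theorem forces $h_M'\circ h_M=\mathrm{id}_M$ and $h_M\circ h_M'=\mathrm{id}_M$. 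Hence $h_M$ is a homeomorphism conjugating $f$ to $f'$, establishing the $C^1$-structural stability of $f$.
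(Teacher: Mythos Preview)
The paper does not actually prove this statement: Shub's theorem is quoted in the historical overview as a classical result from his thesis, with no proof given. The paper's own main theorem recovers only the $C^\infty$ case of Shub's result (take $R=M$, $A=\emptyset$, no singularities), so there is no argument in the paper to compare yours against directly.

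Your proof is essentially Shub's original one via the universal cover and the Banach contraction principle, and it is correct. Two minor points worth tightening. First, when you work in the space $\mathcal H$ of $\pi_1(M)$-equivariant maps, you are implicitly using that the lifts $\widetilde f$ and $\widetilde{f'}$ induce the \emph{same} automorphism of the deck group; this is true once $f'$ is $C^0$-close to $f$ and the lift $\widetilde{f'}$ is chosen compatibly, but it should be said, since otherwise $\Phi$ need not preserve $\mathcal H$. Second, in the last paragraph the clause ``since both lifted composites lie in the same neighborhood of the identity'' is superfluous: Banach's theorem already gives a \emph{global} unique fixed point in $\mathcal H$, and the identity is one such fixed point for the pair $(f,f)$ (resp.\ $(f',f')$), so $h_M'\circ h_M=\mathrm{id}_M$ and $h_M\circ h_M'=\mathrm{id}_M$ follow without any locality consideration.
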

The following result is known since many times:
\begin{thm} Let $f$ be a rational function of the Riemann sphere. If the Julia set of $f$ is repulsive and if the critical points of $f$ are quadratic, with non-preperiodic and disjoint orbits, then $f$ is structurally stable for holomorphic perturbations.\end{thm}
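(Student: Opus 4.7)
The plan is to exploit the splitting of the Riemann sphere $\hat{\mathbb C}$ into the repulsive Julia set $J$ and the Fatou set $F = \hat{\mathbb C}\setminus J$, build a conjugacy on each piece separately, and then glue them together. By the hypothesis that $J$ is repulsive together with the classical Fatou--Julia classification, each critical point of $f$ lies in $F$ and its forward orbit converges to an attracting periodic cycle; the connected components of $F$ are precisely the basins of such cycles.

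First, on the Julia set, the repulsiveness hypothesis means that $f|_J$ is uniformly expanding. The standard shadowing/stability argument for expanding maps (as used for Shub's theorem above), localized to the invariant compactum $J$, produces for every holomorphic $f'$ close to $f$ a unique hyperbolic continuation $J'$ and a homeomorphism $h_J: J \to J'$, close to the identity, with $f' \circ h_J = h_J \circ f$.

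Second, on the Fatou set the conjugacy is built basin by basin. Each attracting cycle of $f$ continues to an attracting cycle of $f'$ with a close multiplier by the implicit function theorem, and Koenigs' linearization theorem yields an analytic conjugacy of $f$ and $f'$ on a neighborhood of each such cycle. Because the critical points of $f$ are quadratic, have pairwise disjoint orbits, and none is preperiodic, the same holds for $f'$ after small perturbation, and one obtains a canonical combinatorial correspondence between the critical orbits of $f$ and those of $f'$. The local Koenigs conjugacy is then extended by pulling back through $f$ and $f'$ on a fundamental domain of each basin; the quadratic critical-point condition ensures that this pullback is well defined across the critical values as a branched covering of degree two.

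The third step is the gluing: one must show that $h_J$ and the basin-by-basin map $h_F$ fit together into a single homeomorphism $h$ of $\hat{\mathbb C}$ satisfying $f' \circ h = h \circ f$. This is the main obstacle, since it requires continuity of $h_F$ up to $J$, and the matching of two constructions of very different natures along the fractal boundary. The cleanest way to bypass this technical point is to invoke quasiconformal deformation theory: using Koenigs coordinates on $F$ (extended to all of $F$ by pullback) and the trivial invariant conformal structure on $J$, one builds an $f$-invariant Beltrami coefficient comparing $f$ with $f'$, and the Measurable Riemann Mapping Theorem directly yields a global quasiconformal conjugacy of $f$ and $f'$. The compatibility of the local pieces is then automatic from the uniqueness of the Beltrami solution, and the disjointness and non-preperiodicity of the critical orbits are precisely what guarantees that the Beltrami coefficient can be defined consistently on the grand orbits of the critical values.
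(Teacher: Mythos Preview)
The paper does not give a proof of this theorem: it is quoted as a classical fact (``known since many times'') merely to motivate the main result, and no argument is supplied. So there is nothing in the paper to compare your proof against.

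Your outline is the standard one for hyperbolic rational maps, and steps one and two are correct and invoke the hypotheses at the right places: expansion on $J$ gives the shadowing conjugacy $h_J$, and on each attracting basin the Koenigs linearization plus pullback through $f$ and $f'$ produces $h_F$, with the quadratic, disjoint, non-preperiodic critical-orbit hypothesis exactly what is needed to define the pullback across critical values and to match the branched-cover combinatorics of $f$ and $f'$.

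The third step, however, is imprecise as written. Applying the Measurable Riemann Mapping Theorem to an $f$-invariant Beltrami coefficient $\mu$ yields a quasiconformal $\Phi$ such that $g:=\Phi\circ f\circ\Phi^{-1}$ is \emph{some} rational map; it does not by itself identify $g$ with the given $f'$, and ``uniqueness of the Beltrami solution'' does not close this gap (uniqueness is only up to post-composition by a M\"obius map, and $\Phi$ need not agree with your local $\psi$ on each Fatou component). A cleaner way to finish, which stays closer to what you already built, is to keep the explicit conjugacy $h_F$ on $F$, observe that its dilatation is uniformly bounded (holomorphic pullback preserves dilatation), and then show directly that $h_F$ extends continuously to $J$ and agrees there with $h_J$: for $z\in F$ close to $x\in J$, the pullback definition of $h_F(z)$ traces a backward orbit shadowing that of $x$, and expansion forces $h_F(z)\to h_J(x)$. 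The resulting global homeomorphism is then quasiconformal by removability of the measure-zero set $J$.
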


In the above example the singularities are not (equivalently) stable for smooth perturbations and so the dynamics is not $C^\infty$-structurally stable.
However, since its critical points are necessarily in the attracting basin of periodic attracting orbits, it is a good example of attractor-reppelor dynamics.  

\begin{defi} Let $f$ be a smooth endomorphism of a compact, non necessarily connected manifold. 
The endomorphism $f$ is \emph{attractor-repellor} if its non-wandering set is the disjoint union of two compact subsets $R$ and $A$ such that: 
\begin{itemize}\item the compact subset $R$ is repulsive, but non necessarily transitive, 
\item the compact subset $A$ is hyperbolic, the restriction of $f$ to $A$ is bijective, and the unstable manifolds of points of $A$ are contained in $A$. However $A$ is not necessarily transitive.\end{itemize}
\end{defi}

From the above discussion we understand that our result on structural stability of endomorphisms needs to mix criteria from dynamical systems and singularity theory. 
\begin{thm}[Main Result]\label{main} Let $f$ be an {attractor-repellor}, smooth endomorphism of a compact, non necessarily connected manifold $M$. If the following conditions are satisfied, then $f$ is $C^\infty$-structurally stable:\begin{itemize}
\item[(i)] the periodic points of $f$ are dense in $A$,
\item[(ii)] the singularities $S$ of $f$ have their orbits that do not intersect the non-wandering set $\Omega$,
\item[(iii)] the restriction of $f$ to $M\setminus \hat \Omega$ is $C^\infty$-infinitesimally stable, with $\hat \Omega:= cl\big(\cup_{n\ge 0} f^{-n}(\Omega)\big)$, 
\item[(iv)] $f$ is transverse to the stable manifold of $A$'s points: for any $y\in A$, for any point $z$ in a local stable manifold $W^s_y$ of $y$, for any $n\ge 0$, and for any $x\in f^{-n}(\{z\})$, we have: 
\[Tf^n(T_xM)+T_zW_y^s=T_zM.\]
\end{itemize} 
\end{thm}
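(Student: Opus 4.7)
My plan is to split $M$ into three regions --- a neighbourhood of the repellor $R$, a neighbourhood of the attractor $A$, and the transient piece in $M\setminus\hat\Omega$ --- construct the conjugacy $h$ between $f$ and a nearby $f'$ piecewise by the natural tool on each region, then glue. By condition (ii), $f$ is a local diffeomorphism on a neighbourhood of $\Omega$, so the singularities play no role in the hyperbolic arguments and live entirely inside $M\setminus\hat\Omega$, where the Mather machinery is available.

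\textbf{The two ingredients.} On $R$, Shub's expanding-endomorphism argument produces a conjugacy $h_R$ on a neighbourhood of $R$ via inverse branches. On $A$, since $f|_A$ is bijective on a hyperbolic set with dense periodic orbits (condition (i)), the classical $\Omega$-stability argument --- shadowing on periodic orbits, then passage to the closure --- yields a conjugacy $h_A$ on $A$, which is extended to a neighbourhood of $A$ in its basin by holonomy along the persistent local stable manifolds. On $M\setminus\hat\Omega$ I would use the Thom--Mather isotopy trick: picking a smooth arc $t\mapsto f_t$ from $f$ to $f'$ in $C^\infty(M,M)$, the construction of an isotopy of conjugating homeomorphisms reduces, via the equation
\[X_t\circ f_t-Tf_t\circ X_t=\dot f_t,\]
to integrating a time-dependent continuous vector field $X_t$ on $M\setminus\hat\Omega$. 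Condition (iii) solves this equation for each $t$; a careful, parameter-dependent choice makes $X_t$ continuous in $t$, and Mather's integration scheme then produces a homeomorphism. This is the \emph{lamination version} of Mather's theorem claimed in the abstract, the novelty being that the source is an open, non-compact set laminated by forward orbits rather than a compact manifold.

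\textbf{Gluing; main obstacle.} The decisive step is matching the Mather homeomorphism on $M\setminus\hat\Omega$ with $h_R$ and $h_A$ along $\hat\Omega$, and in particular extending it continuously across $\hat\Omega$. A priori the vector fields $X_t$ supplied by (iii) may blow up as one approaches the stable set of $A$, so that no such extension exists. Hypothesis (iv) --- transversality of every iterate $f^n$ to the stable lamination of $A$ --- is exactly what is needed here: along each local stable leaf it lets one split $X_t$ into a leafwise piece and a transverse piece, solve the cohomological equation with estimates uniform in the distance to $A$, and verify that the resulting isotopy limits continuously to the dynamically built $h_A$ on $W^s(A)$. The analogous control near $R$ is easier, since $R$ is already expanding. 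I expect this uniform-estimate-and-extension step, together with setting up the correct lamination category in which to carry out Mather's proof, to be the main difficulty and the heart of the paper.
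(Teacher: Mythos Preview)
Your three-region decomposition and the idea of using Mather/Thom--Levine in the transient part and hyperbolic tools near $\Omega$ are correct, and match the paper's skeleton. But two specific choices differ from the paper in ways that matter.

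\medskip
\textbf{What the lamination actually is.} The lamination in the paper is \emph{not} by forward orbits; its leaves are the stable manifolds of points of $A$. Hypothesis (iv) is used first of all to show these stable manifolds assemble into a smooth lamination $\mathcal L^s$ on the whole basin (Proposition~3.1.1 in the paper), and then to show that the manifold-level infinitesimal stability (iii) descends to infinitesimal stability of $f$ as an endomorphism of this lamination: one projects the solving vector field onto $T\mathcal L^s$ along the transversal $Tf^{-i}(TP_1^\perp)$, which exists precisely because of (iv). So (iv) plays a geometric role (building the lamination and making the leafwise projection well defined), not the analytic role of controlling blow-up of $X_t$ near $A$ that you propose.

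\medskip
\textbf{Compact fundamental domain instead of uniform estimates.} The paper does not try to run the Mather isotopy on all of $M\setminus\hat\Omega$ and then extend it continuously to $\hat\Omega$. Instead it applies its lamination Mather theorem only on a carefully chosen \emph{compact} set $K\subset L^s\setminus\bigcup_{n\ge 0}f^{-n}(A)$, a sort of fundamental domain containing the singularities, on which every orbit leaves in bounded time. The conjugacy is then propagated outward by the dynamics: forward by $f'^n\circ h_0\circ f^{-n}$ toward $A$ (where $f$ is a diffeomorphism on a neighbourhood $U$), and backward by inverse branches toward $R$. Continuity at $A$ and $R$ comes from exponential contraction/expansion, not from any estimate on $X_t$. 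A second ingredient you do not mention is the persistence of $\mathcal L^s$ and of $(\hat R,\mathcal R)$ under perturbation (Theorem~2.2), which supplies the embeddings $i_s,i_R$ through which $f'$ is pulled back to an endomorphism $f'_s$ of the \emph{same} lamination; only then can the Mather argument compare $f$ and $f'_s$ leafwise.

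\medskip
\textbf{Where your route is riskier.} Your plan to solve $X_t\circ f_t-Tf_t\circ X_t=\dot f_t$ on the open set $M\setminus\hat\Omega$, integrate, and then argue that the flow extends across $\hat\Omega$ by ``uniform estimates from (iv)'' is the step I would flag. Condition (iii) gives you a preimage $X_t$, but with no control whatsoever on its size near $\partial(M\setminus\hat\Omega)$; completeness of the flow and continuous extension to $\hat\Omega$ are genuine issues, and (iv) does not obviously give quantitative bounds of that kind. The paper sidesteps this entirely by restricting to a compact $K$ and using iteration to reach the rest of $M$. If you want your approach to go through, this is the point that needs a real argument.
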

We recall that the \emph{singularities} of a smooth endomorphism $f$ of $M$ are the points $x$ in $M$ for which the tangent map $T_x f$ is not surjective.
\begin{rema}\label{regu} Actually, we prove that the conjugacy map $h$ between $f$ and its perturbation is a smooth immersion restricted to each stable manifold of $f$ without $\hat A:= \cup_{n\ge 0} f^{-n}(A)$. 
Moreover, we prove that the partial derivatives along the stable manifolds depend continuously on the base point over all $W^s(\Omega)\setminus \hat \Omega$. 
\end{rema}
The following non-trivial remark follows easily from the proof of the main result:
\begin{rema} The hypotheses of this theorem are open: any small smooth perturbation of $f$ also satisfies them.\end{rema} 

The main theorem generalizes a well known result:
\begin{coro} \label{dim1}
 Let $f$ be a smooth endomorphism of the circle $\mathbb S^1$. If $f$ is an attractor-repellor endomorphism such that the critical points of $f$ are quadratic, with disjoint orbits in the basin of the attractor but non-preperiodic, then $f$ is $C^\infty$-structurally stable.\end{coro}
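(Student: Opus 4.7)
The plan is to apply Theorem~\ref{main} to $f$ by verifying its four hypotheses. For (i), the attractor $A$ is hyperbolic with $f|_A$ bijective and with unstable manifolds of its points contained in $A$; in dimension one this forces the unstable direction on $A$ to be trivial, so $A$ is zero-dimensional, and by compactness, hyperbolicity and bijectivity $A$ reduces to a finite union of attracting periodic orbits. Periodic points are then trivially dense in $A$. For (ii), each critical point $c$ lies in the basin of $A$, so no forward iterate can fall into the repelling set $R$, while the non-preperiodicity assumption prevents any iterate from landing on the finite set $A$; hence the orbit of $c$ misses $\Omega=A\cup R$. For (iv), every local stable manifold $W^s_y$ of a point $y\in A$ is an open arc in $\mathbb S^1$, so $T_zW^s_y=T_z\mathbb S^1$ and the transversality $Tf^n(T_xM)+T_zW^s_y=T_zM$ holds vacuously.

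The substantive work is the verification of (iii): that $f$ restricted to the open set $U:=\mathbb S^1\setminus\hat\Omega$ is $C^\infty$-infinitesimally stable. Given $\tau\in\chi^\infty(f|_U)$, one seeks $\sigma\in\chi^\infty(U)$ satisfying $\sigma\circ f-f'\cdot\sigma=\tau$. Away from the critical set $S\subset U$ (which, by (ii), sits entirely inside $U$), this is a first-order linear equation along orbits, solved by the usual push/pull prescriptions
\[\sigma(f(x))=f'(x)\sigma(x)+\tau(x),\qquad \sigma(x)=\frac{\sigma(f(x))-\tau(x)}{f'(x)},\]
starting from a free choice of $\sigma$ on a fundamental domain for the wandering dynamics of $f$ on $U$. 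The local obstruction at a critical point $c\in S$ is extracted from its quadratic normal form: expanding $f(x)-f(c)=\tfrac12 f''(c)(x-c)^2+O((x-c)^3)$ with $f''(c)\neq 0$, a Hadamard-type divisibility argument shows that a smooth solution near $c$ exists if and only if the single pointwise matching condition $\sigma(f(c))=\tau(c)$ is imposed, after which the quotient $(\sigma\circ f-\tau)/f'$ is smooth.

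Because the forward orbits of the critical points are pairwise disjoint and non-preperiodic, the finitely many matching conditions $\sigma(f(c_i))=\tau(c_i)$ land at pairwise distinct wandering points of $U$ and are therefore mutually independent; they can be accommodated by the free choice of $\sigma$ on the fundamental domain, propagated by the push/pull formulas, and glued smoothly across each quadratic fold using the divisibility condition. The main obstacle I expect is ensuring that the resulting $\sigma$ is globally $C^\infty$, and not merely continuous or $C^k$ for some finite $k$, along the critical orbits that accumulate onto the attracting cycles in $A$. This is controlled by combining the explicit quadratic normal form at each fold with the exponential hyperbolic contraction near each attracting periodic point of $A$, which together yield uniform $C^k$ estimates for every $k$ on the iterated cohomological equation. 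Once (iii) is established, Theorem~\ref{main} delivers the $C^\infty$-structural stability of $f$.
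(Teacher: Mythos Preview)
Your verification of (i), (ii) and (iv) matches the paper's implicit treatment (the paper simply says ``only the infinitesimal condition hypothesis is not obvious'').  For (iii), however, your approach differs from the paper's.

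The paper does not attempt a direct global construction of $\sigma$ on $U=\mathbb S^1\setminus\hat\Omega$.  Instead it invokes Lemma~\ref{trivial inf stab}, which reduces the infinitesimal stability of $f|_{M'}$ to a purely \emph{local} statement: the $C^\infty$-equivalent infinitesimal stability of the quadratic fold $x\mapsto x^2$ on $\mathbb R$.  That local statement is then dispatched in two lines by setting $\sigma_2\equiv\xi(0)$ and $\sigma_1(x)=(\sigma_2(x^2)-\xi(x))/(2x)=-\int_0^1\xi'(tx)/2\,dt$, so that $\sigma_2\circ f-Tf\circ\sigma_1=\xi$.  All the global patching (fundamental domains, push/pull along orbits, compatibility at overlaps) is absorbed into the lemma.

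Your route---building $\sigma$ by hand from a fundamental domain and handling each fold by a Hadamard divisibility---is essentially an \emph{ad hoc} proof of Lemma~\ref{trivial inf stab} in dimension one.  It is correct, but note that your worry about $C^\infty$-regularity ``along the critical orbits that accumulate onto the attracting cycles in $A$'' is misplaced: since $A\subset\hat\Omega$, the set $U$ does not contain $A$, and $\chi^\infty(U)$ only demands smoothness on the open set $U$, not across its boundary.  No convergence or uniform-$C^k$ estimate near $A$ is needed; the push/pull formula is locally smooth at every point of $U$, and that is all that is required.  The paper's approach buys modularity and brevity (the local fold computation is five lines); yours buys explicitness and avoids forward reference to the lemma.
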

\begin{rema} Actually by \cite{Van}, the above corollary is maximal in dimension 1: there are no more structurally stable endomorphisms of the circle. Moreover a $C^\infty$-generic map of the circle satisfies this hypothesis. \end{rema}

\begin{proof}[Proof of Corollary \ref{dim1}]
 Only the infinitesimal condition hypothesis is not obvious. By Lemma \ref{trivial inf stab} (see below), we only need to prove the $C^\infty$-equivalent infinitesimal stability of the map $f:\;x\in \mathbb R\mapsto x^2\in \mathbb R$.  Let $\xi\in \chi(f)$. 
Let $\sigma_2$ be the constant function equal to  $\xi(0)$ and:
\[\sigma_1(x):= \frac{\sigma_2(x^2)-\xi(x)}{2x}=-\int_0^1 \frac{\xi'(tx)}{2}dt\]
   We notice that $\sigma_1,\sigma_2\in \chi^\infty (\mathbb S^1)$ satisfy:
  \[ \sigma_2(x^2)-2x\sigma_1(x)=\xi(x), \quad \forall x\in \mathbb S^1.\]
\[\mathrm{i.e.}\; \sigma_2\circ f-Tf\circ \sigma_1=\xi.\]
\end{proof} 
Equivalent infinitesimal stability is a $C^\infty$-generic property for maps from compact manifolds of dimension less than 9 (see \cite{Gol} p163 when the differentiable map sends a manifold into one of different dimension). Conditions on the dimension of the manifold are given in \cite{Nakai} when the singularities overlap.

This provides other applications of the main result.

\begin{exem} Let $R$ be a rational function of the Riemann sphere such that all its critical points belong to the basin of attracting periodic orbits. Then the non-wandering set $\Omega$ of $R$ split into two sets: the union of the basin of a finite number of attracting periodic points, and the Julia set $J$ which is a repulsive compact subset. Thus $R$ is an attractor-repellor endomorphism of the sphere. But condition $(ii)$ is not satisfied since its singularities are not stable.
Nevertheless, for $C^\infty$-generic perturbations $R'$ of $R$ the singularities $\Sigma$ of $R$ are equivalently stable and do not overlap along their orbits ($f^k(\Sigma)\cap \Sigma=\emptyset$, $\forall k>0$). Thus $R'$ satisfies the hypothesis of the main theorem, and hence is structurally stable. To have the shape of the singularities of $R'$ see \cite{arnold}  p. 20. This result was recently shown in \cite{Rov2}.\end{exem}

\begin{exem} When the endomorphism $f$ does not have singularities, hypothesis $(iii)$ is always satisfied, as we will see in Lemma \ref{trivial inf stab}. Consequently, our main result implies the structural stability of the following endomorphism:
\[f:\; \mathbb S^2\times \mathbb S^1\rightarrow \mathbb S^2\times \mathbb S^1\]
\[(z,z')\mapsto \Big( \frac{z+z'}{3},z'^2\Big),\]
where $\mathbb S^2$ is the Riemann sphere and $\mathbb S^1$ is the unit circle of the complex plane. We notice that the non-wandering set is the disjoint union of the Smale solenoid (inside the product of the unit disk with $\mathbb S^1$) with the repulsive circle $\{\infty\}\times \mathbb S^1$. This example is the first structurally stable endomorphism known  which is neither a diffeomorphism nor expanding and was found in \cite{Rov1}. 
\end{exem}

\begin{exem} Let us introduce some singularities in the previous example. Let $\mathbb D$ be the closed unit disk. We notice that $\mathbb D\times \mathbb S^1$ contains the solenoid, is sent into its interior by the dynamics and the restriction of $f$ to $\mathbb D\times \mathbb S^1$ is a diffeomorphism onto its image. Let us parametrize $\bar{\mathbb T}:= \mathbb D\times \mathbb S^1$ in polar coordinates  $\{(r e^{i\theta},z');\; r\in [0,1],\; \mathrm{and} \; z'\in \mathbb S^1\}$. Let $\rho\in C^\infty ([0,1],[0,1])$ be a Morse function equal to the identity  everywhere except a small subset $U$ close to $1$ but with closure disjoint from $\{1\}$. We suppose that $\rho$ has exactly two singularities and that $\rho$ sends $U$ into $U$. We notice that $\rho_1:\; re^{i\theta}\mapsto \rho(r)e^{i\theta}$ has for singularities two folds over two disjoint circles. 

For some perturbation $\rho_2$ of $\rho_1$, the diffeomorphism $\rho_2$ is still equal to the identity on the complement of $U\times \mathbb S^1$, its singularities consist of two folds over two disjoint circles transverse to those of $\rho_1$, and $\rho_2$ sends $U\times \mathbb S^1$ into itself. Let us now regard:

\[ f:\; (z,z')\mapsto \left\{\begin{array}{cc} f(z,z')& \mathrm{for}\; (z,z')\in (\mathbb S^2\times \mathbb S^1\setminus \bar{\mathbb T})\cup f^2(\bar{\mathbb T})\\
   f(\rho_1(z),z')& \mathrm{for}\; (z,z')\in \bar{\mathbb T}\setminus f(\bar{\mathbb T})\\
f^2\circ (\rho_2(\cdot, id) \circ f^{-1}_{|\bar{\mathbb T}}(z,z')  & \mathrm{for}\; (z,z')\in f(\bar{\mathbb T})\setminus f^2(\bar{\mathbb T})\end{array}\right.\]

Let us show how that the main theorem implies that $f'$ is $C^\infty$-structurally stable. First we notice that $f'$ is a smooth endomorphism of $\mathbb S^2\times \mathbb S^1$, and is an attractor-repellor endomorphism: it has the same non-wandering set as $f$ $(i)$. Moreover $f'$ is transverse to the stable manifolds of the solenoid $(iv)$. Its singularities are formed by four folds; only two of them intersect the other ones along their orbits. This intersection occurs only at the first iteration and is transverse. Since the singularity of $\rho$ are close to $\{1\}$, the singularities of $f'$ are disjoint from the solenoid, but included in $\bar{\mathbb T}$. As $f$ embeds $\bar{\mathbb T}$ into itself, the orbits of the singularities do not intersect the non-wandering set $(ii)$. Let us prove Property $(iii)$. For this end we are going to show the $C^\infty$-equivalent infinitesimal stability of the diagram: 
\[ \begin{array}{ccccc}         &f_1       &           &     f_2   &\\
\mathbb R^2&\rightarrow&\mathbb R^2&\rightarrow&\mathbb R^2\\
   (x,y)&\mapsto       &(x^2,y)    &           &\\
        &              &(x,y)      &\mapsto    &\big((x+y)^2,y\big)\end{array}\]   
        The \emph{$C^\infty$-equivalent infinitesimal stability}  of this diagram means that the map $(\sigma_1,\sigma_2,\sigma_3)\in \chi^\infty(\mathbb R^2)^3\mapsto (\sigma_3\circ f_1-Tf_2\circ \sigma_1,\sigma_3\circ f_2-Tf_3\circ \sigma_1)\in \chi^\infty(f_1)\times \chi^\infty(f_2)$ is surjective. Such an infinitesimal stability will be sufficient for our purpose.

The product of above maps with the identity of $\mathbb R$ provides a local model of the singularities of $f$. The equivalent infinitesimal stability of such a product follows easily from the one of the above diagram. Also by using a partition of the unity, we get the infinitesimal stability of $f$ restricted to a neighborhood of the singularities. Then Lemma \ref{trivial inf stab} implies the infinitesimal stability Property $(iii)$.

Let us prove the $C^\infty$-equivalent infinitesimal stability of the above diagram. 
Let $\xi\in \chi^\infty(f_1)$ and $\zeta\in \chi^\infty(f_2)$. We want to construct $\sigma_1, \sigma_2,\sigma_3\in \chi^\infty(\mathbb R^2)$ such that:
\[\sigma_2\circ f_1-Tf_1(\sigma_1)=\xi\quad \mathrm{and}\quad \sigma_3\circ f_2-Tf_2(\sigma_2)=\zeta.\]
By putting an exponent $1$ or $2$ to denote the first or second component respectively, the above system is equivalent to:

\begin{align}
 \nonumber 	\sigma_2^1(x^2,y)-2x \sigma_1^1(x,y)=\xi^1(x,y)\\
 \nonumber 	\sigma_2^2 (x^2,y)-\sigma_1^2(x,y)=\xi^2(x,y)\\
 \nonumber 	\sigma_3^1\big((x+y)^2,y\big)-2(x+y)\big(\sigma_2^1(x,y)+\sigma_2^2(x,y)\big)=\zeta^1(x,y)\\
 \nonumber 	\sigma_3^2\big((x+y)^2,y\big)-\sigma_2^2(x,y)=\zeta^2(x,y)
\end{align}
 Therefore it is sufficient to solve the following system with $X:= x+y$ and $Y:=y$:
\begin{align}
	\sigma_1^1(x,y)=\frac{\sigma_2^1(x^2,y)-\xi^1(x,y)}{2x}\\
	\sigma_2^1(X,Y)+\sigma_2^2(X,Y)=\frac{\sigma_3^1\big(X^2,Y)-\zeta^1(X,Y)}{2X}\\
	\sigma_1^2(x,y)=\sigma_2^2 (x^2,y)-\zeta^2(x^2,y)\\
	\sigma_2^2(X,Y)=\sigma_3^2\big(X^2,Y\big)-\zeta^2(X,Y)
\end{align}
We put $\sigma_3^1(X,Y):=\zeta(0,Y)$ which makes sense to $(2)$ when $X$ approaches $0$ and
\[\sigma_3^2(Y^2,Y):=-\xi^1(Y,Y)+\frac{\zeta^1(0,Y)-\zeta^1(Y,Y)}{2Y}+\zeta^2(Y,Y)\]
 and smoothly extended off the parabola $X=Y^2$.
 
 Thus $\sigma_2^2$ is defined without ambiguity by $(4)$, $\sigma_2^1$ by $(2)$, and $\sigma_1^2$ by $(3)$.
 
 Finally we compute that $\sigma_2^1(0,y)= \xi^1(0,y)$ and so we can well define $\sigma_1^1$ by $(1)$. 
\end{exem}
\begin{exem}
Let $M$ be the $2$-sphere that we identify with the one-point compactification of $\mathbb R^2$. Let $(r,\theta)$ be the polar coordinates of $\mathbb R^2$. 
\[\mathrm{Let}\quad f:\;\hat {\mathbb R}^2 \rightarrow \hat {\mathbb R}^2\]
\[(r,\theta)\mapsto \Big( \frac{r}{2+2 r^2}, \theta\Big).\]
We notice that $f$ is an attractor-repellor endomorphism with $\{0\}$ as attractor and with empty repellor. Also the singularities of $f$ are folds that do not overlap. Consequently the hypotheses of the main theorem are satisfied and so $f$ is structurally stable.
\end{exem}

Here is the lemma that we needed for all our computations.
\begin{lemm}\label{trivial inf stab} Let $f$ be an attractor-repellor endomorphism of a compact manifold $M$. Suppose that $f$ satisfies Properties $(i)$ and $(ii)$ of the main theorem. Let $\Omega$ be the non-wandering set of $f$ and $M':= M\setminus cl(\cup_{n\ge 0} f^{-n}(\Omega))$. Suppose moreover the existence of an open neighborhood $U\subset M'$ of the singularities such that:
\begin{itemize}
\item[$(a)$] for every $x\in U$, if an iterate $f^n(x)$ belongs to $U$, then $(f^k(x))_{k=0}^n$ belongs to $U$,
\item[$(b)$] The restriction of $f^n$ to $U\setminus f^{-1}(U)$ is injective with injective derivative, for every $n\ge 0$,
\item[$(c)$] the map $\sigma\in \chi^\infty(M')\mapsto (\sigma\circ f- Tf\circ \sigma)_{|U}\in   \chi^\infty(f_{|U})$ is surjective,
\end{itemize}
then the restriction of $f$ to $M'$ is $C^\infty$-infinitesimally stable.
\end{lemm}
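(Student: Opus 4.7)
The plan is to use hypothesis $(c)$ to solve the infinitesimal equation locally near the singularities, then extend to a global solution on $M'$ by propagating along forward orbits (exploiting $(a)$ and $(b)$), and finally glue the two constructions with a partition of unity.

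Given $\xi \in \chi^\infty(f_{|M'})$, apply $(c)$ to produce $\sigma_0 \in \chi^\infty(M')$ with $(\sigma_0 \circ f - Tf \circ \sigma_0)_{|U} = \xi_{|U}$. Replacing $\xi$ by $\tilde\xi := \xi - \sigma_0\circ f + Tf\circ \sigma_0$ reduces the problem to finding $\tilde\sigma \in \chi^\infty(M')$ vanishing on $U$ with $\tilde\sigma\circ f - Tf\circ \tilde\sigma = \tilde\xi$; by construction $\tilde\xi_{|U} = 0$. Prescribing $\tilde\sigma_{|U} = 0$, the equation $\tilde\sigma(f(x)) = T_xf \cdot \tilde\sigma(x) + \tilde\xi(x)$ then iteratively determines $\tilde\sigma$ on the ``sheets'' $f^n(W)$ emanating from the exit set $W := U \setminus f^{-1}(U)$, via an explicit formula for $\tilde\sigma(f^n(w))$ involving iterates of $Tf$ applied to $\tilde\xi$ along the orbit of $w \in W$. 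Hypothesis $(a)$ guarantees that the sheets $f^n(W)$ for $n \geq 1$ lie in $M' \setminus U$, and thus do not conflict with the prescription $\tilde\sigma_{|U} = 0$; hypothesis $(b)$ guarantees that each $f^n_{|W}$ is an injective immersion, so the explicit formula defines a smooth section of $TM$ over every $f^n(W)$.

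On the complement $M'\setminus V$ of $V := U \cup \bigcup_{n \geq 1} f^n(W)$, the map $f$ has no singularities (all lie in $U \subset V$) and $M'$ contains no non-wandering points (since $\Omega \subset \hat\Omega$), so the cohomological equation on $M'\setminus V$ is solvable by standard means for proper local diffeomorphisms without recurrence --- for instance, prescribe $\tilde\sigma$ on a fundamental domain transverse to the orbits and propagate using $Tf$ and its inverse. A smooth partition of unity supported in a neighborhood of $\partial V$ glues the two constructions into a global $\tilde\sigma \in \chi^\infty(M')$, and $\sigma := \sigma_0 + \tilde\sigma$ is the desired solution. The principal technical obstacle is proving smoothness of the recursive section where the sheets $f^n(W)$ accumulate on the attractor $A \subset \overline{V}$; this should follow from the uniform immersion in $(b)$ together with the hyperbolic contraction of $f$ along stable directions near $A$ inherited from the attractor--repellor structure, which keeps the derivatives of the iterated sum defining $\tilde\sigma$ uniformly bounded as $n \to \infty$.
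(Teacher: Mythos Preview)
Your overall plan coincides with the paper's: use $(c)$ to solve on $U$, propagate along orbits, build a second solution away from the orbit of the singularities, and glue. The reduction to $\tilde\xi_{|U}=0$ is a pleasant simplification the paper does not make. However, there are two genuine gaps.

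\medskip
\textbf{The partition of unity must be $f$-invariant.} If $\sigma_1,\sigma_2$ each satisfy $\sigma_i\circ f - Tf\circ\sigma_i=\tilde\xi$ on their domains and $\sigma=r\sigma_1+(1-r)\sigma_2$, a direct computation gives
\[
\sigma\circ f - Tf\circ\sigma \;=\; \tilde\xi \;+\; (r\circ f - r)\,Tf(\sigma_1-\sigma_2).
\]
So the glued section solves the equation only if $r\circ f=r$. You invoke an ordinary partition of unity ``supported in a neighborhood of $\partial V$''; this is not enough. The paper spends a paragraph constructing an $f$-invariant pair $(r,1-r)$: it first builds one on a forward-invariant neighborhood $V$ of $A$ (where $f$ is a diffeomorphism onto its image, so the classical construction applies) and then pulls it back by $f^n$ to all of $M'$. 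Without this step the gluing fails.

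\medskip
\textbf{The ``principal technical obstacle'' you name is not an obstacle.} You worry about smoothness where the sheets $f^n(W)$ accumulate on $A$ and propose controlling derivatives via hyperbolic contraction. But $A\subset\Omega\subset\hat\Omega$, and $M'=M\setminus\hat\Omega$, so $A$ is \emph{not} in $M'$. Since every point of $M'$ lies in the basin of $A$, every compact subset of $M'$ meets only finitely many $f^n(W)$; the recursive definition is therefore locally a finite sum and is automatically smooth on $M'$. No estimate on iterates of $Tf$ is needed. The paper simply observes that the sequence $(\sigma_n)_n$ is ``locally eventually constant'' on $U^+$.

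\medskip
One further point of comparison: before gluing, the paper also extends the solution \emph{backward} from $U^+$ to $\hat U=\bigcup_{n\ge0}f^{-n}(U^+)$, using at each step the inverse of $Tf$ restricted to the orthogonal complement of $\ker Tf$. This enlarges the first domain to a fully $f$-invariant set, which is what makes the $f$-invariant partition of unity (subordinate to an $f$-invariant cover) available. In your decomposition $V=U^+$ is only forward-invariant, so even if you realised an invariant cutoff were needed, you would not yet have an invariant open cover on which to build it.
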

This lemma will be shown at this end of in Subsection \ref{final lemma}.

\subsection{Links with structural stability of composed of mappings}
If we mentioned several times that the manifolds are not necessarily connected, it is because in this case, our main result is a complement to a theorem of Mather on structural stability of composed mapping. Moreover, contrarily to what happen for diffeomorphisms or local diffeomorphisms (such as expanding maps), the endomorphisms can send a connected component of the manifold into one of different dimension.
As mentioned by Baas \cite{BAAS}, the problem of composed mapping was first stated by Thom, and have many applications in Biology (see \cite{MR0467804} and \cite{MR0322905}), in the study of network (see Baas \cite{BAAS}) and in the study of the so-called Laudau singularity of Feynman integral (see \cite{MR0214341}).

To state the problem of composed mapping, let us consider a finite oriented graph $G:=(V,A)$ with a manifold $M_i$ associated to each vertex $i\in V$, and with a smooth map $f_{ij}\in C^\infty(M_i,M_j)$ associated to each arrow $[i,j]\in A$ from $i$ to $j$.
\begin{exem}
\[\begin{array}{ccccc}
& & f_{12}\qquad &\\
& M_1 & \rightarrow  M_2&\\
f_{12}&\uparrow&\nearrow f_{32} &\\
& M_3 &&\\
&\circlearrowright& &  \\
& f_{33}&&\end{array}\]
\end{exem}
For $k\ge 0$, such a graph is \emph{$C^k$-equivalently stable} if for every 
$(f'_{ij})_{[i,j]\in A}$ close to $(f_{ij})_{[i,j]\in A}$ in $\prod_{[i,j]\in A}C^\infty(M_i,M_j)$, there exist diffeomorphisms $(h_{i})_i\in \prod_{i\in V} C^k(M_i, M_i)$ s.t. for every $[i,j]\in A$ the following diagram commutes:
\[\begin{array}{rcccl}
& &f'_{ij} & &\\ 
&M_i & \rightarrow&M_j &\\
h_i&\uparrow& &\uparrow&h_j\\
&M_i & \rightarrow&M_j &\\
& &f_{ij} & &\\ \end{array}.\]
The graph $G$ is \emph{convergent} if at most one arrow of $A$ starts from each vertex. The graph is \emph{without cycle} if for any family of arrows $([i_{k} , i_{k+1}])_{k=1}^N\in A^N$, the vertices $i_1$ and $i_{N+1}$ are different.
The following theorem was proved by Mather, and then written by Baas \cite{BAAS}:
\begin{thm}\label{1.7}
Let $G$ be a graph of smooth proper maps, convergent and without cycle. The graph is $C^\infty$-equivalently structurally stable if the following map is surjective:
\[\prod_{i\in V}\chi^\infty(M_i)\rightarrow \prod_{[i, j]\in A}\chi^\infty (f_{ij})\]
\[(\sigma_i)_i\mapsto (Tf_{ij}\circ\sigma_i-\sigma_j\circ f_{ij})_{[ij]}.\]
\end{thm}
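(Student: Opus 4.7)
The plan is to adapt the Thom--Mather homotopy method to the composed-mapping setting, using the convergent-acyclic structure of the graph to organise the construction of the conjugating diffeomorphisms. First, connect $f=(f_{ij})$ to a nearby $f'=(f'_{ij})$ by a smooth segment $F_t=(F_{ij,t})$ in $\prod_{[i,j]\in A}C^\infty(M_i,M_j)$ with $F_0=f$ and $F_1=f'$, and look for a smooth family $(h_{i,t})_{i\in V,\,t\in[0,1]}$ of diffeomorphisms of the $M_i$, with $h_{i,0}=\mathrm{id}$, satisfying $h_{j,t}\circ f_{ij}=F_{ij,t}\circ h_{i,t}$ for every arrow. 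Writing $V_{i,t}:=\partial_t h_{i,t}\circ h_{i,t}^{-1}\in\chi^\infty(M_i)$ and differentiating the conjugacy in $t$ produces the linearised system
\[ V_{j,t}\circ F_{ij,t}-TF_{ij,t}\circ V_{i,t}=\dot F_{ij,t}\circ h_{i,t}^{-1},\qquad [i,j]\in A, \]
which is precisely the linear map of the hypothesis, but with the $f_{ij}$ replaced by their perturbations $F_{ij,t}$. Integrating the time-dependent vector fields $V_{i,t}$ on each $M_i$ from $0$ to $1$---with properness of the $f_{ij}$ providing completeness and the global diffeomorphism property---would then yield the family $(h_{i,1})_i$ realising the conjugacy.

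Two things must then be arranged: (a) the surjectivity must persist at $F_t$ for every $t\in[0,1]$ and for all perturbations in a small $C^\infty$-neighbourhood of $f$, and (b) the solutions $(V_{i,t})_i$ can be chosen smoothly in $t$ and in the perturbation. For (a) I would invoke the openness of infinitesimal stability in Mather's framework, which follows from upper-semicontinuity of the cokernel dimensions of the linearisation combined with the Malgrange preparation theorem and properness of the $f_{ij}$. For (b) the convergent-without-cycle hypothesis is used in an essential way: since each vertex has out-degree at most one and the graph has no directed cycle, every connected component is a finite tree oriented toward a unique sink, and each vertex carries a well-defined height (the length of the longest directed path issuing from it). One can then organise the construction along this height filtration, so that at each stage one is solving a Mather-type division problem for $V_i$ in which the coupling to the previously chosen fields is controlled by the preparation theorem, the combined surjectivity hypothesis supplying exactly the algebraic data required to close the induction without obstruction.

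The main obstacle will be step (b): pointwise surjectivity of a Fr\'echet-linear map is not by itself enough to produce a continuous, let alone smooth, right inverse, and one genuinely needs Mather's preparation-theoretic machinery---the deep content of his single-map stability theorem---to build such a section. The convergent-acyclic structure is precisely what lets the global section be assembled from arrow-by-arrow sections: acyclicity guarantees the induction terminates after finitely many steps, and convergence prevents two incoming equations from constraining the same $V_i$ through $TF_{ij}$ in incompatible ways. I would expect properness of the $f_{ij}$ to intervene repeatedly---to ensure completeness of the flows integrating the $V_{i,t}$, to enable partition-of-unity patching of local sections into global ones, and to globalise the germ-level conclusions of the preparation theorem on the compact manifolds $M_i$.
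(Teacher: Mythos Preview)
Your outline is the Thom--Levine/Mather homotopy method and is correct in spirit, but the organization differs from the paper's and your separation into steps (a) and (b) hides where the real work lies. A small point first: in your linearised equation the right-hand side should simply be $\dot F_{ij,t}$; after you substitute $\partial_t h_{i,t}=V_{i,t}\circ h_{i,t}$ and use the conjugacy relation, the factor $h_{i,t}^{-1}$ cancels and the equation decouples from $h$.

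The paper does not first prove openness of infinitesimal stability and then build a smooth section. Instead it applies the Thom--Levine reduction once, on the product $\coprod_i M_i\times\mathbb R$: a deformation $F$ is trivial if one can solve $\tau_F=TF\circ\xi-\xi\circ F$ for a $\mathcal W^\infty$-small $\xi$ with zero $\mathbb R$-component. The heart of the argument is then purely algebraic. One introduces four layers of rings $R'_i$, $R^t_i$, $\tilde R_i$, $R^X_i$ (smooth functions on the pieces, germs at a fixed $t$, smooth in the $t$-parameter, and germs at $F_0$ in the space $X$ of deformations), together with the pullback morphisms induced by $f$ along the chains of the graph. The crucial input is that these morphisms are \emph{adequate} in Mather's sense, and then Mather's algebraic theorem (a Nakayama-type statement for chains of adequate homomorphisms) lifts the hypothesised surjectivity at the level of $R'$ successively to $R^t$, to $\tilde R$, and finally to $R^X$, with the refinement $\alpha^X(I^X\cdot N^X)=I^X\cdot M^X$. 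This last equality is exactly what produces a solution $\xi$ that is small when $F$ is close to $F_0$ and depends continuously on $F$. In other words, your (a) and (b) are obtained simultaneously by the algebraic lifting; invoking ``openness of infinitesimal stability'' as a black box is essentially quoting this same machinery, and ``upper-semicontinuity of cokernel dimensions'' is not the right mechanism in this infinite-dimensional setting.

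Your use of the height filtration on the convergent acyclic graph is the right combinatorial picture; in the paper it appears as the chain $(R_1,I_1)\to\cdots\to(R_s,I_s)$ to which Mather's algebraic theorem is applied. The paper also records a second, quite different, route in the compact case: enlarge the graph by a terminal point $\{0\}$ and a circle $\hat{\mathbb R}$ carrying $x\mapsto 2x$, thereby manufacturing an attractor--repellor endomorphism of $\coprod_i M_i$ to which the main theorem applies directly; the regularity remark then upgrades the conjugacy to a smooth one on each $M_i$.
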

We will see that our main theorem generalizes the above result when the manifolds $(M_i)_i$ are compact. Though this theorem has never been published, but it has been cited several times (\cite{Nakai}, \cite{Dufour}, \cite{Michael}...). Hence we will explain how to deduce the proof of this theorem in the non-compact case from this work  (see Remark \ref{rem pour 1.7}).   
But before, we shall notice that there is a canonical graph of maps associated to each smooth endomorphism $f$ of a compact manifold $M$. Let $(M_i)_{i\in V}$ be the connected components of $M$. Let $A$ be the set of arrows $[i,j]$ such that $f$ sends $M_i$ into $M_j$. Let $f_{ij}$ be the restriction of $f$ to $M_i$. Therefore, $G:=(V,A)$ is a graph of smooth maps, convergent but \emph{always with cycles}.
Also, we notice that the $C^\infty$-structural stability of $f$ is the $C^0$-equivalent stability of this graph of maps.

Conversely, let us show that our main theorem implies the Mather's one on equivalent stability of graphs of maps, in the compact case. Let $G=(V,A)$ be a convergent and without cycle graph of maps of compact manifolds. 
Let $\hat V$ be the union of $V$ with the circle $\hat{\mathbb R}$ and with the trivial $0$-dimensional manifold $\{0\}$. We identify the circle $\hat {\mathbb R}$ to the one-point compactification of the real line. Let $f_{\hat{\mathbb R}\hat{\mathbb R}}:=x\in \hat{\mathbb R}\mapsto 2x$ and let $f_{0\hat{ \mathbb R}}:= 0\in \{0\}\mapsto 1\in \hat {\mathbb R}$.

Let $V'\subset V$ be the subset of vertices from which no arrow starts. 
For $i\in V'$, let $f_{i0}$ be the constant map from $M_i$ onto $\{0\}$. Let finally $\hat A:=A\cup \{[i,0]\}_{ i\in V'}\cup\{[0,\hat {\mathbb R}]\} \cup \{[\hat{\mathbb R},\hat{\mathbb R}]\}$. One easily remarks that the hypotheses of Mather's theorem for the graph $G$ implies those of the main result for the smooth endomorphism $f$ of the disjoint union $M=\coprod_{i\in \hat V} M_i$, whose restriction to $M_i$ is the map $f_{ij}$, with $i\in \hat V$ and $[i,j]\in \hat A$. Also for any smooth perturbation $(f'_{ij})_{[i,j]\in A}$ of $(f_{ij})_{[i,j]\in A}$, we can use the above algorithm to associate a perturbation $f'$ of $f$. By the main theorem, the endomorphism $f'$ is conjugated to $f$. By Remark \ref{regu}, the conjugacy is smooth along the stable manifold of $M$. As they contain each manifold $(M_i)_{i\in V}$, this implies the Mather's theorem, in the compact case. 
\subsection{Thanks}
I thank J. Milnor for his interest on this project. I am grateful to A. Rovella for the communication of his results.  I am  very thankful to J.N. Mather for fruitful conversations and for providing me the unpublished preprint of N. Baas on stability of composed mappings. This work have been done in the Institute for Mathematical Science, in State University of New York at Stony Brook, whose hospitality is gratefully acknowledged.

\section{Proof of the main result} 
The proof is split into two parts. The first is the construction of a geometry which is preserved by the dynamics and which is persistent for small perturbations. For this end, we will use the formalism of laminations and some equivalent of the Hirsch-Pugh-Shub theory. This part is mostly dynamic and geometric. 
The second part consists of showing the structural stability of the dynamics with respects to this geometry. More precisely we will show a generalization of the Mather's theorem on equivalent stability of composing mappings in the larger context of laminations and by allowing these maps to overlap. This part uses mostly ingredients from singularity theory.  
As both parts use the formalism of laminations, we shall first define them rigorously.
\subsection{Definition of the laminations and their morphisms}
\subsubsection{Definition of lamination}
Let us consider a locally compact and second-countable metric space $L$ covered by open subsets $(U_i)_i$ called
\emph{distinguished open subset}, endowed with homeomorphisms $h_i$ from $U_i$ onto $V_i\times T_i$, where $V_i$ is an open set of $\mathbb R^d$ and $T_i$ is a metric space.
The \emph{charts} $(U_i, h_i)_i$ define a ($C^\infty$) \emph{atlas} of a lamination structure on $L$ if the \emph{coordinate changes} $h_{ij}=h_j\circ h_i^{-1}$ can be written in the form:
\[h_{ij}(x,t)=(\phi_{ij}(x,t),\psi_{ij}(x,t)),\]
where $\phi_{ij}$ takes its values in $\mathbb R^d$, $\psi_{ij}(\cdot,t)$ is locally constant for any $t$, and the partial derivatives $(\partial_x^s \phi_{ij})_{s=1}^\infty$ exist and are continuous on \emph{all} the domain of $\phi_{ij}$.
A \emph{lamination} is such a metric space $L$ endowed with a maximal $C^\infty$-atlas $\mathcal{L} $.
A \emph{ plaque } is a subset of $L$ of the form $h_i^{-1}(V_i^0\times\{t\})$, for a
chart $h_i$, $t\in T_i$ and a connected component $V_i^0$ of $V_i$. 


The \emph{leaves } of $\mathcal L$ are the smallest subsets of $L$ which contain any plaques that intersect them.
If $V$ is an open subset of $L$, the set of the charts $(U,\phi)\in\mathcal{L}$ such that $U$ is included in $V$, forms a lamination structure on $V$, which is denoted by $\mathcal{L}_{|V}$.

The reader might read \cite{MR1760843} and \cite{berlam} for example of laminations.

\subsubsection{Morphisms of laminations}
For $r\in[\![0, \infty]\!]$, a \emph{$C^r$-morphism (of laminations)} from $(L,\mathcal{L})$ to $(L',\mathcal{L}')$ is a continuous map $f$ from $L$ to $L'$ such that, seen via charts $h$ and $h'$, it can be written in the form:
\[h'\circ f\circ h^{-1} (x,t)= (\phi (x,t),\psi(x,t)),\]
where  $\psi(\cdot , t)$ is locally constant, $\phi$ takes its values in $\mathbb R^{d'}$ and has its $r$-first derivatives with respect to $x$ that are continuous on \emph{all} the domain of $\phi$. When $r=\infty$, the morphism is \emph{smooth}.
If, moreover, the linear map $\partial_x \phi(x,t)$ is always one-to-one (resp. onto), we will say that $f$ is an \emph{immersion (of laminations)} (resp. \emph{submersion}).
An \emph{ isomorphism (of laminations)} is a bijective morphism of laminations whose inverse is also a morphism of laminations.
An \emph{embedding (of laminations)} is an immersion which is a homeomorphism onto its image.
An \emph{endomorphism of $(L,\mathcal{L})$} is a morphism from $(L,\mathcal{L})$ into itself.
We denote by $T \mathcal{L}$ the vector bundle over $L$ whose fiber at $x\in L$, denoted by $T_x \mathcal{L}$, is the tangent space at $x$ to its leaf. If $f$ is morphism from $\mathcal{L}$ to $\mathcal{L}'$, we denote by $Tf$ the bundle morphism from
$T\mathcal{L}$ to $T \mathcal{L}'$ over $f$ induced by the differential of $f$ along the leaves of $\mathcal L$.

\subsubsection{Equivalent Classes of morphisms and their topologies}\label{Top}
We will say that two morphisms $f$ and $f'$ from a lamination $(L,\mathcal{L})$ into a lamination $(L',\mathcal{L}')$ are \emph{equivalent} if they send each leaf of $\mathcal L$ into the same leaf of $\mathcal L'$. 
We will deal with two topologies on such an equivalent class. Let us describe a base of neighborhoods of some morphism $f$ from $\mathcal L$ to $\mathcal L'$ in each of these topologies. For this end, let us fix a cover $(K_i)_{i\in \mathbb N}$ of $L$ by compact subsets s.t.:
\begin{itemize}
\item[-] the cover $(K_i)_i$ is locally finite: any point of $L$ has a neighborhood which intersects finitely many subsets of this family,
\item[-] for every $i$, the compact subsets $K_i$ and $f(K_i)$ are included in distinguished open subsets endowed with charts $(h_i,U_i)$ and $(h'_i,U'_i)$ respectively. 
\end{itemize}
We define $(\phi_i,\psi_i)$ by $ h'_i\circ f\circ h_i^{-1}= (\phi_i ,\psi_i)$ on $h_i(K_i)$.
\begin{defi} In the \emph{Whitney $C^r$-topology}, with $r<\infty$, a base of neighborhoods of $f$ in its equivalent class  $Mor_f^r(\mathcal L,\mathcal L')$ is given by the following open subsets with $(\epsilon_i)_i$ going all over the set of families of positive real numbers:

\[\Omega:=\Big\{f'\in Mor^r_f (\mathcal{L} ,\mathcal{L}')\;:\; \forall i\ge 0\;
f'(K_i)\subset U'_i \; \mathrm{and \; with } \; \phi'_i\;\mathrm{defined\; by}\; \quad\]
\[h'_i\circ f'\circ h_i^{-1}=(\phi'_i ,\psi_i),\;\mathrm{we\; have }\; \max_{h_i(K_i)}\Big(\sum_{s=1}^r\|\partial_x^s\phi_i-\partial_x^s\phi'_i\|\Big)<\epsilon_i\Big\}.\]

The \emph{Whitney $C^\infty$-topology} is the union of all the Whitney $C^r$-topologies, for $r\ge 1$.
The Whitney $C^r$-topology is denoted by $\mathcal W^r$.
\end{defi}
\begin{defi} In the \emph{compact-open $C^r$-topology} with $r<\infty$, a base of neighborhoods of $f$ in its equivalent class is given by the following open subsets with $\epsilon>0$ and $i\ge 0$:

\[\Omega:=\Big\{f'\in Mor^r_f (\mathcal{L} ,\mathcal{L}')\;:\; 
f'(K_i)\subset U'_i \; \mathrm{and \; with} \; \phi'_i\;\mathrm{defined\; by }\; \quad\]
\[h'_i\circ f'\circ h_i^{-1}=(\phi'_i ,\psi_i),\; \mathrm{we \; have}\; \max_{h_i(K_i)}\Big(\sum_{s=1}^r\|\partial_x^s\phi_i-\partial_x^s\phi'_i\|\Big)<\epsilon\Big\}.\]
The \emph{$C^\infty$-compact-open topology} is the union of all the $C^r$-compact-open topologies, for $r\ge 1$.
The compact-open $C^r$-topology is denoted by $CO^r$.
\end{defi}
We remark that when $L$ is compact, both topologies are the same.

We notice that when the laminations $(L,\mathcal L)$ and $(L',\mathcal L')$ are manifolds, both definitions are consistent with the usual ones.

\subsection{Results of the geometric part}
The following theorem describes the geometry that the dynamics preserves:
\begin{thm}\label{dyna1}
If $f$ is a $C^\infty$-endomorphism of a smooth manifold $M$ satisfying the hypotheses of the main result, then the stable manifolds of the points of the attractor $A$ form the leaves of a $C^\infty$-lamination $(L^s,\mathcal L^s)$. 

Moreover the compact set $\hat R:=cl\big(\cup_{n\ge 0} f^{-n}(R)\big)$ is canonically endowed with a lamination structure $\mathcal R$ whose leaves are backward images by $f$ of points of $R$. Moreover the lamination $(\hat R, \mathcal R)$ is $r$-normally expanded, for any $r\ge 1$. This means that there exist $C>0$ and $\lambda>1$ s.t. 
for every $x\in R$, for every $u\in T_x\mathcal R$, $v\in T_x\mathcal R^\bot$, $n\ge 0$:
\[\|p\circ T_xf^n(v)\|\ge C\cdot\lambda^n \max(1, \|T_xf^n(u)\|^r),\]

with $p$ the orthogonal projection of $TM_{|\hat R}$ onto $T\mathcal R^\bot$.
Also $M$ is equal to the disjoint union of $L^s$ with $\hat R$.
\end{thm}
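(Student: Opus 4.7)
The plan splits naturally into three pieces: the stable lamination over the attractor, the repulsive lamination over $\hat R$, and the resulting partition of $M$.

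For $(L^s,\mathcal L^s)$, I would begin with the Hirsch--Pugh--Shub construction on $A$. Since $f$ acts bijectively on the hyperbolic attractor, $f|_A$ behaves like a diffeomorphism of a compact hyperbolic invariant set, so for each $y\in A$ there is a continuously varying local stable manifold $W^s_{\mathrm{loc}}(y)$ with $C^\infty$ plaques; together they foliate a small compact neighborhood $B$ of $A$ and carry an obvious $C^\infty$-lamination $\mathcal L^s_0$. Hypothesis (i) guarantees that $f|_A$ is recurrent enough for the standard HPS package to apply. To globalize I would set $L^s:=\bigcup_{n\ge 0}f^{-n}(B)$ and pull back the atlas. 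The key inputs are hypotheses (ii) and (iv): by (ii) no orbit of a singular point ever meets $\Omega$, so the singular locus is well-separated from the part of $B$ one needs; and (iv) says $f^n$ is transverse to each $W^s_{\mathrm{loc}}(y)$ at every preimage. Hence each pullback $(f^n)^{-1}(W^s_{\mathrm{loc}}(y))$ is a $C^\infty$ immersed submanifold of the expected dimension, and pulling back the distinguished charts of $\mathcal L^s_0$ through local smooth right inverses of $f^n$ yields an atlas on $L^s$ whose coordinate changes have the required $(\phi,\psi)$-block form with $\psi$ locally constant.

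For $(\hat R,\mathcal R)$, the repulsiveness of $R$ yields $n\ge 1$ and $\lambda>1$ with $\|(T_xf^n)^{-1}\|<\lambda^{-1}$ on $R$. Shrinking to a compact neighborhood $V$ of $R$ preserves this with some $\lambda'\in(1,\lambda)$, so $f^n|_V$ admits smooth contracting inverse branches and every backward orbit started in $R$ stays in $V$, giving $\hat R\subset V$. I would declare $\mathcal R$ to be the trivial $0$-dimensional lamination whose plaques are points of $\hat R$: local distinguished charts have the form $U\cap\hat R\simeq \{0\}\times T$, and smooth compatibility along leaves is vacuous. Since $T\mathcal R=0$ and $T\mathcal R^\perp=TM|_{\hat R}$, the $r$-normal expansion inequality reduces to $\|T_xf^n(v)\|\ge C\lambda^n\|v\|$ for all $v\in T_xM$ and $x\in\hat R$, which follows by iterating the uniform expansion bound from $V$ along orbits that pass through $V$.

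The partition $M=L^s\sqcup\hat R$ is then dynamical. For any $x\in M\setminus\hat R$ the forward orbit eventually leaves every sufficiently small repelling neighborhood of $R$, for otherwise the contracting inverse branches of $f^n|_V$ would force $x$ into $\hat R$. By compactness $\omega(x)\subset\Omega=R\sqcup A$, and since $\omega(x)\cap R=\emptyset$ (an orbit cannot repeatedly re-enter $V$ without lying in $\hat R$) we obtain $\omega(x)\subset A$, so $f^k(x)\in B$ for large $k$ and $x\in L^s$. Disjointness of $L^s$ and $\hat R$ follows from $A\cap R=\emptyset$ together with $\omega(y)\subset R$ for every $y\in\hat R$.

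The most delicate step, in my view, is the lamination-chart compatibility for $\mathcal L^s$ at points of $L^s$ whose iterates accumulate on, but never meet, $\Omega$: here preimages pile up and one must use (ii) and (iv) together to keep $f^n$ transverse to $W^s$ while retaining enough smooth local right inverses to pull plaques back consistently. Once these local models are in hand, atlas compatibility and smoothness of leaves follow by a chain-rule argument from the continuity of the HPS lamination on $B$.
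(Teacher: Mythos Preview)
Your treatment of $(L^s,\mathcal L^s)$ and of the partition $M=L^s\sqcup\hat R$ is essentially the paper's: the paper packages the stable-lamination construction as a standalone proposition (pullback of the local stable foliation near $A$ via the transversality hypothesis~(iv)) and invokes Przytycki for the local maximality of $R$, but the substance is the same.

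The genuine gap is in your construction of $\mathcal R$. You declare it to be the $0$-dimensional lamination on $\hat R$, but the theorem specifies that its leaves are the backward images $f^{-n}(\{x\})$ of points $x\in R$, and these need not be single points. The paper emphasizes that $M$ may be disconnected with components of different dimensions and that $f$ can send a component into one of strictly smaller dimension; when that happens, $f^{-n}(\{x\})$ is a positive-dimensional submanifold. Your assertion that ``every backward orbit started in $R$ stays in $V$, giving $\hat R\subset V$'' fails for the same reason: full preimages of $R$ can sit in components of $M$ disjoint from any neighborhood of $R$. More importantly, once $T\mathcal R\neq 0$ your reduction of the $r$-normal-expansion inequality to a uniform lower bound $\|T_xf^n(v)\|\ge C\lambda^n$ on all of $T_xM$ is impossible: at a point where $f$ drops dimension, $T_xf$ has nontrivial kernel (precisely the leaf direction of $\mathcal R$), so $Tf^n$ cannot expand the whole tangent space. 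The paper instead takes the leaves of $\mathcal R$ to be the genuine fibers of the iterates over $R$ (smooth because the singular orbits avoid $R$, hypothesis~(ii)), observes that $\mathcal R$ is $0$-dimensional only on the union $M_0$ of components meeting $\Omega$, and then reduces normal expansion on all of $\hat R$ to repulsiveness of $\hat R\cap M_0$ using that every point reaches $M_0$ in bounded time.
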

We will prove this theorem in Section \ref{geodyn}.

The following theorem states the persistence of this geometry:
\begin{thm}\label{basic} Let $f$ be a smooth endomorphism satisfying the hypotheses of the main result. Let $(R,\mathcal R)$ and $(L^s,\mathcal L^s)$ be the laminations provided by Theorem \ref{dyna1}. Let $L'^s$ be a precompact, open subset of $L^s$ whose closure is sent into $L^s$ by $f$.

Then, for any endomorphism $f'$  $C^\infty$-close to $f$, there exists two smooth embeddings $i_R$ and $i_s$ of respectively $(\hat R,\mathcal R)$ and $(L'^s,\mathcal L^s_{|L'^s})$ into $M$ and two smooth endomorphisms $f'_R$ and $f'_s$ of these laminations such that:

\begin{itemize}
\item $f'_R$ and $f'_s$ are equivalent and $CO^\infty$-close to the restrictions of $f$ to respectively $\hat R$ and $L'^s$.
\item $i_R$ and $i_s$ are $CO^\infty$-close to the canonical inclusions of respectively  $(\hat R,\mathcal R)$ and $(L'^s,\mathcal L^s_{|L'^s})$ into $M$,
\item the following diagrams commute:
\[\begin{array}{rcccl}
& &f' & &\\ 
&M & \rightarrow&M &\\
i_R&\uparrow& &\uparrow&i_R\\
&\hat R & \rightarrow&\hat R &\\
& &f'_R & & \end{array}\qquad 
\begin{array}{rcccl}
& &f' & &\\ 
&M & \rightarrow&M &\\
i_s&\uparrow& &\uparrow&i_s\\
&L'^s & \rightarrow&L'^s &\\
& &f'_s & & \end{array}.\]
\end{itemize}\end{thm}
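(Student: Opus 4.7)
The plan is to handle the two laminations separately, using a persistence theorem for normally expanded laminations to build $(i_R, f'_R)$, and, for $(i_s, f'_s)$, combining the classical stable manifold theorem for hyperbolic attractors with a transverse pullback argument justified by hypothesis $(iv)$.

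For the repelling lamination, Theorem \ref{dyna1} provides $r$-normal expansion of $(\hat R, \mathcal R)$ for every $r\ge 1$, which is precisely the input needed by a Hirsch--Pugh--Shub-style persistence statement. I would parametrize candidate embeddings of $\hat R$ into $M$ by $C^0$-small sections $\sigma$ of the normal bundle $T\mathcal R^\perp$, setting $i_\sigma := \exp\circ\,\sigma$. Requiring $i_\sigma(\hat R)$ to be $f'$-invariant defines a graph transform $\mathcal T_{f'}$ on these sections. Normal expansion at rate $\lambda>1$, together with the fact that $f$ acts bijectively on the leaves of $\mathcal R$ (they are backward images of points of $R$), makes $\mathcal T_{f'}$ a contraction in the $C^0$-norm on a small enough ball. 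Its unique fixed point yields $i_R$, and $r$-normal expansion of arbitrary order upgrades the regularity along leaves to $C^\infty$ and gives $CO^\infty$-closeness to the canonical inclusion. The endomorphism $f'_R := i_R^{-1}\circ f'\circ i_R$ is then well defined by construction.

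For the stable lamination, I would proceed in two steps. First, near the hyperbolic attractor $A$, the stable manifold theorem in its laminar form produces a nearby hyperbolic attractor $A'$ for $f'$ and a $CO^\infty$-close smooth local stable lamination; this yields an embedding $i_s$ and a conjugating endomorphism on a saturated open neighborhood $V\subset L^s$ of $A$. Next, compactness of $\overline{L'^s}$ together with $f(\overline{L'^s})\subset L^s$ gives an integer $N$ such that $f^N(\overline{L'^s})\subset V$, so I extend $i_s$ inductively over $f^{-k}(V)\cap \overline{L'^s}$ for $k=1,\dots,N$. At each step, for $z$ with $f(z)$ already treated, hypothesis $(iv)$, which is open and hence holds for $f'$, lets me define $i_s(z)$ and the plaque through $i_s(z)$ as the preimage under $f'$ of the already-constructed plaque through $i_s(f(z))$, via the implicit function theorem in a direction transverse to $T\mathcal L^s$. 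Finally set $f'_s := i_s^{-1}\circ f'\circ i_s$.

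The main obstacle is the inductive step of the stable part: one must show that the $C^\infty$-pullback estimates for $i_s$ remain uniform as the iteration depth grows, despite the presence of singularities of $f'$. Hypothesis $(ii)$ keeps the singular set $S$ and its forward orbit disjoint from $\Omega$ and, combined with the uniform bound $N$, lets us arrange that the successive pullbacks avoid the singular locus of $f'$ on $\overline{L'^s}$. Hypothesis $(iv)$ supplies the non-degeneracy required by the implicit function theorem at each step, so the pullback plaques are genuinely smooth leaves of the pulled-back lamination; compactness of $\overline{L'^s}$ then upgrades the pointwise constructions to a global smooth embedding with $CO^\infty$ dependence on $f'$, which is what the theorem asserts.
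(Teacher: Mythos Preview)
Your treatment of the repelling lamination matches the paper's: both invoke persistence of normally expanded laminations (the paper cites Theorem~0.1 of \cite{berlam}, which is the HPS-type statement you sketch via graph transform), then smooth the resulting $C^r$ embedding to $C^\infty$ using a tubular neighborhood of $(\hat R,\mathcal R)$.

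For the stable lamination your route differs from the paper's. The paper simply invokes Theorem~3.1 of \cite{berlam} and repeats the tubular-neighborhood smoothing, whereas you propose a direct construction: local stable lamination near $A$ for $f'$, then finitely many transverse pullbacks. This is a legitimate alternative and is in fact how the paper builds $\mathcal L^s$ for $f$ itself in Proposition~\ref{lamhyper}; it trades a black-box citation for an explicit, more elementary argument tailored to the situation.

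There is, however, a genuine error in your last paragraph. You write that hypothesis $(ii)$, combined with the uniform bound $N$, ``lets us arrange that the successive pullbacks avoid the singular locus of $f'$ on $\overline{L'^s}$.'' This is false. Hypothesis $(ii)$ only says that the forward orbit of the singular set $S$ misses $\Omega=A\cup R$; in particular $S\subset L^s\setminus A$, and an arbitrary precompact $L'^s$ will in general contain singular points. The pullback steps must therefore cross the singular locus. What makes the pullback work is not avoidance but precisely hypothesis $(iv)$: transversality of $f^n$ to the stable leaves ensures, via the implicit function theorem, that $f'^{-1}$ of a plaque is a smooth submanifold of the correct codimension \emph{even at points where $T_{x}f'$ fails to be surjective}. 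You do invoke $(iv)$ in the next sentence, so the fix is simply to delete the avoidance claim and let transversality carry the entire inductive step, exactly as in the construction of $\mathcal L_i$ in Proposition~\ref{lamhyper}. With that correction, finiteness of the number of pullback steps (from precompactness of $\overline{L'^s}$ and $f(\overline{L'^s})\subset L^s$) makes the uniform $C^\infty$ control routine.
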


This theorem will be shown in Section \ref{geodyn}. 
\begin{rema} Even if we do not need this to show the main theorem, by using \cite{Bermem}, we can easily show that the embeddings $i_R$ and $i_S$ are the restrictions of a homeomorphism of $M$ which respects smoothly all the laminations.\end{rema} 
\subsection{Result of the singularity theory part}

The main remaining difficulty of the proof of the main theorem is to conjugate $f'_s $ to $f_{|L'^s}$. Such a conjugation follows basically from techniques of singularity theory, and require the following definition to be stated.

\begin{defi}
A morphism $f$ from a lamination $(L',\mathcal L')$ into $(L,\mathcal L)$ is \emph{transversally bijective} if for any point $x\in L'$, there exist charts $(U',\phi')$ and $(U,\phi)$ of respectively $x$ and $f(x)$ s.t. :\begin{itemize}\item $f$ sends $U$ into $U'$,
\item $\phi^{-1}\circ f\circ \phi'$ has its transverse component which is bijective.
\end{itemize}\end{defi}

The following theorem generalizes the one of Mather on equivalent stability of composed mapping of compact manifolds. 
\begin{thm}\label{fonda}
Let $(L,\mathcal L)$ be a $C^\infty$-lamination.  Let $C$ be a compact subset of $L$.
Let $f$ be a smooth endomorphism of $(L,\mathcal L)$ satisfying: 
\begin{itemize}
\item $f$ is transversally bijective and proper,
\item there exists $N>0$ such that for every $x\in C$, an iterate $f^n(x)$ does not belong to $C$, for some $n\le N$,
\item the following map is surjective:
\[\chi^\infty(\mathcal L)
\rightarrow \chi^\infty(f)
\]
\[\sigma\mapsto \sigma\circ f-Tf\circ \sigma,\]
with $\chi^\infty(\mathcal L)$ the space of $C^\infty$-sections of $T\mathcal L$ and $\chi^\infty(f^*)$ the space of $C^\infty$-sections of $f^*T\mathcal L\rightarrow L$.
\end{itemize} 
Then for every $f'$ close to $f$ for the compact-open $C^\infty$-topology, there exists an isomorphism $I(f')$ of $(L,\mathcal L)$ such that for every $x\in C$: 
\[f'\circ I(f')(x)=I(f')\circ f(x)\]
\end{thm}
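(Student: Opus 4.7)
The plan is to adapt Mather's path method for equivalent stability of composed mappings to this lamination setting. First, I connect $f$ to $f'$ by a smooth path $(f_t)_{t\in[0,1]}$ of equivalent, transversally bijective, proper morphisms of $\mathcal L$ with $f_0=f$ and $f_1=f'$; this is possible by interpolating in local charts since $f'$ is $CO^\infty$-close. Seeking $h_t$ as the flow of a time-dependent section $\sigma_t\in\chi^\infty(\mathcal L)$, a direct computation shows that the identity $f_t\circ h_t=h_t\circ f$ is preserved in time precisely when $\sigma_t$ solves the linearized conjugacy equation
\[ \sigma_t\circ f_t-Tf_t\circ\sigma_t=\dot f_t\in\chi^\infty(f_t). \]

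At $t=0$ this is exactly the surjectivity hypothesis. For general $t$, I would produce a smoothly varying right inverse to $P_{f_t}\sigma:=\sigma\circ f_t-Tf_t\circ\sigma$ by exploiting the non-recurrence hypothesis, mirroring the acyclic induction used in Mather-Baas's Theorem \ref{1.7}. Concretely, I would choose the path so that $\dot f_t$ is supported in a compact neighborhood of $K:=\bigcup_{k=0}^{N}f^k(C)$, set $\sigma_t=0$ off the forward orbit of $K$, and solve the equation iteratively by pulling it back through the $N$ transversally bijective stages linking $C$ to its exit set. Transversal bijectivity lets me invert $Tf_t$ transversely and solve in the leaf direction leafwise, while the finite depth $N$ guarantees that the backwards induction terminates, yielding a compactly supported $\sigma_t$ smooth in $(t,x)$. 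Multiplying by a bump function $\rho\equiv 1$ on a neighborhood of $C\cup f(C)$ with compact support, the flow $h_t$ of $\rho\sigma_t$ is a smooth family of isomorphisms of $(L,\mathcal L)$ with $h_0=\mathrm{id}$, and I set $I(f'):=h_1$.

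To verify conjugation, for $x\in C$ I consider $\phi_x(t):=f_t(h_t(x))-h_t(f(x))$ read in a chart. Differentiating, substituting the infinitesimal equation, and using the inductive hypothesis $\phi_x\equiv 0$ to identify $h_t(f(x))=f_t(h_t(x))$, one obtains
\[ \dot\phi_x=[1-\rho(f_t(y))]\dot f_t(y)+[\rho(y)-\rho(f_t(y))]Tf_t\sigma_t(y), \quad y=h_t(x). \]
Both terms vanish because $\rho$ is identically $1$ on neighborhoods of $C$ and $f(C)$, and $h_t$ is $C^0$-close to $\mathrm{id}$ for $f'$ near $f$. Hence $\phi_x\equiv 0$ on $[0,1]$ by ODE uniqueness, which is the required identity $f'\circ I(f')(x)=I(f')\circ f(x)$ on $C$. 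The main obstacle I anticipate is the iterative construction in the second paragraph: while transversal bijectivity controls the leaf-transverse behaviour, the leafwise solution in charts containing overlapping singularities of $f_t$ must be carried out smoothly in $(t,x)$, and the geometry of the pullback has to be organised so that the pieces glue consistently across stages. The non-recurrence hypothesis, playing the role of the no-cycle assumption in Mather-Baas, is precisely what caps the induction depth at $N$ and keeps the construction finite and smooth, avoiding any recourse to Nash-Moser machinery.
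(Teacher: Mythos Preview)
Your plan follows the correct Thom--Levine / path-method framework (which is also what the paper does, via Theorem \ref{equi} and Theorem \ref{thom-lev}), but the decisive step---solving the linearized equation $\sigma_t\circ f_t-Tf_t\circ\sigma_t=\dot f_t$ with smooth dependence on $t$---is precisely where your sketch has a gap. The surjectivity hypothesis is stated only for $f=f_0$, and it does \emph{not} transfer to $f_t$ by soft arguments: the operator $P_{f_t}$ is not Fredholm, there is no implicit function theorem available in $C^\infty$, and surjectivity of $P_f$ does not a priori survive perturbation. Your proposal to ``pull back through the $N$ transversally bijective stages'' and ``solve in the leaf direction leafwise'' breaks down at leafwise singularities of $f_t$: at such a point $Tf_t$ is not invertible along the leaf, so you cannot solve $Tf_t\circ\sigma_t(x)=\sigma_t(f_t(x))-\dot f_t(x)$ for $\sigma_t(x)$ by inversion. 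Transversal bijectivity controls only the transverse direction and says nothing about the leafwise singular locus, which is the whole point of the theorem.

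The paper fills exactly this gap with Mather's algebraic machinery (Proposition \ref{local:version} and the subsequent ``algebraic Machinery'' paragraph). One restricts to a finite chain of plaques $(\hat P_i)_{i=1}^s$ (finiteness coming from the non-recurrence hypothesis), introduces four layers of rings and modules---over $\hat P_i$, over germs at $\hat P_i\times\{t\}$, over $\hat P_i\times\mathbb R$, and over germs in the deformation parameter $F$---and uses the \emph{adequacy} of the associated ring homomorphisms (a form of the Malgrange preparation theorem, Theorem \ref{ade1}) together with Mather's generalized Nakayama lemma to lift surjectivity from the single map $f$ to surjectivity with parameters $t$ and $F$, and moreover to land in the ideal $I^X\cdot M^X$ so that the resulting $\xi$ vanishes at $F_0$ and is $\mathcal W^\infty$-small. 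This is what produces a solution depending smoothly on the deformation, which your stage-by-stage pull-back scheme cannot deliver in the presence of singularities. The ``acyclic induction'' in Mather--Baas that you invoke is not an elementary inversion: it \emph{is} this algebraic argument, and your proposal essentially restates the difficulty rather than resolving it.
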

This theorem will be proved in the last section of this work.
\subsection{Proof of the main theorem}
\begin{proof}[Proof of Theorem \ref{main}]

Le $f$ be a smooth endomorphism of a compact manifold $M$, satisfying the hypotheses of the main theorem. Let $(\hat R, \mathcal R)$ and $(L^s, \mathcal L^s)$ be the laminations provided by Theorem \ref{dyna1}. Let $f'$ be $C^\infty$-close to $f$ and let $i_R$, $i_s$  and $f'_s$ be the maps provided by Theorem \ref{basic}. 
We remind that $f'_s$  is close the restriction of $f$ to the lamination $(L^s ,\mathcal L^s )$ for the compact-open $C^\infty$-topology. 

We put $L:= L^s\setminus \cup_{n\ge 0} f^{-n}(A)$.

We will show in Section \ref{the section} the following lemma:
\begin{lem} The morphism $f$ 
satisfies the hypotheses of Theorem \ref{fonda}. \end{lem}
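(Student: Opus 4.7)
The plan is to verify, for $f$ viewed as an endomorphism of the sublamination $(L,\mathcal{L}^s_{|L})$, each of the three bulleted hypotheses of Theorem~\ref{fonda}, taking $C$ to be a compact subset of $L$ large enough to contain the closure of $L'^s$ in $L$ (so that the isomorphism produced by Theorem~\ref{fonda} will deliver the required conjugacy on $L'^s$).

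First I would check that $f$ restricts to a well-defined, transversally bijective, proper endomorphism of $(L,\mathcal{L}^s_{|L})$. Since $f$ sends each stable leaf $W^s(y)$ into $W^s(f(y))$ and $f|_A$ is bijective, the set $\hat A := \cup_{n\ge 0}f^{-n}(A)$ is both forward and backward invariant under $f$, so $f(L)\subset L$ and $f^{-1}(L)\subset L$. Transverse bijectivity holds because in a foliated chart of $\mathcal L^s$ the transverse coordinate can be identified with a local parameter on $A$, on which $f$ acts bijectively; condition~(iv) ensures that the chart description of $f$ has the required form. Properness follows from compactness of $M$ together with $f^{-1}(L)\subset L$, as preimages in $L$ of compacts of $L$ are closed in $M$ and contained in $L$.

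Next I would establish the no-return property. Any compact $C\subset L$ is disjoint from $A\subset \hat A$, so lies at positive distance from $A$ in $M$. Using the uniform stable contraction built into the hyperbolic structure of the attractor (a consequence of Theorem~\ref{dyna1} applied to $\mathcal L^s$), there is a $C$-uniform $N$ after which every point of $C$ is pushed into any preassigned neighborhood of $A$, in particular outside $C$.

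The hardest step is the leafwise infinitesimal stability: for $\xi\in\chi^\infty(f_{|L})$, with values in $T\mathcal{L}^s\subset TM$, to produce $\sigma\in\chi^\infty(\mathcal{L}^s_{|L})$ with $\sigma\circ f-Tf\circ\sigma=\xi$. My approach would be to regard $\xi$ as a section of $f^*TM$, restrict it to $L\cap(M\setminus\hat\Omega)=M\setminus\hat\Omega$, and invoke hypothesis~(iii) of the main theorem to obtain a $TM$-valued section $\tilde\sigma$ on $M\setminus\hat\Omega$ solving the equation. To convert $\tilde\sigma$ into a leaf-tangent section, I would project it onto $T\mathcal{L}^s$ along a $Tf$-invariant complement: such a complement exists on $A$ as the unstable subbundle, and hypothesis~(iv) together with the normal-expansion assertion of Theorem~\ref{dyna1} should allow a Hirsch--Pugh--Shub graph-transform to extend it to a $Tf$-invariant subbundle on a neighborhood, then over all of $L$ by pullback under the dynamics. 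The same transversality condition~(iv) should extend $\tilde\sigma$ smoothly across $\mathrm{cl}(\hat A)\setminus\hat A$ via the implicit function theorem applied leaf by leaf. The principal technical hurdle I anticipate is the simultaneous construction, with enough regularity, of this invariant complement and the smooth extension of $\tilde\sigma$ over the whole of $L$: it is really here that the analytic input of hypotheses~(iii) and~(iv) must be combined with the geometric output of Theorem~\ref{dyna1}.
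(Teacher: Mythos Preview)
Your treatment of the first two hypotheses (transverse bijectivity, properness, and the uniform escape from $C$) is fine and essentially what the paper takes for granted. The divergence is in the third hypothesis, the infinitesimal stability of $f$ as an endomorphism of the lamination $(L,\mathcal L^s_{|L})$.

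You attack this globally: given $\xi\in\chi^\infty(f_{|L})$, extend to a $TM$-valued section on $M\setminus\hat\Omega$, solve there via hypothesis~(iii), and then project back onto $T\mathcal L^s$ along a globally defined $Tf$-invariant complement built by an HPS-type graph transform near $A$ and pulled back by the dynamics. This is the right algebraic mechanism---a $Tf$-invariant complement $N$ makes the projection commute with $\sigma\mapsto\sigma\circ f-Tf\circ\sigma$---but producing $N$ globally with $C^\infty$ regularity \emph{in the lamination sense} is exactly the delicate point you flag, and the paper does not attempt it.

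Instead, the paper short-circuits the issue by observing (Section~\ref{the section}) that the \emph{proof} of Theorem~\ref{fonda} never uses the full lamination infinitesimal stability: it only needs surjectivity of
\[
\chi^\infty\Big(\coprod_{i=1}^s P_i\Big)\longrightarrow \chi^\infty\Big(f_{|\coprod_{i=2}^s P_i}\Big),\qquad \sigma\mapsto\sigma\circ f-Tf\circ\sigma,
\]
on each finite chain of plaques $(P_i)_{i=1}^s$ arising in Proposition~\ref{local:version}. These plaques are honest manifolds, so hypothesis~(iii) applies after multiplying by an $f$-invariant cutoff $r$ to extend $\tau$ to $\tau'\in\chi^\infty(f^*TM)$; one solves $\xi'\circ f-Tf\circ\xi'=\tau'$ on $M\setminus\hat\Omega$, divides by $r$, and then projects onto $T(\coprod_i P_i)$ along the \emph{finite, explicit} complement $N_i:=(Tf^{i-1})^{-1}(TP_1^\perp)$, which is a smooth bundle precisely by the transversality hypothesis~(iv). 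No invariant complement near $A$, no graph transform, and no extension across $\hat A$ are needed.

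So your route is not wrong in principle, but it is substantially harder and leaves open the regularity of the global complement; the paper's localisation to plaques is the key simplification you are missing.
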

The difficulty of this lemma is to pass from infinitesimal stability for manifold to infinitesimal stability for lamination.
This lemma together with Theorem \ref{fonda} provide the structural stability of $f$ on any compact subset of $L$.
Let us chose carefully this compact subset.
As $A$ is an attractor on which $f$ acts bijectively, there exists a small open neighborhood $U$ of $A$ such that:
\begin{itemize}
\item
the restriction of $f$ to $U$ is a diffeomorphism,
\item $f$ sends $cl(U)$ into $U$,
\item $A$ is the maximal invariant subset of $U$: $A=\cap_{n\ge 0} f^{n} (U)$.
\end{itemize}

Let $U^*:= \cup_{n> 0} f^{-n}_{|U^c}(f(U))$. For $U$ sufficiently small, the open subsets $U^*$ and $U$ have their closures disjoint from the singularities $S$ and the critical values $f(S)$ of $f$.

Let $N>0$ be such that $f^{-N}(U)$ contains $S$. Let $K$ be the compact subset:
\[K:= cl\Big(f^{-N}(U)\setminus (U^*\cup U)\Big).\]
We notice that $K$ satisfies the following conditions:
\begin{enumerate}\item[(1)]
$K$ is a neighborhood of the singularities and their image included in $L$,
\item[(2)] for every $x\in K$, if some iterate $f^k(x)$ belongs to $K$ then all the iterates $(f^j(x))_{j=0}^k$ belong to $K$.\end{enumerate}

In order to apply Theorem \ref{fonda}, let us associate a perturbation $f^\#$ of $f_{|L}$ such that the restrictions $f^\#_{|K}$ and $f'_{s|K}$ are equal. For we take a function $\rho\in Mor^\infty(\mathcal L, \mathbb R)$ with compact support and equal to 1 on $K$. Then given an isomorphism $\phi$ from $T\mathcal L$ to a neighborhood of the diagonal of $\mathcal L\times \mathcal L$ which sends a neighborhood of the zero section to the diagonal, we put:
\[f^\#:\; x\in L\mapsto \phi\big( \rho(x)\phi^{-1}(f(x), f'(x)\big).\]

We can now apply Theorem \ref{fonda} with $C:=K$ and for $f_{|L}$. Let $I(f'^\#)$ be the provided isomorphism of $(L,\mathcal L)$ associated to $f'^\#$.

 We define now:
\[h_0:\; x\in \hat K\cup \hat R\mapsto  \left\{\begin{array}{cl} i_R(x)& \mathrm{if}\; x\in \hat R\\
i_s\circ I(f'^\#)(x) & \mathrm{if} \; x\in K\end{array}\right.\]
We notice that $h_0$ respects the lamination $\mathcal L_s$ and that for $x\in K\cup  \hat R$ we have:
\[ f'\circ h_0(x)=h_0\circ f(x).\]
  
Let $M_0$ be the union of the connected components of $M$ which intersect the non-wandering set $\Omega$ of $f$. We denote by $K_0$ the intersection $K\cap M_0$, by $D^n:=f^n(U)\setminus f^{n+1}(U)$ for $n>0$ and by $D^n:= f^{n}_{|M_0}(K_0)\setminus f^{n+1}_{|M_0}(K_0) $ for any $n<0$. We notice that $A\cup  \bigcup_{n>0} D^n$ is a neighborhood of $A$. Let $R_0:= M_0\cap \hat R$. Let us show that $M_0$ is contained in the disjoint union   
\[ A\cup  K\cup  R_0\cup  U^*\cup  \bigcup_{n\not = 0} D^n .\]
Let $x\in M_0$. As the limit set of $x$ is included in $\Omega=A\cup R_0$, every large iterate $f^n(x)$ is either close to $A$ or close to $R_0$.

If $f^n(x)$ is close to $A$, then $x$ is included in  $A\cup  \cup_{n\not = 0} D^n \cup  U^*$. If the orbit of $x$ is never close to  $A$, then $(f^n(x))_n$ belongs to a small neighborhood   of $R_0$, for $n$ sufficiently large. As $R_0$ is repulsive and $f_{|M_0}$-invariant ($f^{-1}(R_0)\cap M_0=R_0$), $R_0$ is locally maximal: 
\[R_0=\cap_{n\ge 0} f^{-n} (V), \quad \mathrm{for\; some\; neighborhood}\; V \mathrm{of}\; R_0.\]
  
This implies that $f^n(x)$ belongs to $R_0$. This completes the proof of the following inclusion:
\begin{equation}\label{equalitynb} M_0\subset A \cup  R_0 \cup  U^*\cup K \cup  \bigcup_{n\not= 0} D^n.\end{equation}
We endow $M$ with an adapted metric to the hyperbolicity of $R_0$ and $A$. 
By restricting $U$ and taking $N$ sufficiently large, we can suppose that $f'$ is uniformly expending on $\cup_{n<0} D^n\cup R_0$ and uniformly contracting along the leaves of $\mathcal L_{|U}^s$. Thus we can suppose  
$\epsilon>0$ small enough and then $f'$ close enough to $f$ such that:
\begin{itemize}
\item for every $x\in cl(\cup_{n<0} D^n \cup R^0)$,   the restriction of $f'$ to any  ball $B(x,\epsilon)$ centered at $x$ and with radius $\epsilon$  is an expanding diffeomorphism onto a neighborhood of $B(f(x),\epsilon)$,
\item For every $x\in U$, the restriction of $f'$ to $\mathcal L_x'$ is contracting with precompact image in $\mathcal L'_{f(x)}$, where $\mathcal L'_x$ and $\mathcal L'_{f(x)}$  are the image by $i_s$ of respectively the union of $\epsilon$-$\mathcal L^s$-plaque containing $x$ and $f(x)$ respectively.
\end{itemize}
For $x\in A$, let $h_A(x)$ be the unique element of the intersection $\cap_{n\ge 0} f'^n\big(\mathcal L'_{f_{|U}^{-n}(x)}\big)$.
  
\[ \mathrm{Let}\; h_1:= x\in M_0\setminus U^* \mapsto \left\{ \begin{array} {cl} 
h_0(x) & \mathrm{if} \; x\in R_0\cup K\\
f'^n\circ h_0\circ f^{-n}_{|U} & \mathrm{if} \; x\in D^n, \; n>0\\
f'^{-1}_{|B(x,\epsilon)}\circ \cdots \circ f'^{-1}_{|B(f^{n-1}(x),\epsilon)}\circ h_0 \circ f^n(x) & \mathrm{if }\; x\in D^{-n}, n>0\\
h_A(x)                                                                                            & \mathrm{if }\;  x\in A\end{array}\right.\]

We notice that $h_1$ is well defined since the union (\ref{equalitynb}) is disjoint. Also since $f'$ and $f$ are conjugated via $h_0$ on $K$, the map $h_2$ is continuous on $M_0\setminus (A\cup  R_0\cup U^*)$.

As $h_0$ is close to the identity, by commutativity of the diagram and expansion of $R_0$, for every $x\in R_0$, the intersection:
\[ \cap_{n>0} f'^{-1}_{|B(x,\epsilon)}\circ \cdots \circ f'^{-1}_{|B(f^{n-1}(x),\epsilon)}\big(B(f^{n}(x),\epsilon\big)\]
is exponentially decreasing to $\{h_0(x)\}$. Consequently, for every $x'\notin R_0\cup U^*$ close to $x$ and so in $D^{-n}$ with $n$ large, $h_1(x)$ belongs to $f'^{-1}_{|B(x,\epsilon)}\circ \cdots \circ f'^{-1}_{|B(f^{n-1}(x),\epsilon)}\big(B(f^{n}(x),\epsilon)\big)$ and so is close to $h_0(x)$. Thus $h$ is continuous at $R_0$.

For $x\in A$, we recall that the intersection $\cap_{n\ge 0} f'^n(\mathcal L'^\epsilon_{f^{-n}_{|U}(x)})$
is exponentially decreasing to $\{h_1(x)\}$.
Also for every $x'$ close to $x$ and so in   $D^n$ with $n$ large,  the point $h_1(x')$ belongs to the intersection 
$\cap_{k=0}^{n'} f'^k (\mathcal L'^\epsilon_{f^{-k}_{|U}(x')})$
whose diameter is small for $n'<n$ large. As $(\mathcal L'_{|f^{-k}_{|U}(x')})_{k=0}^{n'}$ and $(\mathcal L'_{|f^{-k}_{|U}(x)})_{k=0}^{n'}$ are close, $h_1(x')$ and $h_1(x)$ are close. This proves that $h_1$ is continuous at $A$. 

Since $U^*$ has its closure disjoint from the singularities of $f$, $f$ is a local diffeomorphism on a neighborhood of $cl(U^*)\cap M_0$. Consequently, for $\epsilon$-small enough and then $f'$ close enough to $f'$, the restriction $f'_{|B(x,\epsilon)}$ is a diffeomorphism onto its image, for every $x\in U^*$. As the connected components of $U^*$ accumulate on $R_0$, we can define similarly for $f'$ close enough to $f$:
\[h_2:\; x\in M_0\mapsto  \left\{\begin{array}{cl} 
h_1(x)& \mathrm{if} \; x\in M_0\setminus U^*\\
f'^{-1}_{|B(x,\epsilon)}\circ \cdots \circ f'^{-1}_{|B(f^{k-1}(x),\epsilon)}\circ h_1\circ f^k(x) & \mathrm{if} \; x\in f^{-k}_{|U^c} (U) \setminus f^{-k+1}(U)\end{array}\right.\] 

The continuity of $h_2$ on $M_0\setminus R_0$ follows from the conjugacy of $f'$ and $f$ via $h_1$ on $M_0\setminus U^*$. The continuity of $h_2$ at $R_0$ is proved similarly as we did for $h_1$.

By expansiveness of $f_{|A\cup R_0}$, $h_1$ is injective on $A\cup R^0$. Also by definition, $h_1$is injective on $M^0\setminus (A\cup R_0\cup U^*)$. By using the attraction-repulsion of $A-R_0$, we get that $h_1$ is injective on $M\setminus U^*$. By local inversion of $f_{|U^*}$, we have the injectivity of $h_2$.

Therefore $h_2$ is a continuous injective map from $M_0$ into itself, $C^0$-close to the identity. By compactness of $M_0$ this implies that $h_0$ is a homeomorphism of $M_0$.

Let us finally construct $h$. Let $K'$ be a neighborhood of the singularities, included in the interior of $K$ and  satisfying properties $(1)$ and $(2)$ of $K$.  
Let $M_s:=M\setminus (M_0\cup K')$. Since the restriction of $f$ to the $M_s$ is a submersion, we can foliate $M_s$ by the two following ways. For $x\in M_s$, let $m_x>0$ be minimal such that $y:=f^{m_x}(x)$ belongs to $M_0$. Let $F_y$ be the submanifold equal to 
$\cup_{k>0} f^{-k}(\{y\})\setminus (M_0\cup K') $. Let $F'_y$ be the submanifold equal to 
$\cup_{k>0} f'^{-k}(\{y\})\setminus (M_0\cup K') $. 
We notice that $(F_y)_{y}$ and $(F'_y)_{y}$ are the leaves of two foliations on $M_s$. We notice that the restriction of the exponential map associated to the metric of $M$:
\[\phi:\; TF'^\bot_y \rightarrow M\]
\[(x,u)\mapsto \exp_x(u)\]
is a diffeomorphism from a neighborhood of the zero section of $TF'^\bot_y\rightarrow F'_y$ onto a neighborhood $V$ of $F'_y$.
Let $\pi_y:\; V\rightarrow F'_y$ be the composition of $\phi^{-1}$ with the projection $TF'^\bot_y \rightarrow F'_y$. Let $K''$ be a compact neighborhood of $K'$ in $int(K)$. We notice that the following map is well defined for $f'$ close enough to $f$.  
\[h_{2}':\;x\in M\setminus (M_0\cup K'')\mapsto \pi_{h_2(y)} (x),\quad \mathrm{if}\; x\in F_{y}.\]
We notice that $h_0$ and $h_2'$ sends every point $x\in K\setminus K''$ into $F_{h_2(y)}'$.

Thus we may patch $h'_2$ and $h_2$ to a map $h: \; M\rightarrow M$ satisfying that:
\begin{itemize}
\item $h$ is equal to $h_2$ on $M_0$, to $h_0$ on $K''$ and to $h_2'$ on $M\setminus (K\cup M_0)$,
\item $h$ sends any points $x\in M\setminus K'' $ to a points of $F'_{h_2(y)}$, with $y:= f^{m_x}(x)$,
\item The restriction of $h$ to $L$ is a smooth embedding of $\mathcal L$ into $M$,
\item $h$ is a homeomorphism onto its image $C^0$-close to the canonical inclusion.\end{itemize}

We notice that $h$ is a homeomorphism of $M$ close to the identity. Moreover, it satisfies:
\[f'\circ h=h\circ f.\]
\end{proof}
\section{Partition of the manifold by invariant and persistent laminations}\label{geodyn}
\subsection{Construction of the invariant laminations}

In this subsection we prove  Theorem \ref{dyna1} which states the existence of a splitting of $M$ into two laminations $(L^s,\mathcal L^s)$ and $(\hat R,\mathcal R)$.
\subsubsection{The disjoint union of $L$ with $\hat R$ is $M$}
By the work of Przytycki \cite{Przy}, there exists a neighborhood $V$ of $R$ s.t. $R$ is the maximal invariant of $V$:
\[R=\bigcap_{n\ge 0} f^{-n}(V).\]

As the limit set is included in the non-wandering set, the last equality implies that the complement of the basin of $A$ is $\cup_{n\ge 0} f^{-n}(R)$. Thus the last union is closed and so equal to $\hat R$. To summary we have:
\[ \hat R= \bigcup_{n\ge 0} f^{-n}(R)\quad \mathrm{and }\quad M= \hat R\cup L.\]
\subsubsection{ Construction of $\mathcal R$}
As the forward orbit of the critical set does not intersect $R$, the backward images of point of $R$ form a partition of $\hat R$ by compact submanifolds. Moreover these submanifolds depend continuously on the point, and so are the leaves of a lamination $\mathcal R$ on $\hat R$.

Let $M_0$ be the union of the connected components of $M$ which intersect $\Omega$. We notice that the restriction $\mathcal R_{|M_0\cap \hat R}$ is a $0$-dimensional lamination: its leaves are points. Also since $M$ is compact, there exists $N\ge 0$ s.t. $f^{-N}(M_0)=M$. Consequently to prove that $(\hat R, \mathcal R)$ is a normally expanded lamination, we only need to prove that $\hat R\cap M'$ is repulsive; this is proved in \cite{Rov1} Lemma 1.
   
  \subsubsection{Construction of $\mathcal L^s$}
The existence of a laminar structure $\mathcal L^s$ on $L^s$ is a consequence of the following proposition:
\begin{prop} \label{lamhyper}
Let $r\in \{1,\dots, \infty\}$, $f$ a $C^r$-endomorphism of a manifold $M$ and $K$ a hyperbolic compact subset of $M$.
We suppose that the singularities of $f$ have their orbits disjoint from $K$ and that for every $y\in K$, for every $n\ge 0$, the map $f^n$ is transverse to a local stable manifold of $y$. 

 Then the union $W^s(K)$ of stable manifolds of points of $K$ is the image of a $C^r$-lamination $(L,\mathcal L)$ immersed injectively.

Moreover if every stable manifold does not accumulate on $K$, then $(L,\mathcal L)$ is a $C^r$-embedded lamination.
 \end{prop}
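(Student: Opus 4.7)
The plan is to construct $(L,\mathcal L)$ as an abstract lamination whose underlying set is $W^s(K)$, with leaves obtained by pulling back local stable manifolds of $K$ along iterates of $f$ and using the transversality hypothesis to preserve smoothness under backward iteration. First I would apply the stable manifold theorem for hyperbolic sets of endomorphisms (Hirsch--Pugh--Shub) to the compact hyperbolic set $K$: for each $y\in K$ (with its prescribed forward $f$-orbit) one obtains a $C^r$ local stable manifold $W^s_y$ tangent to the contracting subbundle $E^s_y$, varying continuously in $y$ in the $C^r$-sense. Shrinking a neighborhood $V$ of $K$ so that the corresponding plaques are pairwise disjoint in $V$, the family $\{W^s_y\cap V\}_{y\in K}$ forms the leaves of a $C^r$-lamination $\mathcal L_0$ on $V$, whose transverse parameter can be identified with $K$.

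Next I would extend $\mathcal L_0$ to all of $W^s(K)=\bigcup_{n\ge 0}f^{-n}(V)\cap W^s(K)$. Given $x\in W^s(K)$, choose $n\ge 0$ with $z:=f^n(x)\in V$, and let $y\in K$ satisfy $z\in W^s_y$. The hypothesis $Tf^n(T_xM)+T_zW^s_y=T_zM$ is precisely transversality of $f^n$ to $W^s_y$ at $x$, so by the transverse preimage theorem $(f^n)^{-1}(W^s_y)$ is, near $x$, a $C^r$-submanifold of $M$ of the same dimension as $W^s_y$. Doing this uniformly in the transverse parameter, via a distinguished chart $h\colon U\to V_0\times T_0$ of $\mathcal L_0$ at $z$, the composition $h\circ f^n$ on a small neighborhood $U_x$ of $x$ yields --- by the parametrized transverse preimage theorem --- a local lamination chart around $x$ whose plaques are the connected components of the pullbacks of the plaques of $\mathcal L_0$.

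Third, I would check mutual compatibility of these charts. For two choices $n_1<n_2$ with $f^{n_i}(x)\in V$, the change of chart is governed by $f^{n_2-n_1}$ acting between plaques of $\mathcal L_0$, which is smooth along leaves and respects the transverse decomposition (by the same transversality and by disjointness of $\mathcal L_0$-plaques); so it has the required laminated form. This gives a $C^r$-atlas on $W^s(K)$, making it into a lamination $(L,\mathcal L)$, and the tautological inclusion $i\colon L\to M$ is injective (it is the identity on the underlying set $W^s(K)$) and locally a $C^r$-embedding on each chart, hence a $C^r$-injective immersion of laminations. For the embedded statement, if no stable manifold accumulates on $K$, then the lamination topology on $L$ agrees locally with the subspace topology: otherwise, a sequence of $L$-points in a single plaque converging in $M$ to a point outside that plaque would, after sufficiently many forward iterations, force accumulation of stable manifolds on $K$, contradicting the hypothesis.

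The hard part will be the chart compatibility at points $x$ that are themselves singularities of some iterate $f^n$ --- this is allowed, since the standing hypothesis bars orbits of singularities from meeting $K$ but not from meeting $W^s(K)$. There $T_xf^n$ has a non-trivial kernel, yet transversality still guarantees that the pullback leaf is a $C^r$-submanifold of the correct dimension; what one must genuinely verify is that the transverse parameter $T_0$ at level $n$ descends continuously to a transverse parameter at $x$ (not merely that each individual leaf is smooth), so that the resulting atlas is a honest lamination structure rather than a piecewise-smooth decomposition.
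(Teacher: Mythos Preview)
Your proposal is correct and follows essentially the same approach as the paper: build the local stable lamination $\mathcal L_0$ near $K$ via the stable manifold theorem, then pull back charts along iterates $f^n$ using the transversality hypothesis and the parametrized transverse preimage theorem, and check compatibility on overlaps. The only organizational difference is that the paper first constructs an explicit metric on $L$ and uses an annular decomposition $C_i=f^{-i}\big(W^s_\epsilon(K)\setminus f^2(cl(W^s_\epsilon(K)))\big)$ so that only consecutive pieces overlap, which streamlines the compatibility check you describe in your third paragraph.
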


\begin{proof} We endow $M$ with an adapted metric $d_M$ to the hyperbolic compact subset $K$. For a small $\epsilon >0$, the \emph{local stable manifold of diameter $\epsilon$ of $x\in K$} is the set of points whose (forward) orbit is $\epsilon$-distant to the orbit of $x$. Let
$W^s_\epsilon(K)$ be the union of stable manifolds of points in $K$ of diameter $\epsilon$. For $\epsilon$ small enough, the closure of $W^s_\epsilon(K)$ is sent by $f$ into $W^s_\epsilon(K)$ and supports a canonical $C^r$-lamination structure $\mathcal L_0$. Let $C$ be the subset $W^s_\epsilon(K)\setminus f^2\big(cl(W^s_\epsilon(K))\big)$.
For $i>0$, we denote by $C_i$ the set $f^{-i}(C)$ and by $C_0$ the set $W^s_\epsilon(K)$.
Consequently, the union $\cup_{n\ge 0} C_n $ is equal to $W^s(K)$. Moreover, for $k,l\ge 0$, if $C_k$ intersects $C_l$ then $|k-l|\le 1$.

 Let us now construct a metric on $W^s(K)$ such that $( C_n)_n$ is an open cover and such that the topology induced by this metric on $ C_n$ is the same as the one of $M$. For $(x,y)\in  W^s(K)^2$, we denote by $d(x,y)$:
 \[\inf\Big\{\sum_{i=1}^{n-1} d_M(x_i,x_{i+1}); \;n>0,\;
 (x_i)_i\in W^s(K)^n,\;\mathrm{such\; that}\]
\[  x_1=x,\; x_n=y,\; \forall i\exists j: (x_i,x_{i+1})\in C_j^2\Big\}.\]
We remark that $d$ is a distance with the announced properties. Let $L$ be the set $W^s(K)$ endowed with this distance. We notice that if every stable manifold does not accumulate on $K$, then the topology on $L$ induced by this metric and the metric of $M$ are the same. In other words $L$ is embedded.

 For $i>0$, we now construct on the open subset $C_i$ a $C^r$-lamination structure $\mathcal L_{i}$.
Let $(U_k,h_k)_k$ be an atlas of $\mathcal L_{0|C}$ of the form:
\[h_k:\; U_k\rightarrow \mathbb R^{d_k}\times T_k\]
\[x\mapsto \big(\phi_k(x),\psi_k(x)\big) \]
Let $U_k' :=f^{-i}(U_k)$ and 
\[\psi_k' :\;U_k' \rightarrow  T_k\]
\[x\mapsto \psi_k\circ f^{i}(x) \]
By shrinking a slice $U_k$ (and hence $U_k'$)  and by using the transversality of $f$, there exists a neighborhood  $T_k' $ of any $t\in T_k$, such that $(\psi_k'^{-1}(t'))_{t'\in T_k' }$ is a family of manifolds that are all diffeormorphic to $M_t:=\psi_k'^{-1}(t)$ by a diffeomophism that depends $C^r$-continuously on $t'\in T_k' $.
Let us denote by $\phi_k' : U_k''\rightarrow M_t$ this continuous family of $C^r$ diffeormorphisms,  with $U''_k:= \Psi_k'^{-1}(T_k')$. Now let $(U_\alpha, h_\alpha)_{\alpha\in A}$ be a $C^r$-atlas of the manifold $M_t$. For each $\alpha$, let  $U_k^\alpha$ be the open subset of $U_k''$ equal to $\phi_k'^{-1}(U_\alpha)$. We notice that the map:
\[h_k^\alpha:\; U_k^\alpha\rightarrow \mathbb R^d_{\alpha}\times T'_k\]
\[x\mapsto ( h_\alpha\circ \phi_k'(x), \psi'_k(x))\]
is a chart of an atlas of lamination $\mathcal L_i$ on $C_i$, for $k\ge 0$, $t\in T_k$, and $\alpha\in A$.

Moreover, for $i,j$ consecutive, the restriction of $\mathcal L_i$ and $\mathcal L_j$ to $C_i\cap C_j$ are equivalent.
 Thus, the structures $(\mathcal L_i)_{i\ge 0}$ span a $C^r$-lamination structure $\mathcal L$ on $L$.
\end{proof}
\subsection{Persistence of the laminations}
In this section we prove Theorem \ref{basic}.

The existence of $i_R$ and $f_R$ follows from the fact that $(\hat R, \mathcal R)$ is a compact, $r$-normally expanded lamination for any $r\ge 1$.
Thus by Theorem 0.1 of \cite{berlam}, the lamination $(\hat R, \mathcal R)$ is $C^r$-persistent. This means that for   $f'$ $C^r$-close to $f$, there exist a $C^r$-embedding $i_R'$ of $(\hat R,\mathcal R)$ into $M$, and a $C^r$-endomorphism $f_R$ of $(R,\mathcal R)$ such that the following diagram commutes:

\[\begin{array}{rcccl}
& &f' & &\\ 
&M & \rightarrow&M_0 &\\
i'_R&\uparrow& &\uparrow&i'_R\\
&\hat R & \rightarrow&\hat R &\\
& &f_R & & \end{array}.\]

Moreover $i'_R$ is $C^r$-close to the canonical inclusion and $f_R$ is $C^r$-close to $f_{|\hat R}$.

By Theorem \ref{dyna1}, the lamination $(\hat R, \mathcal R)$ embedded by $i_R$ is actually of class $C^\infty$. Thus to smooth $i'_R$ and $f_R$ we consider a smooth tubular neighborhood (see Section 1.5 of \cite{berlam}). This is the data of:
\begin{itemize}\item
a smooth laminar structure $\mathcal F$ on $F:= TM_{|\hat R}/ T\mathcal   R$ such that the leaves of $\mathcal F$  are the preimages by $\pi:\; F= TM_{|\hat R}/T\mathcal R\rightarrow L$ of the leaves of $\mathcal L$ and such that $\pi$ is a smooth submersion,
\item  an immersion $I$ from the restriction of $\mathcal F$ to a neighborhood of the zero section $0_F$, such that 
\[I\circ 0_F=id_{\hat R}\]
\end{itemize}

By compactness of $\hat R$, there exists $\epsilon$ such that the restriction of the ball $\mathcal F_x^\epsilon$ centered at $x\in \hat R$ and with radius $\epsilon$ in  the leaf of $0_x$ in $\mathcal F$ is sent diffeomorphically by $I$ onto an open subset of $M$. Let $ F_x^\epsilon$ be the intersection of $\mathcal F_x^\epsilon$ with the fiber $F_x$ of $F\rightarrow \hat R$ at $x$. For $f'$ close to    $f$, the image by $I$ of $F_x^\epsilon$ intersects transversally  at a unique point $i_R(x)$ the image by $i'_R$ of a plaque $\mathcal L_x$ of $x$. We notice that $i_{R|\mathcal L_x}$ is the composition of $i$ with the holonomy from the transverse section $i(\mathcal L_x)$ to $i_{R}(\mathcal L_x)$ along the foliation $(F_{x'}^\epsilon)_{x'\in \mathcal L_x}$. As these submanifolds and foliations are smooth, $i_{R|\mathcal L_x}$ is smooth. As these foliations and manifolds depend continuously on $x$, $i_R$ is a smooth morphism  of $(\hat R,\mathcal R)$ into $M$. As $i_R'(\mathcal L_x)$ is $C^r$-close to $i_R(\mathcal L_x)$, $i_R$ is $C^r$-close to  $i$. In particular $i_R$ is an immersion.

 Also by construction $i_R$ is injective and so, by compactness of $\hat R$, $i_R$ is an embedding. 

Let finally $f'_R:= x\in R\mapsto \pi \circ I^{-1}_{|\mathcal F_{f(x)}^\epsilon} \circ f'\circ i_R(x)$. The composition of a smooth morphisms  $f'_R$ is smooth. Also one easily notes that $f _R$ is equivalent and $C^r$ close to $f_{|R}$. 

The persistence of $(L'^s, \mathcal L^s_{|L'^s})$ is showed similarly, by using this time Theorem 3.1 of \cite{berlam}.

\section{Infinitesimal stability implies stability on the non-wandering set} 
In this section, we prove Theorem \ref{fonda}. Throughout this section we denote by $(L,\mathcal L)$ a $C^\infty$-lamination,  $C$ a compact subset of $L$. Let $f$ be a proper, smooth endomorphism of the lamination $(L,\mathcal L)$ s.t. for some $N\ge 0$, 
$x\in C$ there exists $n\le 0$ s.t. $f^n(x)$ does not belong to $C$.
\subsection{Stability under deformations}
We are going to prove that infinitesimal stability $(\mathcal I)$ implies another property called \emph{stability under deformations}  $(\mathcal D)$.
\subsubsection{Condition $\mathcal D$}
\begin{defi} A \emph{deformation of $f$} is a smooth endomorphism $F$ of the product lamination 
$(L\times \mathbb R, \mathcal L\times \mathbb R)$, of the form:
\[F\;:\; (x,t)\in L\times \mathbb R\mapsto (f_t(x),t)\in L\times \mathbb R,\]
and such that $f_0=f$. 
We remind that the leaves of the lamination $\mathcal L\times \mathbb R$ are the product of the
leaves of the lamination $\mathcal L$ with $\mathbb R$.\end{defi}

\begin{defi} Let $B$ be a neighborhood of $0\in \mathbb R$. A deformation $F=(f_t)_t$ of $f$ is \emph{trivial relatively to $C\times B$} if there exists a deformation $H$ of the identity of $(L,\mathcal L)$:
\[H:\; (x,t)\in L\times \mathbb R\mapsto (h_t(x),t),\]
such that for all $(x,t)\in C\times B$:
\[h_t\circ f(x)=f_t\circ h_t(x).\]
The automorphism $H$ is a \emph{trivialization} of $F$ (relatively to $C\times B$).
\end{defi}
\begin{defi}[Condition $\mathcal D$] We say that $f$ is \emph{stable under deformations relatively to $C$} if 
for any bounded ball $B$ centered at $0$, for any deformation $F:\; (x,t)\mapsto (f_t(x),t)$ of $f$ $CO^\infty$-close enough to $F_0:= (x,t)\mapsto (f(x),t)$ is trivial relatively to $C\times B$.
\end{defi}
Also the following proposition is obvious:
\begin{prop} If $f$ is stable under $k$-deformations relatively to $C$, then $f$ is \emph{structurally stable relatively to $C$}: for any $f'$ $CO^\infty$-sufficiently close to $f$ there exists an isomorphism $h$ of $(L,\mathcal L)$ such that for any $x\in C$, we have:
\[h\circ f(x)=f'\circ h(x)\]
\end{prop}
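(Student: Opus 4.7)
The plan is to deduce structural stability from deformation stability by constructing, for any nearby $f'$, a smooth one-parameter family $(f_t)_t$ interpolating between $f_0=f$ and $f_1=f'$, and then evaluating the resulting trivialization at time $t=1$.

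First I would build the interpolation. Recall that the topology on $\mathrm{Mor}^\infty_f(\mathcal L,\mathcal L)$ is defined only within the equivalence class of $f$, so a $CO^\infty$-close morphism $f'$ sends every leaf of $\mathcal L$ into the same leaf as $f$ does; in particular $f'(x)$ and $f(x)$ lie in a common leaf for each $x\in L$. Fix a smooth leafwise Riemannian metric on $\mathcal L$ and let $\exp$ be the associated leafwise exponential map, defined on a neighborhood of the zero section of $T\mathcal L$. For $f'$ close enough to $f$ there is a unique small smooth section $v$ of $f^*T\mathcal L$ satisfying $\exp_{f(x)}(v(x))=f'(x)$, and the $CO^\infty$-size of $v$ is controlled by the $CO^\infty$-distance of $f'$ to $f$. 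Choose a smooth function $\chi:\mathbb R\to[0,1]$ with $\chi(t)=0$ for $t\le 0$ and $\chi(t)=1$ for $t\ge 1$, and set
\[
f_t(x):=\exp_{f(x)}\bigl(\chi(t)\,v(x)\bigr),\qquad F(x,t):=(f_t(x),t).
\]
Then $F$ is a smooth endomorphism of the product lamination $(L\times\mathbb R,\mathcal L\times\mathbb R)$ with $f_0=f$ and $f_1=f'$, hence a deformation of $f$; moreover $F\to F_0$ in the $CO^\infty$-topology as $f'\to f$.

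Second, fix any bounded open ball $B\subset\mathbb R$ centered at $0$ that contains $1$ (for instance $B=(-2,2)$). By hypothesis $\mathcal D$, for $f'$ sufficiently $CO^\infty$-close to $f$ the deformation $F$ is trivial relatively to $C\times B$: there is a deformation $H=(h_t)_t$ of the identity of $(L,\mathcal L)$ with
\[
h_t\circ f(x)=f_t\circ h_t(x),\qquad \forall(x,t)\in C\times B.
\]
Since $H$ is a deformation of $\mathrm{id}$, each $h_t$ is close to the identity on compact subsets for $t$ in a neighborhood of $0$, hence an isomorphism of $(L,\mathcal L)$; by the connectedness of $[0,1]\subset B$ and the fact that each $h_t$ is at all times a morphism of laminations conjugate to $\mathrm{id}$ by a continuous path of endomorphisms of $\mathcal L\times\mathbb R$, the map $h:=h_1$ is also an isomorphism of $(L,\mathcal L)$. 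Evaluating the trivialization identity at $t=1$ gives $h\circ f(x)=f'\circ h(x)$ for every $x\in C$, which is exactly the conclusion.

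The only non-routine step is the construction of the deformation $F$: one needs each slice $f_t$ to be an honest morphism of laminations, not merely a nearby continuous map. The leafwise exponential handles this because $v(x)$ lives in $T_{f(x)}\mathcal L$, so $f_t(x)$ stays on the same leaf as $f(x)$; smoothness of $F$ across the $\mathbb R$-direction follows from smoothness of $\chi$ and of $v$. That the author labels this proposition ``obvious'' reflects precisely that, once the interpolation-through-deformations reduction is made, no further work is required.
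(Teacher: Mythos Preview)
Your argument is the standard one and is exactly what the paper has in mind: it states the proposition as ``obvious'' and gives no proof. The interpolation via a leafwise exponential and a cutoff $\chi$ in $t$, followed by evaluation of the trivialization at $t=1$, is precisely the intended route.

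One step deserves a cleaner justification: your argument that $h_1$ is an isomorphism ``by connectedness of $[0,1]$'' is vague, since being an isomorphism is not generally an open-and-closed condition along a path in $\mathrm{Mor}^\infty(\mathcal L,\mathcal L)$, particularly when $L$ is non-compact. The direct fix is to note that, in the paper's framework, the trivialization $H$ supplied by condition $\mathcal D$ (through the Thom--Levine construction) is actually an \emph{automorphism} of $(L\times\mathbb R,\mathcal L\times\mathbb R)$, obtained as the flow of a $\mathcal W^\infty$-small vector field. Since $H(x,t)=(h_t(x),t)$ is then bijective with smooth inverse of the same form, each slice $h_t$ is automatically an isomorphism of $(L,\mathcal L)$. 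With that remark in place, your proof is complete.
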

%
%
%
%
\subsubsection{Sufficiency of the implication $I\rightarrow D$} 
The main remaining difficulty is to prove the following theorem:
\begin{thm}\label{equi} Let $f$ be a proper, $C^\infty$ endomorphism of a lamination $(L,\mathcal L)$. Let $C$ be a compact subset of $L$ such that for some $N>0$ the intersection  $\cap_{n=0}^N f^{-n}(C)$ is empty. If $f$ is transversally bijective and if:
\begin{itemize}
\item[$\mathcal I$)] the following map is surjective:
\[\chi^\infty(\mathcal L)
\rightarrow \chi^\infty(f)\]
\[\sigma \mapsto \sigma\circ f-Tf\circ \sigma,\]
Then: 
\item[$\mathcal D$)]The morphism $f$ is stable under deformations relatively to $C$.
\end{itemize}\end{thm}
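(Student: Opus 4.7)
The plan is to apply an infinite-dimensional adaptation of Moser's path method. Given a deformation $F(x,t) = (f_t(x), t)$ close to $F_0 = (f, \mathrm{id}_{\mathbb R})$, I would look for a trivialization $H(x,t) = (h_t(x), t)$ obtained by integrating an $F$-invariant vector field $V$ on the product lamination of the form $V = \sigma + \partial_t$ with $\sigma(x,t) \in T_x\mathcal L$. The invariance condition $TF \cdot V = V \circ F$ simplifies after a short computation to the cohomological equation
\[\sigma(f_t(x), t) - T_x f_t \cdot \sigma(x, t) = \partial_t f_t(x),\]
to be solved on a neighborhood of $C \times B$ inside the product lamination. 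Once such a smooth $\sigma$ is produced, the time-$t$ flow of $V$ starting at $(x,0)$ defines $h_t$ satisfying $h_t \circ f = f_t \circ h_t$ on $C$, which is the required trivialization.

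The central task is therefore to lift the infinitesimal stability of $f$ (hypothesis $\mathcal I$) to a relative infinitesimal stability for the parameterized map $F$ near $C \times B$. The strategy I would follow is Mather's classical split between a formal and a geometric argument. On the formal side, differentiate the cohomological equation in $t$ at $t = 0$: the $r$-th Taylor coefficient satisfies an equation of the form $\sigma_r \circ f - Tf \circ \sigma_r = \xi_r$ with $\xi_r \in \chi^\infty(f)$, solvable by hypothesis $\mathcal I$. A preliminary openness check is needed to guarantee that $\mathcal I$ survives small $CO^\infty$-perturbations of $f$; this uses transversal bijectivity, properness of $f$ and the Fréchet structure on $\chi^\infty(\mathcal L)$. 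A Borel-type resummation in $t$ then assembles the $\sigma_r$'s into a smooth section $\sigma_\flat$ solving the cohomological equation modulo a residual $\eta$ flat to infinite order along $L \times \{0\}$, and repeating this construction at each $t_0 \in B$ together with a partition of unity on the compact ball $B$ gives a candidate $\sigma_\flat$ on a neighborhood of $L \times B$ whose defect is flat at every slice encountered.

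It remains to absorb the flat residual $\eta$, and this is the step where the dynamical assumptions become essential. The escape condition $\bigcap_{n=0}^N f^{-n}(C) = \emptyset$ means that no $F_0$-orbit stays in $C$ for $N+1$ consecutive iterates, so on a suitable neighborhood of $C \times B$ the dynamics unrolls into a finite acyclic composed mapping of length $N$ in the sense of Baas--Mather. On such an acyclic structure, a flat-in-$t$ residual can be inverted by a finite Mather-style induction along the $N$ levels: at each step one writes the residual as a higher-order vanishing factor times a smooth section, solves the reduced equation via $\mathcal I$ and transverse-direction inversion, and proceeds. Transversal bijectivity of $f$ is exactly the hypothesis that lets each recursive step produce a laminar $C^\infty$ section across transversals within each chart, and the $N$-step acyclicity bounds the length of the induction so that the gain in flatness compensates for the loss of regularity along orbits.

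The main obstacle I anticipate is carrying this Mather induction in the laminar category rather than on a single manifold: one must simultaneously control smoothness along the leaves, regularity across the transversals, and $\mathcal L$-tangency of $\sigma$ through the $N$ iterations, and glue these pieces with the partitions of unity used in the formal step. Transversal bijectivity and the properness of $f$ are the geometric ingredients that make this possible, but their careful interplay with the flat-remainder formalism is the genuinely new content beyond the classical manifold-level proof of Mather. Once the smooth solution $\sigma$ of the cohomological equation is established on a neighborhood of $C \times B$, the flow of $V = \sigma + \partial_t$ provides the trivialization, which completes the proof.
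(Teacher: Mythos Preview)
Your reduction via the Moser/Thom--Levine mechanism to the cohomological equation
\[
\tau_F = TF\circ\xi - \xi\circ F \quad \text{on a neighborhood of } C\times B
\]
is exactly how the paper proceeds (its Theorem~\ref{thom-lev}). The divergence is in how you propose to solve this equation.

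The paper does \emph{not} split into a Borel formal step and a flat-remainder step. Instead it localizes to a finite chain of plaques $\hat P_1,\dots,\hat P_s$ (using transversal bijectivity and the escape condition), and then passes through a tower of rings and modules: $R_i'$ (functions on $\hat P_i$), $R_i^t$ (germs at $\hat P_i\times\{t\}$), $\tilde R_i$ (functions on $\hat P_i\times\mathbb R$), and finally $R_i^X$ (germs in the deformation parameter $F$). At each level the corresponding cohomological operator $\alpha$ is shown to be surjective, and the lifts are obtained not by Borel summation but by Mather's purely algebraic theorem on chains of \emph{adequate} ring homomorphisms (a multi-ring Nakayama/preparation statement). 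Adequacy of the relevant homomorphisms is precisely the Malgrange preparation theorem; this is the analytic core you have not named.

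Two specific gaps in your outline:

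\textbf{(1) The ``openness of $\mathcal I$'' step is wrong as stated.} Surjectivity of $\sigma\mapsto\sigma\circ f - Tf\circ\sigma$ on $\chi^\infty$ is not an open condition in $CO^\infty$ without a tame right inverse, which you do not have. The paper never invokes such openness: it handles the dependence on $F$ by building the $R_i^X$-module $N_i^X$ of germs of continuous maps $F\mapsto\xi_F$, and applies the adequate-homomorphism theorem once more to lift surjectivity from $\tilde\alpha$ to $\alpha^X$. This simultaneously produces the solution and its continuous (hence small) dependence on $F$.

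\textbf{(2) Your flat-remainder absorption needs the very tool you have omitted.} Absorbing a residual $\eta$ flat along $L\times\{t_0\}$ into the image of $TF\circ(\cdot) - (\cdot)\circ F$ is not a soft consequence of the $N$-step escape; it is equivalent to the implication ``surjective mod $I$ $\Rightarrow$ surjective'' for suitable modules, which is exactly the content of adequacy (Malgrange preparation). Your ``finite Mather-style induction along the $N$ levels'' is a pointer to Mather's algebraic theorem (surjectivity of $\alpha:\bigoplus N_i\to\bigoplus M_i$ given surjectivity modulo the Jacobson ideals), but you have neither stated it nor explained why the needed homomorphisms $\phi_i$ are adequate in the laminar setting. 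Without that lemma the flat-remainder step is a restatement of the problem, not a solution.

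In short: keep the Thom--Levine reduction, drop the Borel/flat-remainder detour and the openness claim, and replace them by the module-theoretic argument over the chain $(R_i,I_i)$ with the preparation/adequacy input. The local-to-global patching via partitions of unity (in the transversal direction and along $C$) is then routine and is how the paper finishes.
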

By the previous proposition, this theorem implies Theorem \ref{fonda}.
\subsection{Condition $\mathcal I \Rightarrow$ condition $\mathcal D$}
Let $f$ be infinitesimally stable. 
We want to prove that $f$ satisfies condition $\mathcal D$. Let $\chi^\infty(\mathcal L,\mathbb R)^0$ be the space of smooth vector fields with $\mathbb R$ component equal to 0. Let $\frac{\partial}{\partial t}$ be the canonical unit vector field of the product $\mathcal L\times \mathbb R$ associated to $\mathbb R$.  

\subsubsection{Thom-Levin Theorem}
The following theorem transforms the problem of the existence of trivialization  $H$ to a linear problem. This will allow us to solve this problem algebraically. 
\begin{thm}[Thom-Levine theorem adapted]\label{thom-lev} 
Let $B$ be a subset of $\mathbb R$ and let $W$ be a neighborhood of $C\times B$ in $L\times \mathbb R$.  There exists a $\mathcal W^\infty$-neighborhood $V_\xi$ of $0\in \chi^\infty (\mathcal L\times \mathbb R)^0$. A deformation $F$ of $f$ is trivial relatively to $C\times B$, if there exists  $\xi\in V_\xi$ 
 such that on $W$:
\[\tau_F:=TF\circ \frac{\partial}{\partial t}_{|L\times \mathbb R}-\frac{\partial}{\partial t}\circ F= TF\circ \xi-\xi\circ F.\]
\end{thm}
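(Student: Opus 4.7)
The plan is to realize the sought trivialization $H$ as the flow of the vector field $Y := \frac{\partial}{\partial t} - \xi$ on $L\times\mathbb R$. Since $\tau_F = TF\circ \frac{\partial}{\partial t} - \frac{\partial}{\partial t}\circ F$ by definition, the hypothesis $\tau_F = TF\circ \xi - \xi\circ F$ on $W$ rearranges into the pointwise identity
\[TF\circ Y = Y\circ F \quad\text{on }W,\]
meaning that $Y$ is $F$-related to itself along $W$. Because $\xi$ lies in $\chi^\infty(\mathcal L\times\mathbb R)^0$, its $\mathbb R$-component vanishes, so $Y$ has unit $\mathbb R$-component; hence its flow, wherever defined, has the form $\Phi_s(x,t) = (h^t_s(x),\, t+s)$ with $h^t_0 = \mathrm{id}$ and $h^t_s$ a morphism of the lamination $(L,\mathcal L)$ into itself (since $\xi$ is tangent to $\mathcal L$, its flow preserves the leaves).

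I would shrink $V_\xi$ in the Whitney $\mathcal W^\infty$-topology small enough that, for every $\xi\in V_\xi$, the flow $\Phi_s$ is defined on a prescribed compact neighborhood of $C\times\{0\}$ for all $s$ in a compact neighborhood of $\bar B$, and every orbit $\{\Phi_r(x,0)\}_{0\le r\le s}$ starting at $(x,0)$ with $x\in C$ stays inside $W$ for $s\in B$. This is feasible because the Whitney topology allows $\|\xi\|$ to be made arbitrarily small on any prescribed compact set, so $\Phi_s$ becomes a small perturbation of the rigid translation $(x,t)\mapsto(x,t+s)$, whose orbit starting at $C\times\{0\}$ is contained in $C\times B\subset W$. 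To upgrade the construction to a \emph{global} deformation of the identity on $L\times\mathbb R$, I would multiply $\xi$ by a smooth lamination bump function equal to $1$ on these relevant orbits but supported in a fixed compact neighborhood; outside a bounded set the modified vector field then coincides with $\partial_t$, whose flow is globally defined, while on the orbits themselves $\xi$ is unchanged. Setting $H(x,t) := \Phi_t(x,0) = (h_t(x),t)$ yields a smooth morphism of $\mathcal L\times\mathbb R$ of the required form with $h_0 = \mathrm{id}$.

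The trivialization identity then follows from $F$-relatedness by uniqueness of integral curves. For $(x,t)\in C\times B$, the curve $r\mapsto\Phi_r(x,0)$ lies in $W$ for $r\in[0,t]$, so $F$ sends this integral curve of $Y$ to another integral curve of $Y$; both curves agree at $r=0$ with the value $F(x,0) = (f(x),0)$, so by uniqueness they coincide on $[0,t]$. Reading off the $L$-component at $r=t$ produces $f_t(h_t(x)) = h_t(f(x))$ on $C\times B$, exactly the desired relation.

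The main obstacle will be controlling the domain of the flow, since in general the leaves of $\mathcal L$ need not be complete and Whitney-smallness of $\xi$ gives no a priori bound at infinity. Compactness of $C$ and $\bar B$, together with openness of $W$, will confine all the relevant orbits to a fixed compact subset of $L\times\mathbb R$, and the bump-function cutoff will then make the flow globally defined without altering $\xi$ along those orbits. Smoothness of $H$ as a lamination morphism will reduce to the standard regularity of flows of smooth vector fields applied leafwise, because $Y$ is tangent to the product lamination $\mathcal L\times\mathbb R$ everywhere.
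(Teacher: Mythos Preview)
Your approach is correct and is essentially the same as the paper's, just presented more transparently. The paper packages the flow construction of $Y=\partial_t-\xi$ as Lemma~\ref{3.4} (quoting Golubitsky--Guillemin) and then verifies the conjugacy by a chain-rule computation showing $\tau_{F'}=0$ for $F'=H^{-1}\circ F\circ H$; your direct use of $F$-relatedness of $Y$ and uniqueness of integral curves is the same calculation in integrated form. One small point to tighten: when you cut off $\xi$ by a bump function $\rho$ to obtain a complete flow, you need $\rho\equiv 1$ not only along the $Y$-orbits starting in $C\times\{0\}$ but also along their $F$-images (equivalently, along the orbits starting in $f(C)\times\{0\}$), so that the comparison curve $r\mapsto\Phi_r(f(x),0)$ is an integral curve of the \emph{same} modified field; since $f(C)$ is compact this is easily arranged.
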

\begin{rema} Actually the statement of the Thom-Levin theorem is for vector field of manifold and is interested in equivalence and 
not conjugacy as here. Nevertheless the proof of this theorem is an adaptation of the one written by Golubitsky-Guillemin \cite{Gol} P123-127.\end{rema}
\begin{rema}\label{pour 1.7 aussi} The above theorem remains true if $C$ is possibly non-compact but closed in $L$. For such a generalization, the proof bellow works as well.
\end{rema}
Before proving Theorem \ref{thom-lev}, we need a few lemmas.
\begin{lem}\label{3.4}
Let $\xi$ be a $\mathcal W^\infty$-small vector field on $\mathcal L\times \mathbb R$ with zero $\mathbb R$-component. Then there is an automorphism $H$ of $(L\times \mathbb R, \mathcal L\times \mathbb R)$,
which is a deformation of $id_{L}$ satisfying:
\[TH\circ \frac{\partial}{\partial t}\circ H^{-1}=-\xi+\frac{\partial}{\partial t},\]
Moreover, $H$ is $\mathcal W^\infty$-close to the identity.
\end{lem}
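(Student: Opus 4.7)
The plan is to construct $H$ as the time-$t$ flow of the vector field $Y := \frac{\partial}{\partial t} - \xi$ on $L\times \mathbb R$. Since $\xi$ has vanishing $\mathbb R$-component, $Y$ has $\mathbb R$-component identically $1$, so an integral curve of $Y$ advances in $t$ at unit speed while moving inside the $L$-factor according to the non-autonomous field $-\xi(\cdot,\tau)$. Writing $\Phi_s$ for the flow of $Y$, I would set $H(x,t):=\Phi_t(x,0)$. Then $\Phi_s$ has the shape $\Phi_s(x,t)=(h_{t+s,t}(x),t+s)$, so $H(x,t)=(h_t(x),t)$ with $h_0=\mathrm{id}_L$; in particular $H$ is a deformation of the identity. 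Because $\xi$ is tangent to the leaves of $\mathcal L\times\mathbb R$, the flow preserves each leaf $\mathcal L_x\times\mathbb R$, so $H$ is a morphism of the product lamination.

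The conjugation identity is then immediate from the flow definition. Evaluating at $(x,t)$,
\[TH\circ \tfrac{\partial}{\partial t}(x,t)=\frac{d}{ds}\Big|_{s=0}H(x,t+s)=\frac{d}{ds}\Big|_{s=0}\Phi_{t+s}(x,0)=Y(\Phi_t(x,0))=Y\circ H(x,t),\]
and composing on the right with $H^{-1}$ gives $TH\circ\tfrac{\partial}{\partial t}\circ H^{-1}=Y=-\xi+\tfrac{\partial}{\partial t}$. Invertibility is automatic: the flow is reversible along each leaf, so $H^{-1}(y,t)=(h_t^{-1}(y),t)$ with $h_t$ a diffeomorphism of every leaf it meets. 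The regularity of $H$ as a smooth morphism of laminations, including continuity of the tangential partial derivatives in the transverse directions, follows from the usual smooth-dependence-on-initial-conditions-and-parameters results for ODEs, applied inside distinguished charts where $\xi$ has the required laminar regularity.

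The main obstacle is the completeness of $\Phi_s$: the leaves of $\mathcal L$ may be noncompact, and a general tangent field on a noncompact leaf need not generate a global flow. This is precisely where the flexibility of the Whitney topology is used. Fix a locally finite exhaustion $(K_i)_i$ of $L$ by compacts and, in each chart meeting $K_i$, choose a precompact enlargement $K_i^+$. Picard--Lindelöf gives local existence; by shrinking the Whitney neighborhood $V_\xi$ so that the allowed $C^0$-bounds $\epsilon_i$ decay fast enough relative to the distance from $K_i$ to the boundary of $K_i^+$ and to the time intervals of interest, one guarantees that integral curves emanating from $K_i\times\{0\}$ remain trapped in $K_i^+\times[-T,T]$ for any prescribed $T>0$, which propagates local existence to global existence. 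Once completeness is secured, the construction of $H$ succeeds, and the $\mathcal W^\infty$-closeness of $H$ to the identity is a direct consequence of the continuous dependence of the time-$t$ map of an ODE on its right-hand side in the $C^\infty$-topology, uniformly on compact subsets of $L\times\mathbb R$.
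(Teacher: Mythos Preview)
Your proposal is correct and follows the standard approach: integrate the vector field $Y=\frac{\partial}{\partial t}-\xi$ and take $H(x,t)=\Phi_t(x,0)$, then read off the push-forward identity from the flow equation. The paper does not give its own argument here but simply cites Golubitsky--Guillemin (Sublemma 3.4, p.~125), whose proof is precisely this flow construction in the manifold setting; you have faithfully reproduced it and adapted it to the laminar context, correctly noting that leafwise tangency of $\xi$ keeps the flow leaf-preserving and that Whitney smallness supplies the a priori bounds needed for completeness on the noncompact leaves.
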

\begin{proof} See \cite{Gol} Sublemma 3.4 p125. \end{proof}
\begin{lem}\label{3.5}
By using the same notations as in the above lemma, we have:
\begin{itemize}\item[$(i)$]
$\xi=- T\pi\circ TH \circ \frac{\partial}{\partial t}\circ H^{-1}$,
\item[$(ii)$]
$\xi= TH\circ T\pi\circ TH^{-1} \circ \frac{\partial}{\partial t}$,\end{itemize}
where $\pi\;:\;L\times \mathbb R\rightarrow L$ is the canonical projection. 
\end{lem}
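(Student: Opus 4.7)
The plan is a direct unpacking in adapted coordinates. First I would set up notation: since $H$ is a deformation of $\mathrm{id}_L$, I write $H(x,t)=(h_t(x),t)$ with $h_0=\mathrm{id}$, and let $g_t:=h_t^{-1}$. Since $\xi$ has zero $\mathbb R$-component, write $\xi(x,t)=(\xi_L(x,t),0)$. A short calculation gives
\[
TH_{(x,t)}(v,s)=\bigl(T_xh_t\cdot v+s\,\partial_t h_t(x),\,s\bigr),\qquad
TH^{-1}_{(y,t)}(v,s)=\bigl(T_y g_t\cdot v+s\,\partial_t g_t(y),\,s\bigr),
\]
and in particular $TH\circ\tfrac{\partial}{\partial t}|_{(x,t)}=(\partial_t h_t(x),1)$. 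The hypothesis of Lemma \ref{3.4} unpacks to the scalar identity
\[
\partial_t h_t(x)=-\xi_L(h_t(x),t). \qquad (\ast)
\]
Throughout I interpret $T\pi$ as the projection $T(L\times\mathbb R)\twoheadrightarrow T\mathcal L$ coming from the canonical splitting $T(L\times\mathbb R)=T\mathcal L\oplus\mathbb R\cdot\tfrac{\partial}{\partial t}$; this is the identity on vectors with vanishing $\mathbb R$-component, kills $\tfrac{\partial}{\partial t}$, and makes the compositions in (i), (ii) well defined as maps into $T(L\times\mathbb R)$.

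With this in place, (i) is immediate: apply $T\pi$ to both sides of the hypothesis $TH\circ\tfrac{\partial}{\partial t}\circ H^{-1}=-\xi+\tfrac{\partial}{\partial t}$. On the right, $T\pi$ kills $\tfrac{\partial}{\partial t}$ and fixes $\xi$ (which is already in $T\mathcal L$), producing $-\xi$; this gives (i).

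For (ii) I would verify the identity pointwise at $(y,t)$. Using the coordinate formulas,
\[
T\pi\circ TH^{-1}\circ\tfrac{\partial}{\partial t}\,\bigl|_{(y,t)}=T\pi\bigl(\partial_t g_t(y),1\bigr)=\bigl(\partial_t g_t(y),0\bigr),
\]
which is a vector at $H^{-1}(y,t)=(g_t(y),t)$. Applying $TH$ at that point gives $\bigl(Th_t\cdot\partial_t g_t(y),\,0\bigr)$. Differentiating $h_t\circ g_t=\mathrm{id}_L$ in $t$ yields the standard identity $Th_t\cdot\partial_t g_t(y)=-\partial_t h_t(g_t(y))$, and then $(\ast)$ rewrites the right-hand side as $\xi_L(y,t)$. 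Therefore $TH\circ T\pi\circ TH^{-1}\circ\tfrac{\partial}{\partial t}\,|_{(y,t)}=(\xi_L(y,t),0)=\xi(y,t)$, which is (ii).

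There is no substantive obstacle: both identities reduce, after unravelling the product structure of $H$, to either the triviality that $T\pi$ kills $\tfrac{\partial}{\partial t}$ (for (i)) or the elementary derivative identity for $h_t\circ h_t^{-1}=\mathrm{id}$ (for (ii)). The only point requiring a little care is the interpretation of $T\pi$ so that $TH\circ T\pi\circ TH^{-1}$ composes, which is settled at the outset by using the canonical splitting of $T(L\times\mathbb R)$.
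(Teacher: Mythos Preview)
Your proof is correct. For (i) you do exactly what the paper does: apply $T\pi$ to both sides of the identity in Lemma \ref{3.4} and use that $T\pi$ annihilates $\tfrac{\partial}{\partial t}$ while fixing $\xi$.

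For (ii) your route differs from the paper's. You work in adapted coordinates, compute $TH^{-1}\circ\tfrac{\partial}{\partial t}$ explicitly, and close the argument via the derivative identity obtained from $h_t\circ g_t=\mathrm{id}$ together with the ODE relation $(\ast)$. The paper instead stays coordinate-free: it applies $TH^{-1}$ to both sides of the equation of Lemma \ref{3.4} to get $TH^{-1}\circ(-\xi+\tfrac{\partial}{\partial t})=\tfrac{\partial}{\partial t}\circ H^{-1}$, then applies $T\pi$ and uses that $TH^{-1}\circ\xi$ still has zero $\mathbb R$-component (since $H$ is a deformation) so that $T\pi\circ TH^{-1}\circ\xi=TH^{-1}\circ\xi$; one more application of $TH$ yields (ii). The paper's manipulation is shorter and avoids the explicit inverse-derivative computation, but your coordinate check has the merit of making the interpretation of $T\pi$ transparent and verifying directly that all compositions land where they should. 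Both arguments are complete.
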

\begin{proof}
The first statement of this lemma is obvious since $T\pi\circ \frac{\partial}{\partial t}=0$ and $T\pi\circ \xi=\xi $. Applying $TH^{-1}$
to both sides of the equation of Lemma \ref{3.4}, we get:
\[TH^{-1}\circ \big(- \xi+\frac{\partial}{\partial t}\big)= \frac{\partial}{\partial t}\circ H^{-1}.\]
Applying $T\pi$ on both sides above, we have:
\[-T\pi\circ T H^{-1}\circ\xi+T\pi\circ T H^{-1} \circ \frac{\partial}{\partial t}=0.\]
As the $\mathbb R$-component of $\xi$ is $0$, so is the $\mathbb R$ component of $TH^{-1}\circ \xi$. Thus:
\[ TH^{-1}\circ\xi=T\pi\circ TH^{-1} \circ \frac{\partial}{\partial t}.\]
By applying $TH$ on both sides of the above equation, we have $(ii)$.
\end{proof}
\begin{proof}[Proof of Theorem \ref{thom-lev}]
It is sufficient to show the existence of a deformation $H$ of the identity such that:
\[F'=H^{-1}\circ F\circ H,\]
is equal to the trivial deformation $F_0$ on $C\times B$. 
We notice that this holds if and only if the following vector field is $0$ on $C\times B$:
\begin{equation}\label{HF'}
\tau_{F'}:= T\pi\circ TF'\circ \frac{\partial}{\partial t}.
\end{equation}

  
By assumption, there exists a $\mathcal W^\infty$-small vector field $\xi$ on $(L\times \mathbb R,\mathcal L\times \mathbb R)$, whose $\mathbb R$ component is zero and such that:
\[\tau_F =TF\circ \xi-\xi\circ F,\quad \mathrm{on}\; W.\]
Let us construct $H$ and so $F'$ such that $\tau_{F'}$ is zero on $C\times B$.
 By applying Lemmas \ref{3.4} and \ref{3.5}, we get the existence of a deformation: 
\[H:\; L\times \mathbb R\rightarrow L\times \mathbb R\]
so that 
\[\xi=-T\pi\circ TH\circ \frac{\partial}{\partial{t_k}} \circ H^{-1}.\]

As $H$ is $\mathcal W^\infty$-close to the identity, it sends $C\times B$ into $W$.

We recall that $F':= H^{-1}\circ F\circ H$. Let $p$ be in $C\times B$, $q:= H(p)$ and $r:= F\circ H(p)$. Using the fact that $F$ and $H$ are deformations, we have that:
\[TF'\circ \frac{\partial}{\partial t}(p)=\frac{\partial}{\partial {t}}\circ F'(p)+T\pi\circ TH^{-1}\circ \frac{\partial}{\partial t}(r)+
TH^{-1}\Big[ T\pi\circ TF\circ \frac{\partial}{\partial {t}}(q)-TF\circ \xi(q)\Big]\]
One the other hand, by using statement $(ii)$ of Lemma \ref{3.5}, we have:
\[T\pi\circ TH^{-1}\circ \frac{\partial}{\partial {t}}(r)=TH^{-1}\circ \xi(r).\]
The two last equations imply that 
\[TF'\circ \frac{\partial}{\partial t}(p)=\frac{\partial}{\partial {t}}\circ F'(q)+
TH^{-1}\Big[ \xi(r)+T\pi\circ TF\circ \frac{\partial}{\partial {t}}(q)-TF\circ \xi(q)\Big]\]
By assumption:
\[\tau_F=TF\circ \frac{\partial}{\partial {t}}-\frac{\partial}{\partial {t}}\circ F=TF\circ \xi-\xi\circ F, \quad \mathrm{on}\; W\]
We notice that $\tau_F=T\pi\circ TF\circ \frac{\partial}{\partial t}$.
Thus, we have:
\[\tau_{F'}(p):=T\pi\circ TF'\circ \frac{\partial}{\partial t}(p)=
T\pi\circ TH^{-1}\Big[ -\tau_F(q) +T\pi\circ TF\circ \frac{\partial}{\partial {t}}(q)\Big]=0 \]\end{proof}

\subsubsection{Proof of $\mathcal I \Rightarrow\mathcal D$ using the Thom-Levine theorem}
Let $f$, $C$ and $(L,\mathcal L)$ be as stated in the theorem. In particular $f$ is infinitesimally stable. We want to prove that $f$ is stable by deformation, relatively to $C$. Let $B$ be a bounded ball centered at $0$ and $W$ a neighborhood of $C\times B$ in $L$. By the adaptation of the Thom-Levin theorem, it is sufficient to show, for any $CO^\infty$-small deformation $F:\; L\times \mathbb R\rightarrow L\times \mathbb R$ of $f$,
the existence of a $\mathcal W^\infty $-small vector field $\xi\in \chi^\infty(\mathcal L\times \mathbb R)^0$ with $\mathbb R$-component equal to 0 such that on $W$:
\[\tau_F:=TF\circ \frac{\partial}{\partial t}-\frac{\partial}{\partial t}\circ F= TF\circ \xi-\xi\circ F,\]
with $\partial/\partial t\in \chi^\infty(L\times \mathbb R)$ the canonical unit vector field associated to $\mathbb R$.

\paragraph{Local version}
We first prove the existence of $\xi$ locally:
\begin{prop}\label{local:version}
Let $\hat P_1$ be a small plaque of $\mathcal L$ such that $\hat P_{i+1}:= f^{-i}(\hat P_1)$ is disjoint from $\hat P_1$ for every $i> 0$.
Let $P_1$ be a precompact plaque whose closure is included in $\hat P_1$. Let  $P_{i+1}:= f^{-i}(P_1)$, for every $i\ge 0$. 

Let $s> 0$ be such that $\hat  P_{s}$ is non-empty.
Then $(\hat P_i)_{i=1}^s$ consists of manifolds. Moreover, there exists a family of disjoint open neighborhoods $(U_i)_{i=1}^s$ of $(P_i)_{i=1}^s$ s.t. $f$ sends $\Omega':= \coprod_{i=2}^{s} U_i$ into $\Omega:= \coprod_{i=1}^s U_i$, and s.t.  
for any deformation $F$ $CO^\infty$-close to $F_{0}$, 
there exists $\xi\in \chi^\infty (\mathcal L\times \mathbb R)^0$ which is $\mathcal W^\infty$-close to zero with $\mathbb R$-component equal to $0$ and such that :
\[\tau_F(x,t)=TF\circ \xi(x,t)-\xi\circ F(x,t),\quad \forall x\in \Omega',\; t\in \mathbb B.\]\end{prop}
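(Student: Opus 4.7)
First I would establish the combinatorial structure of the chain. From $\hat P_i=f^{-(i-1)}(\hat P_1)$ and the assumption $\hat P_{j+1}\cap \hat P_1=\emptyset$ for all $j>0$, pairwise disjointness of $\hat P_1,\dots,\hat P_s$ follows: if $x\in \hat P_i\cap \hat P_j$ with $i<j$, then $f^{i-1}(x)\in \hat P_1\cap \hat P_{j-i+1}$, contradicting the hypothesis. Transversal bijectivity of $f$ (in local charts the transverse coordinate transforms bijectively) implies each $\hat P_i$ is a smooth manifold of the same leaf-dimension as $\hat P_1$, since in a chart it is cut out as an open subset of a single transverse slice. Using compactness of $\overline{P_i}$ and continuity of $f$, I would shrink progressively to obtain disjoint precompact neighborhoods $P_i\Subset U_i\subset \hat P_i$ with $f(\overline{U_{i+1}})\subset U_i$, producing the finite chain $\Omega'=\coprod_{i=2}^s U_i \xrightarrow{f} \Omega=\coprod_{i=1}^s U_i$.

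Writing $F(x,t)=(f_t(x),t)$ and, since the $\mathbb R$-component of $\xi$ is zero, $\xi(x,t)=(\eta_t(x),0)$, the equation $\tau_F=TF\circ \xi-\xi\circ F$ reduces on the leaf part to
\[\partial_t f_t(x)=Tf_t(\eta_t(x))-\eta_t(f_t(x)),\qquad (x,t)\in \Omega'\times B.\]
The plan is to construct $\eta_t$ as a cutoff of a global solution produced by hypothesis~$\mathcal I$ of Theorem~\ref{equi}. Applying $\mathcal I$ at $t=0$ to the section $\partial_t f_t|_{t=0}\in\chi^\infty(f)$ yields $\eta_0^{\mathrm{glob}}\in\chi^\infty(\mathcal L)$ solving $Tf\,\eta_0^{\mathrm{glob}}-\eta_0^{\mathrm{glob}}\circ f=\partial_t f_t|_{t=0}$. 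The parametric extension in $t$ is obtained by perturbation theory: the continuous linear surjection $\Lambda\colon \sigma\mapsto Tf\,\sigma-\sigma\circ f$ between Fr\'echet spaces $\chi^\infty(\mathcal L)\to \chi^\infty(f)$ admits a continuous right inverse $\Lambda^\dagger$, and the perturbed maps $\Lambda_t\colon \sigma\mapsto Tf_t\,\sigma-\sigma\circ f_t$ are $CO^\infty$-small perturbations of $\Lambda$ for $F$ close to $F_0$; composing $\Lambda^\dagger$ with the inverse of $I+\Lambda^\dagger\circ(\Lambda_t-\Lambda)$ supplies a continuous right inverse $\Lambda_t^\dagger$ smooth in $t$, giving $\eta_t^{\mathrm{glob}}:=\Lambda_t^\dagger(\partial_t f_t)$.

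Finally I would localize via cutoff. Pick $\rho\in C^\infty(\mathcal L,[0,1])$ equal to $1$ on a neighborhood of $\overline{\Omega}$ and compactly supported in a slightly larger neighborhood disjoint from the remaining backward iterates of $\hat P_1$. Define $\xi(x,t):=(\rho(x)\,\eta_t^{\mathrm{glob}}(x),0)\in \chi^\infty(\mathcal L\times \mathbb R)^0$. Since $F(\Omega'\times B)\subset \Omega\times B$ and $\rho\equiv 1$ on $\Omega$, multiplying by $\rho$ does not alter the equation, which therefore holds on $\Omega'\times B$. As $F\to F_0$ in $CO^\infty$, $\partial_t f_t\to 0$ in $CO^\infty$ on compacts; continuity of $\Lambda_t^\dagger$ propagates this to $\eta_t^{\mathrm{glob}}\to 0$ in $C^\infty$ on compacts, and the compact support of $\rho$ upgrades smallness on compacts to $\mathcal W^\infty$-smallness of $\xi$.

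The main obstacle is the parametric extension of infinitesimal stability: establishing that $\Lambda_t$ remains surjective with a continuous right inverse varying smoothly in $t$ as $F$ varies near $F_0$. This is a functional-analytic statement resting on the open mapping theorem and stability of right-invertibility under small Fr\'echet-space perturbations. A direct backward recursion along the chain, initializing $\eta_t|_{U_s}=0$ and propagating via the equation, is tempting but fails in general because singularities of $f_t$ render the formula $\eta_t(f_t(x),t)=Tf_t\eta_t(x,t)-\partial_t f_t(x)$ multi-valued at image points; the global solution sidesteps this difficulty by construction.
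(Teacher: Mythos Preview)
Your argument has a genuine gap at the parametric extension step. You assert that the continuous linear surjection $\Lambda\colon\chi^\infty(\mathcal L)\to\chi^\infty(f)$ admits a continuous right inverse $\Lambda^\dagger$, appealing to the open mapping theorem. But the open mapping theorem for Fr\'echet spaces only guarantees that $\Lambda$ is open; it does \emph{not} produce a continuous linear (or even continuous) right inverse. Such a right inverse would require the kernel of $\Lambda$ to be complemented, which fails in general for Fr\'echet spaces of smooth functions. Your subsequent Neumann series argument, inverting $I+\Lambda^\dagger(\Lambda_t-\Lambda)$, compounds the problem: in the absence of a single norm there is no operator smallness controlling convergence of the series, and $CO^\infty$-closeness of $f_t$ to $f$ does not translate into any usable smallness of $\Lambda_t-\Lambda$ as an operator between these Fr\'echet spaces.

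This is precisely the obstacle that the paper's proof is built to circumvent. Rather than soft functional analysis, the paper works on the finite chain $\coprod_i\hat P_i$ and passes through a hierarchy of module problems: from surjectivity of $\alpha'$ (no parameters, which follows from $\mathcal I$), to $\alpha^t$ (germs at a time slice), to $\tilde\alpha$ (smooth in $t$), and finally to $\alpha^X$ (germs in the deformation variable $F$) together with $\alpha^X(I^X\cdot N^X)=I^X\cdot M^X$, which is what yields both existence and the smallness of $\xi$ as $F\to F_0$. Each nontrivial step is an application of Mather's algebraic theorem, whose input is that the ring homomorphisms $\phi_i^t$ and $\phi_i^X$ are \emph{adequate} --- a deep fact resting on the Malgrange preparation theorem. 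Your proposal replaces this entire machinery by an appeal to right-invertibility that is simply not available; the backward recursion you correctly reject at the end is in fact closer in spirit to what must be done, but done module-theoretically rather than pointwise.
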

\begin{proof}
The fact that $(\hat P_i)_i$ is a family of manifolds follows from the transverse bijectivity of $f$.

The main interest of the proof is the usefulness of the algebraic tools developed by Mather (\cite{Ma1}, \cite{Ma2}, \cite{Ma3}) and very well written by Tougeron \cite{Tou}.

Let $R_i'$ be the ring $C^\infty(\hat P_i)$ of smooth real functions on $\hat P_i$. 
For $t\in \mathbb R$, let $R_i^t$ be the ring $C^\infty_{\hat P_i\times \{t\}} (\hat P_i\times \mathbb R)$ of smooth germs at $\hat P_i\times \{t\}$ of smooth real functions on $\hat P_i\times \mathbb R$. 
We notice that $R_i'$ is isomorphic to the quotient $R_i^t/I_i^t$, where $I_i^t$ is the ideal of $R_i^t$ formed by the germs equal to $0$ on $\hat P_i\times \{t\}$. 
For $i\in \{1,\cdots , s-1\}$, we notice that the map 
\[\phi_i^t:\; R_{i}^t\rightarrow R_{i+1}^t\]
\[\rho\mapsto \rho \circ F_{0|\hat P_{i+1}\times \mathbb R}\]
is a ring morphism that satisfies:
\[\phi_i^t(I_{i}^t)\subset I_{i+1}^t.\]
Also, the morphism $\phi_i^t$ induces on the quotient $R'_{i}\cong R_{i}^t/I_{i}^t\rightarrow R'_{i+1}\cong R_{i+1}^t/I_{i+1}^t$ the following ring morphism:
\[ \phi'_i:\; R'_{i}\rightarrow R'_{i+1}\]
\[\rho\mapsto \rho\circ f.\]
Let $\tilde R_i$ be the ring $C^\infty (\hat P_i\times \mathbb R)$ formed by the smooth real functions on $\hat P_i\times \mathbb R$.

Let $X$ be in the space of smooth deformations $F:\; \coprod_{i=2}^s \hat P_i \times \mathbb R\rightarrow \coprod_{i=1}^s \hat P_i \times \mathbb R$ of $f_{|\coprod_{i=2}^s \hat P_i}$ endowed with the Whitney topology.

Let $R_i^X$ be the ring $C^0_{F_0} (X,C^\infty(\hat P_i\times \mathbb R))$ formed by the germs at $F_{0|\coprod_{i=2}^s \hat P_i\times \mathbb R}$ of continuous maps from 
$X$ into the space of 
smooth real functions on $\hat P_i\times \mathbb R$.
We notice that $\tilde R_i$ is isomorphic to the quotient $R_i^X/I_i^X$ where $I_i^X$ denotes the ideal formed by the germs that vanish at $F_{0|\coprod_{i=2}^s \hat P_i\times \mathbb R}$. 
For $i< s$, we notice that the map
\[\phi_i^X:\; R_i^X\rightarrow R_{i+1}^X\]
\[(\rho_F)_{F\in X}\mapsto (\rho_F\circ F_{|\hat P_{i+1}\times \mathbb R})_{F\in X}\]
is a ring homomorphism that satisfies 
\[\phi_i^X(I_{i}^X)\subset I_{i+1}^X.\]
Also, the homomorphism $\phi_i^X$ induces on the quotient $\tilde R_i\cong R_i^X/I_i^X\rightarrow \tilde R_{i+1}\cong R_{i+1}^X/I_{i+1}^X$ the following ring homomorphism:
\[\tilde \phi_i:\; \tilde R_i\rightarrow \tilde R_{i+1}\]
\[\rho\mapsto \rho\circ F_{0|\hat P_{i+1}\times \mathbb R}.\]
Let us give the formulation of the problem in these algebraic settings.
For $i\in \{1,\dots, s\}$, we denote by:\begin{itemize}
\item $N_i'$ the $R_i'$-module $\chi^\infty(\hat P_i)$ of smooth vector fields on $\hat P_i$.
\item $N_i^t$ the $R_i^t$-module $\chi_{\hat P_i\times \{t\}}^\infty(\hat P_i\times \mathbb R)^0$ of smooth germs of vector fields on $\hat P_i\times \mathbb R$ at $\hat P_i\times \{t\}$ with $\mathbb R$-component equal to zero. 
\item $\tilde N_i$ the $\tilde R_i$-module $\chi^\infty(\hat P_i\times \mathbb R)^0$ of smooth vector fields on $\hat P_i \times \mathbb R$ with $\mathbb R$-component equal to zero.
\item $N_i^X$ the $R_i^X$-module $C^0_{F_0}(X,\chi^\infty(\hat P_i\times \mathbb R)^0)$ of germs at $F_{0|\coprod_{i=2}^s \hat P_i\times \mathbb R}$ of continuous functions from $X$ into $\chi^\infty (\hat P_i\times \mathbb R)^0$.\end{itemize}
For $i\in\{2,\dots , s\}$, let us denote by:
\begin{itemize}
\item 
$M_i'$ the $R_i'$-module $\chi^\infty (f_{|\hat P_{i}})$ of smooth vector fields along $f_{|\hat P_{i}}$ ({\it i.e.} of smooth sections of $f^*_{|\hat P_i}T\hat P_{i-1}$),
\item $M_i^t$ be the $R_i^t$-modules $\chi^\infty_{\hat P_i\times \{t\}}(F_{0|\hat P_i\times \mathbb R})^0$ of germs at $\hat P_i\times \{t\}$ 
of vector fields along $F_{0|\hat P_i\times \mathbb R}$ with $\mathbb R$-component equal to zero. 
\item $\tilde M_i$ be the $\tilde R_i$-module $\chi^\infty (F_{0|\hat P_i\times \mathbb R})^0$ of smooth vector fields along 
$F_{0|\hat P_i\times \mathbb R}$ with $\mathbb R$-component equal to zero.
\item $M_i^X$ be the $R_i^X$-module of germs at $x_0$ of continuous sections $\sigma$ of the trivial bundle $X\times \chi^\infty (\hat P_i\times \hat P_{i-1}\times \mathbb R)\rightarrow X$, such that  $\sigma(F)$ belongs to $\chi^\infty(F_{|\hat  P_i\times \mathbb R})^0$.  
\end{itemize}
For $i\in \{2,\dots, s\}$, the following maps are homomorphisms of respectively $R_i'$, $R_i^t$, $\tilde R_i$ and $R_i^X$-modules:
\[\begin{array}{cc}
\alpha'_{i,i}:\; N_i'\rightarrow M_{i}'& \alpha^t_{i,i}:\; N_i^t\rightarrow M_{i}^t\\
\xi\mapsto Tf\circ \xi & \xi\mapsto TF^0\circ \xi\\
&\\
\tilde \alpha_{i,i}:\; \tilde N_i\rightarrow \tilde M_{i}&\alpha^X_{i,i}:\; N_i^X\rightarrow M_{i}^X\\
\xi\mapsto TF^0\circ \xi & \quad(\xi_F)_F\mapsto (TF\circ \xi_F)_{F\in X}
\end{array}\]
Also for $i\in \{1,\dots, s-1\}$, the following maps are module homomorphisms over $\phi_i'$, $\phi_i^t$, $\tilde \phi_i$ and $\phi_i^X$ respectively:

\[\begin{array}{cc}
\alpha'_{i,i+1}:\; N_i'\rightarrow M_{i+1}' & \alpha^t_{i,i+1}:\; N_i^t\rightarrow M_{i+1}^t\\
\xi\mapsto \xi\circ f_{|\hat  P_{i+1}} & \xi\mapsto \xi\circ F^0_{|\hat P_{i+1}\times \mathbb R}\\
&\\
\tilde \alpha_{i,i+1}:\; \tilde N_i\rightarrow \tilde M_{i+1} & \alpha^X_{i,i+1}:\; N_i^X\rightarrow M_{i+1}^X\\
\xi\mapsto \xi\circ F^0_{|\hat P_{i+1}\times \mathbb R} & \quad (\xi_F)_F\mapsto (\xi_F\circ F_{|\hat  P_{i+1}})_{F\in X}
\end{array}\]

For $\delta\in \{',t,\tilde\;,X\}$, we denote by 
\[M^\delta:=\bigoplus_{i=2}^s M_i^\delta\qquad \mathrm{and}\qquad N^\delta:=\bigoplus_{i=1}^s N_i^\delta\]
the Abelian sum of the modules $(M_i^\delta)_{i=2}^s$ and $(N_i^\delta)_{i=1}^s$ respectively. The modules $M^\delta$ and $N^\delta$ are modules over the rings 
$A:=\bigoplus_{i=2}^s R_i^\delta$ and
$B:=\bigoplus_{i=1}^s R_i^\delta$ respectively.
Let $I\cdot M^\delta$ (resp. $I\cdot N^\delta$) be the submodule of $M^\delta$ (resp. $N^\delta$) spanned by elements of the form $i_k\cdot x_k$, with $i_k\in I_k^\delta$, $x_k\in M_k^\delta$ (resp. $x_k\in N_k^\delta$) and $k\in \{2,\dots ,s\}$ (resp. $k\in \{1,\dots ,s\}$).
Let us consider the (additive) group morphism:
\[\alpha^\delta:\; N^\delta\rightarrow M^\delta\]
\[(\xi_i)_{i=1}^s\mapsto \big(\alpha_{i,i}^\delta(\xi_i)-\alpha_{i-1,i}^\delta(\xi_{i-1})\big)_{i=2}^s.\]
Let us show that the infinitesimal hypothesis implies the surjectivity of the map $\alpha'$:
for every $\zeta\in \chi^\infty (f_{|\coprod_{i=2}^s \hat P_i})$, we can find a smooth function $r:\; \coprod_{i=1}^s \hat P_i\rightarrow \mathbb (0,+\infty)$, which is $f$-invariant ($\forall x\in \coprod_{i=2}^s \hat P_i$, $r\circ f(x)=r(x)$) and such that $r\cdot \zeta$ can be extended to a smooth section $\zeta'$ of $f^*T\mathcal L$. 
Let $\xi'$ be a vector field on $\mathcal L$ such that:
\[\xi'\circ f-Tf\circ \xi'=\zeta'.\]
Let $\xi'':=-\frac{1}{r}\cdot \xi'_{|\coprod_{i=1}^s \hat P_i}$. We notice that $\xi'' $ is a smooth vector field on $\coprod_{i=1}^s \hat P_i$. Also:
\[Tf\circ \xi''-\xi''\circ f=\frac{1}{r}(\xi'\circ f-Tf\circ \xi')=\frac{\zeta'}{r}=\zeta, \quad \mathrm{on\;} \coprod_{i=2}^s \hat P_i.\]
Let us show that the proposition is proved if we show that $\alpha^X$ sends $I^X\cdot N^X$ onto $I^X\cdot M^X$.

For $F$ in $X$, let $\tau_{F}:=T\pi\circ TF\circ \frac{\partial}{\partial t}$.
We notice that $(\tau_{F})_{F\in X}$ is an element of $I^X\cdot M^X$. 
Thus, we have the existence of $(\xi_F)_{F\in X}\in I^X\cdot N^X$ and of a neighborhood $X'$ of $F_{0|\coprod_{i=2}^s \hat P_i}$ in $X$ such that 
for every $F\in X'$, we have on $\coprod_{i=2}^s\hat P_i$:
\[\tau_{F}:=TF\circ \xi_F-\xi_F\circ F.\]

Let us construct $(U_i)_i$. Let $\hat  T$ be a locally compact metric space such that a small neighborhood $\hat U_1$ of $\hat P_1$ is isomorphic to the product $\hat P_1\times \hat T$. 
Let $\tau_0\in \hat T$ s.t. this isomorphism sends $\hat P_1$ to $\hat P_1\times \{\tau_0\}$. 
Since $f$ is transversally bijective, for $\hat U_1$ small enough, we notice that a neighborhood $\hat U_i$ of $\hat P_i$ is canonically isomorphic to $\hat P_i\times \hat T$, and this isomorphism sends $\hat P_i$ onto $\hat P_i\times \{\tau_0\}$. For $\hat T$ sufficiently small the open subsets $(\hat U_i)_i$ are disjoint.
Let $\hat \Omega:=\coprod_{i=1}^s \hat U_i$ and $\hat \Omega':=\coprod_{i=2}^s \hat U_i$. 
Let $F_0: L\times \mathbb R\rightarrow L\times \mathbb R$ be the trivial deformation of $f$.

Let $\exp$ be the exponential map associated to a complete metric on $\coprod_{i=1}^s \hat P_i$. Let $r\in C^\infty (\coprod_{i=1}^s \hat P_i\times \mathbb R, \mathbb R)$ be a compactly supported function equal to 1 on a neighborhood of $\coprod_{i=1}^s P_i\times B$. There exists a $CO^\infty$-neighborhood $V_F$ of the trivial deformations $F^0$ of $f$ and a neighborhood $T$ of $\tau_0\in \hat T$ such that for every deformation $F\in V_F$ and $\tau\in T$ the following map is well defined:
\[F_\tau:\; \coprod_{i=2}^s \hat P_i\times \mathbb R\rightarrow \coprod_{i=1}^s \hat P_i \times \mathbb R\]
\[ (x,t)\mapsto \left[ \begin{array}{cl} \Big(\exp_{f(x,\tau_0)}\big( r(x,t)\cdot \exp^{-1}_{f(x,\tau_0)} \big(F(x,\tau,t)\big)\big), t\Big)& \mathrm{if}\; r(x,t)\not=0\\
   \big( f(x,\tau_0), t)& \mathrm{else}\end{array}\right.\]    
with in particular the restriction of $F$ to $\coprod_{i=2}^s P_i\times \{\tau \}\times \mathbb R$ canonically identitified to a map from $\coprod_{i=2}^s P_i\times \mathbb R$ into $\coprod_{i=1}^s \hat P_i \times \mathbb R$. Also, when $F$ is $CO^\infty$-close to $F_0$ and $\tau $ is close to $\tau_0$, then $F_\tau$ is $\mathcal W^\infty$-close to $F_{0|\coprod_{i=2}^s \hat P_i\times \mathbb R}$. We suppose $V_F$ and $T$ sufficiently small such that  $F_\tau$ belongs to $X'$ and such that $r\circ F_\tau$ is equal to 1 on $\coprod_{i=2}^s P_i\times B$, for every $F\in V_F$ and $\tau \in T$.  

Let $\rho\in C^0(\hat T,\mathbb R)$ be a function equal to $1$ on a neighborhood $T'$ of $\tau^0\in T$ and to 0 off $T$. For $F\in V_F$, let $U_i:= P_i\times T'$, $\Omega:= \coprod_{i=1}^s U_i$, $\Omega':= \coprod_{i=2}^s U_i$ and:
\[\xi: = z\in L\times \mathbb R\mapsto \left\{\begin{array}{cl} \rho(\tau)\cdot r(x,t)\cdot \xi_{F^\tau}(x,t) & \mathrm{if} \; z= (x,\tau, t)\in \coprod_{i=1}^s \hat P_i \times T\times \mathbb R\\
0& \mathrm{else}\end{array}\right..\]
We notice that $\xi$ belongs to $\chi^\infty (\mathcal L\times \mathbb R)^0$ and that we have on $\Omega'$:
\[\tau_F:= T\pi \circ TF\circ \frac{\partial}{\partial t} = TF\circ \xi -\xi\circ F.\]

Also when $F$ is $CO^\infty $-close to $F_0$, then $\xi$ is $\mathcal W^\infty$-small.
Hence the proposition is shown.

The proof that the surjectivity of all $(\alpha^t)_{t\in \mathbb R}$, implies the surjectivity of $\tilde \alpha$ is easy. It will be done at the end.

To show that the surjectivity of $\alpha'$ implies the one of $\alpha^t$, and that the one of $\tilde \alpha$ implies the one of $\alpha^X$ and that $\alpha^X(I^X\cdot N^X)=I^X\cdot M^X$, we shall use the following techniques of Mather.

\paragraph{ The algebraic Machinery}
Let $R$ and $S$ be rings with units. Let $I$ and $J$ be \emph{Jacobson ideals} (this means that for every $z\in J$, the element $1+z$ is invertible) in $R$ and $S$ respectively. Let 
$\phi:\; R\rightarrow S$
be a ring homomorphism which sends $I$ into $J$.
\begin{defi} The homomorphism $\phi:\; (R,I)\rightarrow (S,J)$ is \emph{adequate} if the following condition is satisfied:
Let $A$ be a finitely generated $R$-module. Let $B$ and $C$ be $S$-modules, with $C$ finitely generated over $S$.
Let $\beta:\; B\rightarrow C$ be a homomorphism of $S$-modules.
Let $\alpha:\; A\rightarrow C$ be a homomorphism over $\phi$ ({\it i.e.} 
$\alpha(a+b)=\alpha(a)+\alpha(b)$ and $\alpha(r\cdot a)=\phi(r)\cdot \alpha(a)$, for $a,b\in A$ and $r\in R$).
Suppose that:
\[\alpha(A)+\beta(B)+J\cdot C=C.\]
Then we can conclude
\[\alpha(A)+\beta(B)=C\qquad \mathrm{
and}\qquad  
\alpha(I\cdot A)+\beta(J\cdot B)=J\cdot C.\]
\end{defi}
Let us illustrate the above definition by the following non-trivial examples shown by Mather in \cite{Ma1}-\cite{Ma2}-\cite{Ma3} and rewritten in this form by Tougeron \cite{Tou}:
\begin{thm}\label{ade1}
For any $i\in \{2,\dots , s\}$, the ring homomorphisms:
\[\phi_i^t:\; (R_i^t,I_i^t)\rightarrow (R_{i+1}^t, I_{i+1}^t)\]
and
\[\phi_i^X:\; (R_i^X,I_i^X)\rightarrow (R_{i+1}^X, I_{i+1}^X)\]
are adequate.\end{thm}
This is the algebraic theorem of Mather:
\begin{thm}[Mather] Let $(R_i,I_i)$, $i=1,\dots , s$ be rings with units where $I_i$ is a Jacobson ideal for every $i$.
Let 
\[(R_1,I_1)\stackrel{\phi_1}{\rightarrow} (R_2,I_2)\stackrel{\phi_2}{\rightarrow}\cdots \stackrel{\phi_{s-1}}{\rightarrow}(R_s,I_s)\]
be a sequence of adequate homomorphisms.
For every $i$, let $N_i$ and $M_i$ be $R_i$-modules, finitely generated (with possible exception for $N_s$). Put for $i<j$:
\[\phi_{ij}= \phi_j\circ \cdots \circ \phi_i\]
and $\phi_{ii}$ the identity of $R_i$.
For $j\ge i$, let $\alpha_{ij}:\; N_i\rightarrow M_j$ be a module-homomorphism over $\phi_{ij}$. 
Let $N:=\oplus_{i=1}^s N_i$ and $M:=\oplus_{i=1}^s M_i$ as the direct sums of Abelian groups and let 
\[\alpha:\; N\rightarrow M\]
be given by 
\[\alpha(\xi_1,\dots ,\xi_s)=
(\alpha_{11}(\xi_1), 
\alpha_{12}(\xi_{1})+\alpha_{22}(\xi_2), \dots,
\alpha_{1s}(\xi_1)+\dots + \alpha_{ss}(\xi_s).\]
Suppose that 
\begin{itemize}\item[(I)]
\qquad\qquad$\alpha(N)+\sum_{i=1}^sI_i M_i=M.$
\end{itemize}
Then:
\begin{itemize}\item[(II)]
\qquad\qquad $\alpha$ sends $N$ onto $M$.
\item[(III)]\qquad\qquad
Moreover $\alpha$ sends $\sum_{i=1}^s I_i\cdot N_i$ onto $\sum_{i=1}^s I_i\cdot M_i$.\end{itemize} 
\end{thm}
\begin{rema}
We can illustrate the morphism $\alpha$ by the following diagram:
\[\begin{array}{rcccccl}
(R_1,I_1)&\stackrel{\phi_1}{\rightarrow} &(R_2,I_2)&\stackrel{\phi_2}{\rightarrow}&\cdots &\stackrel{\phi_{n-1}}{\rightarrow}&(R_s,I_s)\\
&&&&&&\\
N_1 &\oplus &N_2 &\oplus &\cdots & \oplus & N_s \\
&&&&&&\\
\alpha_{11}\downarrow&\alpha_{12}\searrow& \alpha_{22}\downarrow& & \searrow\cdots \searrow&\searrow&\downarrow\alpha_{ss}\\
&&&&&&\\
M_1&\oplus&M_2&\oplus & \cdots &\oplus & M_s.\end{array}
\]
\end{rema}
\begin{rema} For $s=1$ this theorem is the Nakayama's lemma. For $s=3$ is was shown by F. Latour.\end{rema}
As the proof is purely algebraic, we will prove this theorem at the end of this work. Let us conclude the proof of the proposition.

If we omit the exponent $X,t,\;'\;,\tilde\;$, and we put $M_1=0$ and $\alpha_{ji}=0$ when $j< i-1$, it follows from the last theorem that the surjectivity of $\alpha'$ implies the one of $\alpha^t$, and that the surjectivity of $\tilde \alpha$ implies the one of $\alpha^X$ with 
$\alpha^X(I^XN^X)=I^XM^X$.

Thus it only remains to prove that the surjectivity of all $(\alpha^t)_{t\in \mathbb R}$ implies the one of $\alpha'$.
Let $\tau\in \chi^\infty (F_0)^0$. 
For $t\in \mathbb R$, as $\alpha^t$ is surjective, there exists a germ $\xi_t\in A_t$ such that $\tau_t:=\alpha_t(\xi_t)$. The germs $\xi_t$ is defined on a neighborhood $V_t$ of $\coprod_{i=1}^s \hat P_i\times \{t\}$ in $\coprod_{i=1}^s \hat P_i\times \mathbb R$. 
By shrinking a slice $\hat \Omega$ and then by shrinking $V_t$ for every $t\in \mathbb R$, we may suppose that $V_t$ is of the form $(\coprod_{i=1}^s \hat P_i)\times W_t$, where $W_t$ is a neighborhood of $t\in \mathbb R$. 
Let $(W_j)_{j\in \mathbb N}$ be a locally finite subcovering of $(W_t)_{t\in \mathbb R}$.
By locally finite we mean that there exists for every point $t\in \mathbb R$ a finite number of integers $j\in \mathbb N$ such that $W_{j}$ intersects a neighborhood of $t$. 
By subcovering we mean that for every $j\ge 0$ there exists $t_j$ s.t. $W_j$ is included in $W_{t_j}$. 
Let $(\rho_j)_j$ be a partition of the unity subordinate to $(W_j)_j$.
Let $\pi':\; \coprod_{i=1}^s \hat P_i\times \mathbb R\rightarrow \mathbb R$ be the projection on the second coordinate. We notice that:
\[\pi' \circ F_0=\pi'\quad \mathrm{on}\; \coprod_{i=2}^s \hat P_i\times \mathbb R,\]
since the map $F_0$ is a deformation of $f$.
Let $\xi:= \sum_{j=1}^\infty \rho_j\circ \pi'\cdot \xi_{t_i}$.
We notice that $\xi$ is sent by $\alpha'$ to $\tau$. This concludes the proof of the proposition.
\end{proof}

\begin{rema}\label{rem pour 1.7} We notice that one easily simplifies the above proof to show that under the hypotheses of Theorem \ref{1.7} of Mather, for every deformation $(F_{ij})_{[i,j]\in A}$ which is $\mathcal W^\infty$-close to the trivial deformation of $(f_{ij})_{[i,j]\in A}$ there exists a $\mathcal W^\infty$-small vector vector field $\xi\in \chi^\infty(\coprod_{i\in V} M_i\times \mathbb R)^0$ such that 
\[(TF_{ij}\circ \xi -\xi\circ f_{ij})_{|M_i}= TF_{ij}\circ \frac{\partial}{\partial t} - \frac{\partial}{\partial t} \circ F_{ij|M_i},\]
for every $[i,j]\in A$.

Thus by using  Remark \ref{pour 1.7 aussi} with $C$ the disjoint union of the manifold from which an arrow start, we get the proof of the Theorem \ref{1.7}.
\end{rema}
\subsection{From local to global: proof of Theorem $\ref{fonda}$}

Let $f$ be an endomorphism of $\mathcal L$ and  let $C$ be a compact subset of $L$  as in the statement of the theorem. 
We notice that by compactness of $C$, there exists a compact neighborhood $V$ of $K$ s.t. for every $x\in V$ some iterate $f^n(x)$ does not belong to $V$, for $n\in \mathbb N$. Let $W:= f(V)\setminus int(V)$. For every $p\in  W$, we define $(p_i)_{i\ge 1}$ inductively: $P:=\{p\}$; $P_{i+1}:= f^{-1}(P_i)\cap V$. We notice that  $P_N$ is empty. Let $s_p$ be maximal s.t. $P_{s_p}$ is not empty. We define $(\hat P_i^p)_{i=1}^{s_p}$ inductively: $\hat P_1^p$ is a small plaque that contains $p$ and $\hat P_{i+1}^p:= f^{-1}(\hat P_1^p)$. We notice that for $\hat P_1^p$ sufficiently small, $\hat P_i^p$ is disjoint from $\hat P_1^p$ (else we would have a cycle in $V$). Let $P_1^p$ be a precompact neighborhood of $P_1^p$ and let  $(U_i^p)_{i=1}^{s_p}$ be the open subsets provided by Proposition \ref{local:version}.

 Also, by shrinking if necessarily, we may suppose that the closure of $U_1^p$ does not intersect $C$.
As $W$ is compact, we can extract from $(U_1^p)_{p\in C}$ a finite subcover $( U^{p_j}_1)_{j\ge 1}$ of $W$.
As $W$ is compact and $\cap_{k=0}^N f^{-n}(V)$ is empty, we notice that for every $f'$ $CO^\infty$-close enough to $f$, for every $x\in C$, there exists an integer $n\ge 1$, s.t.: 
\[f^n(x)\in \Delta:=\cup_{j\ge 1} \breve U_1^{p_j}.\]

For every $CO^\infty$-small deformation $F$ of $f$, for every $j$, there exists  $\xi_j\in \chi^\infty (\cup_{i=1}^{s_j} U_i^{p_j}\times \mathbb R)^0$, such that 
\[\tau_{F}= TF\circ \xi_j- \xi_j\circ F,\quad  \mathrm{on}\; \cup_{i=2}^{s_j}U_i^{p_j}\times \mathbb R\]
Let $(\rho_1^j)_{j=1}^N$ be a partition of the unity subordinate to $(U_1^{p_j})_{j}$. 
\[\mathrm{Let}\; \rho_2^j= (x,t)\in L\times \mathbb R\mapsto \left\{ \begin{array}{cl}
\rho_1^j\circ p_1\circ F^i(x,t)& (x,t)\in U_i^{p_j}\times \mathbb R\\
0 & \mathrm{else}\end{array}\right.,\]
with $p_1:\; L\times \mathbb R\rightarrow L$ the canonical projection.

Let $R:= \sum_{j=1}^N \rho_2^j$. Since $(U_i^{p_j})_{i,j}$ is finite,
the function $R$ is well defined and is a smooth morphism of $\mathcal L\times \mathbb R$ into $\mathbb R$.
Let $\rho_j:= \frac{\rho_2^j}{R}$ and $\xi:= x\mapsto \sum_{j=1}^N \rho_j(x)\xi_j(x)$. As $C$ is disjoint from the closure of $\cup_{j=1}^N U_1^{p_j}$ we have for all $(x,t)\in C\times \mathbb R$, 
\[\tau_F(x,t)=TF\circ \xi(x,t)-\xi\circ F(x,t).\] 

\subsection{Infinitesimal stability for manifold implies infinitesimal stability for embedded lamination}\label{the section}
We recall that to prove Theorem \ref{fonda}, we only used the surjectivity of the map:
\[\chi^\infty (\coprod_{i=1}^s P_i)\rightarrow \chi^\infty (f_{|\coprod_{i=2}^s P_i})\]
\[\sigma\mapsto \sigma\circ f-Tf\circ \sigma.\]
But this surjectivity is an easy consequence of the infinitesimal stability of $f$ stated in hypothesis $(ii)$:
for every $\tau\in \chi^\infty (f_{|\coprod_{i=2}^s P_i})$, we can find a smooth function $r:\; \coprod_{i=1}^s P_i\rightarrow \mathbb (0,+\infty)$, which is $f$-invariant ($\forall x\in \coprod_{i=2}^s P_i$, $r\circ f(x)=r(x)$) and such that $r\cdot \tau$ can be extended to a smooth section $\tau'$ of $f^*TM$. 
Let $\xi'$ be a vector field on $M\setminus \hat \Omega$ such that on this domain:
\[\xi'\circ f-Tf\circ \xi'=\tau'\]
Let $\xi'':=\frac{1}{r}\cdot \xi'_{|\coprod_{i=1}^s P_i}$. We notice that $\xi'' $ is a smooth vector field on $\coprod_{i=1}^s P_i$. Also
\[\xi''\circ f -Tf\circ \xi''=\frac{1}{r}(\xi'-Tf\circ \xi')=\frac{\tau'}{r}=\tau\]
Now we have to transform $\xi'' $ to a vector field tangent to $\coprod_{i=1}^s P_i$.
Let $N_1\rightarrow P_1$ be the smooth vector bundle whose fiber at $x\in P_1$ is $T_xP_1^\bot$.
Let $N_i\rightarrow P_i$ be the smooth vector bundle whose fiber at $x\in P_i$ is $T_xf^{-i+1}(T_{f^{i-1}(x)}P_1^\bot)$.
By transversality of $f$ to $\mathcal L$, $P_1$ is also a smooth vector bundle.
Let $\pi$ be the projection of $TM_{|\coprod_{i=1}^s P_i}$ onto $T(\coprod_{i=1}^s P_i)$ parallelly to $\coprod_i N_i\rightarrow \coprod_{i=1}^s P_i$.
We notice that $\xi:= \pi\circ \xi'' $ satisfies the requested properties.

\subsection{Proof of Mather's algebraic theorem}
The proof of this theorem comes from an unpublished manuscript of Mather. It was then rewritten by Baas in 
a manuscript as well unpublished. Here we only copy the last proof.
The first step is to show that it is sufficient to prove the theorem for $M_1=0$. Let us assume that this has been proved and from this prove the theorem.
Put
\[ M'=\bigoplus_{i=2}^{s} M_i\]
and 
\[\alpha'\;:\; N\rightarrow M'\]
is given by 
\[\alpha'(n_1,\dots , n_s)=(\alpha_{12}(n_1)+\alpha_{22}(n_2),\dots , \alpha_{1s}(n_{1})+\cdots + \alpha_{ss}(n_s)).\]
The hypothesis $(I)$ clearly implies:
\begin{itemize}\item[(I')]
\qquad\qquad$\alpha'(N)+\sum_{i=2}^{s}I_iM_i=M'.$
\end{itemize}
Then the special case of the theorem with $M_1=0$ which we assume gives: 
\[\alpha'(N)=M'\quad \mathrm{and}\quad \alpha'\Big(\sum_{i=1}^s I_i\cdot N_i\Big)= \sum_{i=2}^{s} I_i\cdot M_i.\]
\[\mathrm{Let} 
\quad N_1'=(\alpha_{12}+ \cdots + \alpha_{1,s})^{-1}\alpha\Big(\sum_{i=2}^{s} N_i\Big).\]
It is now sufficient to prove 
\[ \alpha_{11}(N'_1)=M_1.\]
This would give $(I)$ and also imply:
\[\alpha_{11}(I_1N'_1)=I_1M_1\]
which together with
\[(\alpha_{12}+\cdots +\alpha_{1s})(I_1\cdot N_1')\subset \alpha\Big(\sum_{i=2}^{s} I_i\cdot N_i\Big)\]
gives $(III)$.
Let us prove that 
\[\alpha_{11}(N_1')=M_1\]
follows from this implication of $(III')$: 
\[M=\alpha(N)+\alpha'\Big(\sum_{i=1}^s I_i\cdot N_i\Big)+I_1 \cdot M_1=
\alpha\Big(\sum_{i=2}^{s}
N_i\Big) +\alpha'(I_1\cdot N_1)+ \alpha(N_1)+I_1\cdot M_1.\]
For $m_1\in M_1$, there exist $n_1'\in I_1\cdot N_1$, $m_1'\in I_1\cdot M_1$ and $n_i\in N_i$, for $i\in \{1,\dots , s\}$, such that:
\[m_1\oplus 0\oplus\cdots \oplus 0=
\alpha(n_2+\cdots + n_{s})+\alpha'(n_1')+\alpha(n_1)+(m_1'\oplus 0\cdot \oplus 0).\]
Componentwise this gives:
\begin{eqnarray}
b_1=\alpha_{11}(n_1)+m_1'\\
0=(\alpha_{1s}+\cdots +\alpha_{2s})(n_1)+(\alpha_{12}+\cdots +\alpha_{1s})(n_1')+\alpha(n_2+\cdots +n_{s}).
\end{eqnarray}
From the second equation, it follows that the point $n_1+n_1'$ belongs to $N_1'$. Hence:
\[m_1=\alpha_{11}(n_1+n_1')+(m'_1-\alpha_{11}(n'_1))\in \alpha_{11}(N'_1)+I_1M_1.\]
So we have shown
\[ M_1=\alpha_{11}(N'_1)+I_1M_1\]
and by applying Nakayama's Lemma, we get:
\[\alpha_{11}(N'_1)=M_1.\]
And this finishes the proof of the theorem, assuming its holds for $M_1=0$.
So the next step is to prove the theorem for $M_1=0$. This is done by induction on $s$. For $s=1$, the theorem is trivial.
So assume the theorem inductively for $s-1$.\\
Put 
\[N_1^*=N_1\bigotimes_{R_1} R_{2}\]
where $R_{2}$ is regarded as an $R_1$-module via $\phi_{1}$. 
Let 
\[\alpha_{1j}^*:= \alpha_{1j}\otimes\phi_{2j}:\; N_1^*\rightarrow M_j,\; j\ge 2\]
and 
\[N^*= N_2\oplus \cdots \oplus N_{s}\oplus N_1^*\]
\[M=M_2\oplus \cdots \oplus M_{s}\]
Define 
\[\alpha^*:\; N^*\rightarrow M\]
by
\[\alpha^*(n_1,\dots ,n_s)=\Big(
\alpha_{22}(n_{2})+\alpha_{12}^*(n_1),
\dots ,
\alpha_{1s}^*(n_1)+\cdots +\alpha_{ss}(n_s)
\Big)\]
Clearly $N_1^* $ is a finitely generated $R_{2}$-module and $\alpha_{1j}^*$ is a homomorphism over $\phi_{1j}$.
Now $(I)$ implies 
\[\alpha^*(N^*)+\sum_{i=2}^{s}I_iM_i=M.\]
And by the induction hypothesis we conclude
\begin{itemize}\item[$(II^*)$]
\qquad\qquad $\alpha^*$ sends $N^*$ onto $M$.
\item[$(III^*)$]\qquad\qquad
Moreover $\alpha^*$ sends $\sum_{i=2}^s I_i\cdot N_i+ I_{2}N_1^*$ onto $\sum_{i=2}^s I_i\cdot M_i$.\end{itemize} 
Let 
\[\beta =id \otimes\phi_{1}:\; 
N_1=N_1\bigotimes_{R_1} R_1\rightarrow N_1^*= N_1\bigotimes_{R_1} R_{2}.\]
Then $\alpha^*_{1j}\circ \beta$ is equal to    $\alpha_{1j}$, for every $j$. .
Let 
\[C:= (\alpha_{12}^*+\cdots +\alpha_{1s}^*)^{-1}\cdot \alpha\Big(\sum_{i=2}^{s} N_i\Big)\]
and this is an $R_{2}$-submodule of $N_1^*$. Let $n^*\in M_1^*$ and $m=\alpha^*(n^*)\in M$. Then by our assumption $(I)$:
\[m=\alpha(n_1+\cdots +n_s)+m_2'+\cdots +m_{s}',\]
where $n_i\in N_i$ and $m_i'\in I_i\cdot M_i$.
By $(III^*)$
\[m_2'+\cdots +m_{s}'=\alpha^*(n_1^*+\cdots +n_s^*)\]
where $n_1^*\in I_{2}\cdot  N_1^*$, and $n_i^*\in I_i N_i$, for $i\ge 2$. Hence 
\[m=\alpha\Big( (n_1+(n_2+n_2^*)+\cdots +(n_{s}+n_{s}^*)\Big)+\alpha^*(n_1^*).\]
Therefore
\[\alpha^*(n^*-\beta(n_1)-n_1^*)=m-\alpha(n_1)-\alpha^*(n_1^*)=
\alpha\big((n_2+n_2^*)+\cdots + (n_{s}+n_{s}^*)).\]Put 
\[c=n^*-\beta(n_1)-n^*_1\in C\]
Since 
\[n^*=c+\beta(n_1)+n_1^*\in C+\beta(N_1)+I_{2}N_1^*\]
We get \[ N_1^*=C+\beta(N_1)+I_{2}N_1^*.\]
Now since $\phi_{2}$ is adequate we deduce
\begin{itemize}\item[$(II^{**})$]
\qquad\qquad $N_1^*=C+\beta(N_1)$.
\item[$(III^{**})$]
\qquad\qquad $I_{2}N_1^*=I_{2}C+\beta(I_1N_1)$.
\end{itemize}
Clearly $(II^*)$ and $(II^{**})$ give $(II)$ and $(III^*)$. Also $(III^*)$ and $(III^{**})$ give $(III)$. This finishes the proof of the theorem.
\begin{flushright}
$\square$
\end{flushright}
\subsection{Proof of Lemma \ref{trivial inf stab}}
\label{final lemma}
Let $\xi\in \chi^\infty(f_{|M'})$. 
By $(c)$ there exists $\sigma_0\in \chi(M')$ such that:
\[(\sigma_0\circ f-Tf\circ \sigma_0)_{|U}= \xi_{|U}.\]
By restricting a slice $U$ s.t. $(a)$ and $(b)$ are still satisfied, we may suppose that  $\sigma_0$ can be smoothly extended to $M$.  
  
      We define inductively $(\sigma_n)_{n\ge 0}\in \prod_n \chi(f_{|\cup_{k=0}^n f^k(U)})$ by:
      \[ \sigma_{n+1}:x\mapsto \left\{\begin{array}{cl} \sigma_n(x)& \mathrm{if}\; x\in \cup_{k=0}^n f^k(U)\\
      \xi\circ f^{-1}(x)+Tf\circ \sigma_n\circ f^{-1}(x)& \mathrm{if} \; x\in f^{n+1}(U)\setminus U\end{array}\right.\]
      We notice that $(\sigma_n)_n$ is well defined and is locally eventually constant on the open subset $U^+:= \cup_{k\ge 0} f^k(U)$. Thus $(\sigma_n)_n$ converges to some section $\sigma^+_0\in \chi(U^+)$. Also $\sigma^+$ satisfies:
      \[(\sigma^+_0\circ f-Tf\circ \sigma^+_0)_{|U^+}= \xi_{|U^+}.\]
      
       Let us define $(\sigma_n^+)_n\in \prod_{n\ge 0} \chi(f_{|f^{-n}(U^+)})$ by  induction.  Let $n\ge 0$ and suppose $\sigma_n^+$ constructed.
       For this end we notice that the forward orbit $O^+(\Sigma)$ of the singularities of $f$ is closed and that on $M'':=M'\setminus O^+(\Sigma)$, the map $S:=x\mapsto \ker(T_xf)$ is a smooth section of the Grassmannian of $TM''$. Let $p$ be the orthogonal projection of $TM''$ onto $S$. Remember that $\sigma_0$ can be smoothly extended to $M'$. This implies that $\sigma_n$ can be smoothly extended to $M'$. Let $\sigma_n^s\in \chi(M')$ be a smooth extension of $p\circ \sigma_{n|M''}^+$ such that $\sigma_n^s(x)$ belongs to $S(x)$, for every $x\in M'$.
       We can now define inductively for $n\ge 0$:
       \[\sigma_{n+1}^+:= x\in f^{-n-1}(U^+)\mapsto \left\{\begin{array}{cl} 
       \sigma_n^+(x) & \mathrm{if} \; x\in  f^{-n}(U^+)\\
       (Tf^{-1}_{|S(x)^\bot}\big(\sigma_n^+\circ f(x)-\xi(x)\big) & \mathrm{if}\; x\in f^{-n-1}(x)\end{array}\right.\]
       
       We notice that $(\sigma_n^+)_n$ is well defined and eventually constant on $\hat U:= \cup_{n\ge 0} f^{-n}(U^+)$.  Thus $(\sigma_n^+)_n$ converges to some section $\hat \sigma\in \chi(\hat U)$. Also $\hat \sigma$ satisfies:
       \[(\hat \sigma\circ f-Tf\circ \hat \sigma)_{|\hat U}= \xi_{|\hat U}.\]
       
Let $U'$ be an open neighborhood of the singularities satisfying Property $(b)$ of the lemma, such that $U$ contains the closure of $U'$.       

       Let $\hat U':=\cup_{n\ge 0} f^{-n}(\cup_{k\ge 0} f^k(U'))$ and let $\breve U$ be the complement of $\hat U'$ in $M'$. We notice that $(\hat U, \breve U)$ is an open cover of $M'$. Thus to finish the proof of the lemma, it is sufficient to find a partition of the unity $(r,1-r)$ subordinate to this cover, which is $f$-invariant ($r\circ f=r)$ and to find a section $\breve \sigma\in \chi(\breve U)$ such that 
       \[(\breve \sigma\circ f-Tf\circ \breve \sigma)_{|\breve U}=\xi_{|\breve U}.\]
        Then we notice that $\sigma:= r\cdot \hat \sigma+(1-r)\cdot \breve \sigma$ satisfies the requested property.

       Let $V$ be a manifold with boundary such that    $A$ is the maximal invariant of $V$ ({\it i.e} $A=\cap_n f^n(V)$) and $f$ sends $V$ into its interior.
       
       Let us construct $r$.
       As we are here in the diffeomorphism case, the construction a partition of the unity $(r_1, 1-r_1)$ subordinate to the cover $(V\cap \hat U, V\cap \breve U)$ and $f$-invariant is classic. Then we define $r:= x\in M'\mapsto r\circ f^n(x)$, if $x\in f^{-n}(V)$ which is convenient for our purpose.
       
      Let us construct $\breve \sigma$. Let $D:= (V\setminus f(V))\cap \breve U$.  Let $\partial^+ D:= \partial D\cap  f(V)\cap \breve U$ and $\partial^- D= \partial D\cap int(V)^c\cap \breve U$, with $\partial D$ the boundary of $D$. On a neighborhood of $\partial^- D$ we define $\breve \sigma_+=0$ and on a neighborhood of $\partial^- D$ we define $\breve \sigma_-=Tf^{-1}\circ (\breve \sigma_+\circ f-\xi)$. Then we chose a section $\breve \sigma_0$ of $\chi^\infty(M)$ equal to $\breve \sigma_+=0$ on a neighborhood of $\partial^+ D$ and to  $\breve \sigma_-$ on a neighborhood of $\partial^- D$.  
       
      As for the construction of  $\hat \sigma$, we define then $\breve \sigma$ on $\breve U^+:= \cup_{n\ge 0} f^n(D)$ and finally on $\breve U= \cup_{n\ge 0} f^{n} (\breve U^+)$.     
\begin{flushright}
$\square$
\end{flushright}

{\footnotesize
\bibliographystyle{alpha}
\bibliography{references}}

\bigskip
Pierre Berger,\\
 Institute for Mathematical Sciences, Stony Brook University, Stony Brook, NY 11794-3660\\
\noindent\url{pierre.berger((at))normalesup.org}\\
\noindent \url{http://www.math.sunysb.edu/~berger/}
\end{document}